\newcommand{\slitN}{S^{N,\lambda}}
\def\P{\mathbb{P}}
\def\E{\mathbb{E}}
\def\chl{\mathcal{C}}
\def\im{\operatorname{Im}}
\def\rea{\operatorname{Re}}
\newtheorem{theorem}{Theorem}[section]
\newtheorem{proposition}[theorem]{Proposition}
\newtheorem{lemma}[theorem]{Lemma}
\newtheorem{corollary}[theorem]{Corollary}
\newtheorem{definition}[theorem]{Definition}
\newtheorem{remark}[theorem]{Remark}
\numberwithin{equation}{section}
\title{One-arm domination time in Cylindrical Hastings-Levitov$(0)$}
\author{Guanyi Chen$^1$}
	\address[Guanyi Chen]{School of Mathematical Sciences, Peking University, Beijing, China}
	\email{cgy7698@stu.pku.edu.cn}
	\thanks{$^1$School of Mathematical Sciences, Peking University, Beijing, China}
	\author{Eviatar B. Procaccia$^2$}
	\address[Eviatar B. Procaccia]{Faculty of Data and Decision Sciences, Technion - Israel Institute of Technology, Haifa, Israel}
	\email{eviatarp@technion.ac.il}
	\thanks{$^2$Faculty of Data and Decision Sciences, Technion - Israel Institute of Technology, Haifa, Israel}
	\author{Yuxuan Zong$^1$}
	\address[Yuxuan Zong]{School of Mathematical Sciences, Peking University, Beijing, China}
	\email{yxzong25@stu.pku.edu.cn}
\begin{document}
\begin{abstract}
The cylindrical Hastings-Levitov$(0)$ admits a single infinite connected tree (arm). For a cylinder of width $N$ and particles of size $\lambda$, {we consider the first time $\upsilon_{N, \lambda}$ after which only the unique infinite tree receives particles}. We prove that $\frac{cN^2}{\lambda^3} \le \E[\upsilon_{N, \lambda}]\le\frac{CN^2}{\lambda^3}$, and establish an exponential tail for $\upsilon_{N, \lambda}$. Moreover, we obtain an asymptotic bound to the expected total number of trees, and the last time a new tree emerges. 
\end{abstract} 
\maketitle
\tableofcontents
\section{Introduction}
Diffusion Limited Aggregation (DLA) is a set-valued stochastic process defined on a graph, where particles sequentially attach according to the harmonic measure, or the limiting distribution from infinity of the first hitting location of a set. Hastings and Levitov \cite{hastings1998laplacian} constructed an off-lattice version of {this aggregation mechanism} via a composition of conformal maps on the exterior of the unit disk $\mathbb{D}$. Consider the conformal map $\phi^{\Delta}:\mathbb{C}\setminus \mathbb{D}\mapsto\mathbb{C}\setminus (\mathbb{D}\cup [1,1+{\Delta}])$, normalized so that $\phi^{\Delta}(z)=cz+a+\frac{b}{z}+\cdots,~c>0$ as $|z|\rightarrow\infty$. For $\theta_k\sim\text{Uniform}(\partial \mathbb{D})$ i.i.d.\ and a sequence of particle sizes ${\Delta}_k$, define $\phi_k(z)=e^{i\theta_k}\phi^{{\Delta}_k}(e^{-i\theta_k}z)$. Hastings-Levitov($\alpha$) is the process given by $\Phi_n=\phi_1\circ\phi_2\circ\cdots\circ\phi_n$ with the choice of particle size ${\Delta}_{n}={\Delta}|\Phi'_{n-1}(\theta_n)|^{-\frac{\alpha}{2}}$. For an off-lattice version of DLA, it is common to choose $\alpha=2$, to obtain particles of approximately fixed size. {Since the accumulated conformal map locally rescales inserted particles, it normalizes each particle's size parameter according to the modulus of its derivative at the attachment point to avoid particle-size blow-up.} 

Recently, connections were shown between DLA {initiated} on a long line to the Stationary DLA \cite{procaccia2019stationary,procaccia2020stationary,mu2022scaling} and Stationary Hastings-Levitov \cite{berger2022growth,berger2025logarithmic,procaccia2021dimension}. An {important} feature of the Stationary Hastings-Levitov setting is that no particle size normalization is required to avoid {blow-up}, allowing the aggregate to be constructed using i.i.d. copies of the conformal slit map. 

DLA on a cylinder was widely studied in both the physics and mathematics literature \cite{benjamini2008diffusion,kol1998solution,marchetti2012stationary}. Two main questions that arise in {the} majority of DLA studies are the asymptotic growth rate, and the number of arms. {Most of} these questions remain resistant to rigorous treatment (see \cite{procaccia2020stabilization} for discussion and some rigorous results in a wedge). For DLA in the cylinder $\mathbb{Z}/N \mathbb{Z}\times \mathbb{Z}$, it is easy to prove that only a single infinite arm exists, but it is hard to evaluate the asymptotic growth rate or the one-arm domination time, as a function of $N\to\infty$. Indeed, at any given time $t>0$, {with  probability  $\left(\frac{1}{N}\right)^N$},  the next $N$ particles will connect into a horizontal line of {one unit above the current} maximal height at time $t$ (see Figure \ref{fig:dla_one_arm}).

\tikzset{every picture/.style={line width=0.75pt}} 

\begin{figure}\center
	\begin{tikzpicture}[yscale=0.5,xscale=0.5]
		\draw[step=1cm,blue,thin, dotted] (0,0) grid (5,6);
            \node at (1,0) [circle,fill=black,inner sep=1pt]{}; \draw [black,very thick] (1,0) -- (1,1);
            \node at (1,1) [circle,fill=black,inner sep=1pt]{};
            \draw [black,very thick] (1,1) -- (1,2);
            \node at (1,2) [circle,fill=black,inner sep=1pt]{};
            \draw [black,very thick] (1,2) -- (1,3);
            \node at (1,3) [circle,fill=black,inner sep=1pt]{};
            \draw [black,very thick] (1,2) -- (2,2);
            \node at (2,2) [circle,fill=black,inner sep=1pt]{};
                \draw [black,very thick] (2,2) -- (2,3);
            \node at (2,3) [circle,fill=black,inner sep=1pt]{};
            
            \node at (2,0) [circle,fill=black,inner sep=1pt]{};
             \draw [black,very thick] (2,0) -- (2,1);
             \node at (2,1) [circle,fill=black,inner sep=1pt]{};
              \draw [black,very thick] (2,1) -- (3,1);
             \node at (3,1) [circle,fill=black,inner sep=1pt]{};
             
            \node at (4,0) [circle,fill=black,inner sep=1pt]{};
            \node at (4,1) [circle,fill=black,inner sep=1pt]{};
            \draw [black,very thick] (4,0) -- (4,1);
            \node at (4,1) [circle,fill=black,inner sep=1pt]{};
            \draw [black,very thick] (4,1) -- (4,2);
            \node at (4,2) [circle,fill=black,inner sep=1pt]{};
            \draw [black,very thick] (4,2) -- (5,2);
            \node at (5,2) [circle,fill=black,inner sep=1pt]{};
            \draw [black,very thick] (4,2) -- (4,3);
            \node at (4,3) [circle,fill=black,inner sep=1pt]{};
            \draw [black,very thick] (4,3) -- (4,4);
            \node at (4,4) [circle,fill=black,inner sep=1pt]{};
            \draw [black,very thick] (4,4) -- (5,4);
            \node at (5,4) [circle,fill=black,inner sep=1pt]{};
            \draw [black,very thick] (0,4) -- (1,4);
            \node at (1,4) [circle,fill=black,inner sep=1pt]{};
            \draw [black,very thick] (1,4) -- (2,4);
            \node at (2,4) [circle,fill=black,inner sep=1pt]{};
            \draw [black,very thick] (2,4) -- (3,4);
            \node at (3,4) [circle,fill=black,inner sep=1pt]{};
            
	\end{tikzpicture}
	\caption{ One-arm domination event for DLA in a cylinder.} \label{fig:dla_one_arm}
\end{figure}

In an off-lattice version called Cylindrical Hastings-Levitov$(0)$ (CHL), the growth rate was established in \cite{procaccia2023cylindrical}: For CHL with particles of size $\lambda$, on a cylinder of width $N$, after $tN$ particles, on average the height is $\frac{\pi}{2}\lambda^2 t$. It was established in \cite{norris2012hastings} that only a single tree persists to grow ad infinitum. {We call the first time after which only the unique infinite tree receives particles
the one-arm domination time, and denote it $\upsilon_{N, \lambda}$.} (see Figure \ref{fig:chlsimulation} for computer simulations of CHL and DLA on a cylinder, {shown at times that are, with high probability, after the one-arm domination time}). 
\begin{figure}[H]
    \centering
    \includegraphics[width=0.3\linewidth]{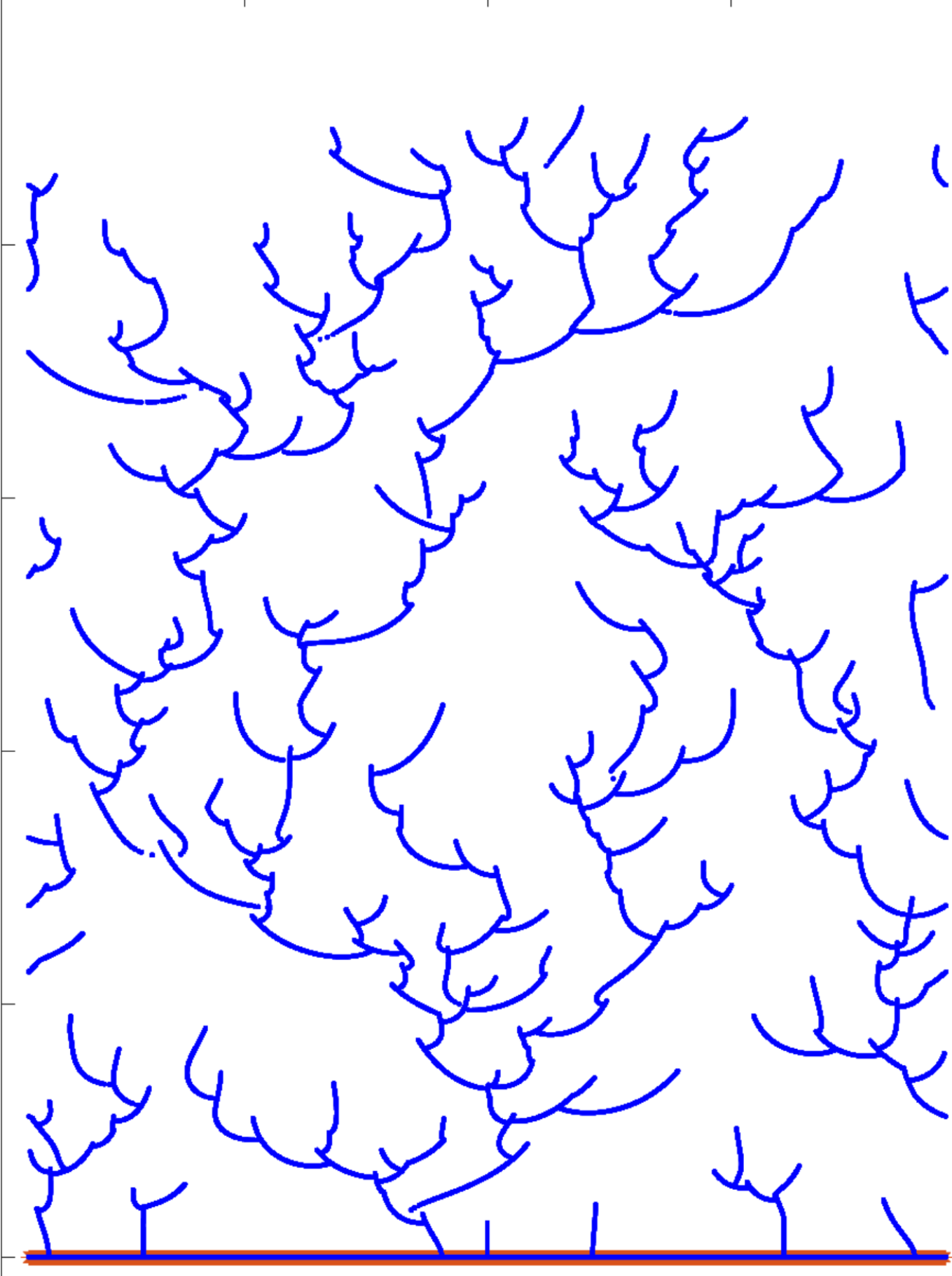}\hskip 1cm
    \includegraphics[width=0.3\linewidth]{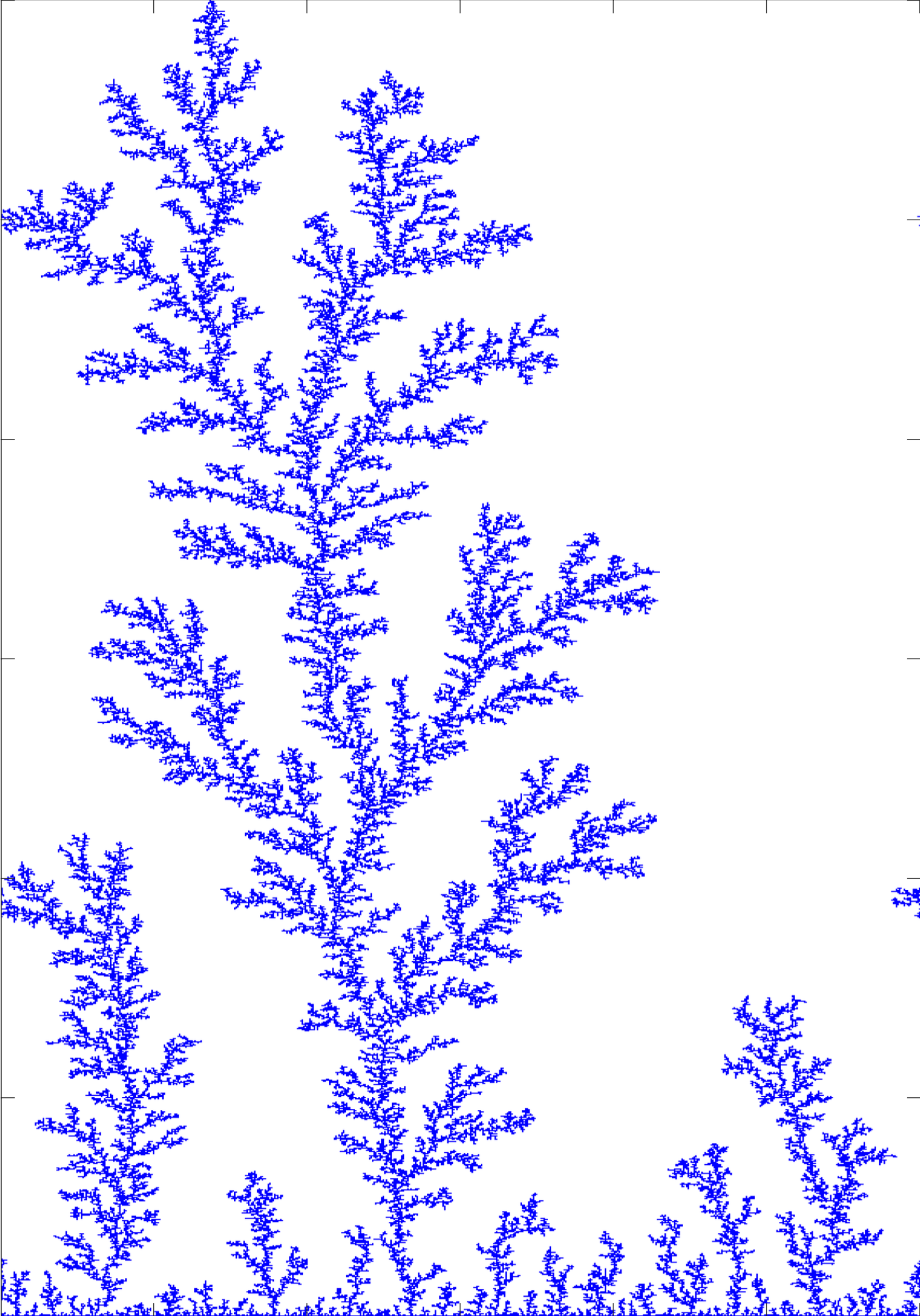}
    \caption{{Illustrative computer simulations:} CHL (left) and DLA on the cylinder (right).}
    \label{fig:chlsimulation}
\end{figure}
Note that though no rigorous connection has been established yet between the cylindrical DLA and CHL, the CHL in itself is an interesting and rich mathematical model, that resembles natural growth processes.

In this paper we study the one-arm domination time $\upsilon_{N, \lambda}$ of CHL. {
We  prove that $\E[\upsilon_{N, \lambda}]\asymp\frac{N^2}{\lambda^3}$, and give an exponential tail for $\upsilon_{N, \lambda}$. The coalescing Brownian flow limit of ~\cite{norris2012hastings} is an important guide for the relevant coalescence scale, but their convergence result is formulated for the harmonic measure flow in a weak-flow topology, whereas the one-arm domination time is not a continuous functional of that topology. Therefore, we develop some techniques that allow us to describe the current state of CHL by intervals on the boundary of a circle, and track how the intervals evolve through time to deal with the observable $\upsilon_{N,\lambda}$. This also allows us to obtain tail bounds for $\upsilon_{N, \lambda}$.
}

\subsection{Definitions}\label{SS:Definitions}
We recall the definition of CHL appearing in \cite{procaccia2023cylindrical}. Define the circle
\begin{align*}
	\mathbb T_N = \mathbb{R} / x \sim x + 2\pi N=[0, 2\pi N), 
\end{align*}
the cylinder
\begin{align}\label{cylinder}
    \mathbb T^N = \{z \in \mathbb{C} : \im(z) > 0\} / z \sim z + 2\pi N, 
\end{align}
the complement of the unit disk
$
\mathbb D_0=\{z\in \mathbb C: |z|>1\},
$
and the upper {half-plane} $\mathbb{H} = \{z \in \mathbb{C} : \im(z) > 0\}$. For two complex numbers $z, w \in \mathbb{C}$, denote $[z, w] = \{tz + (1 - t)w : t \in [0, 1]\}$. 

{Next, suppose $\lambda > 0$ is given, which is the size of one particle in our setting. Then, we construct the cylindrical slit map ``centered" at $0$, which maps $\mathbb T^N$ to $\mathbb T^N \setminus [0, i\lambda]$ and maps $0$ to $i\lambda$. We take }
{
\begin{align}\label{eq-def-delta}
\delta =\delta(N, \lambda) := 1-\frac{2}{e^{\lambda/N}+1}\overset{\text{for large}\ N}{=}\frac{\lambda}{2N}+O\left(\frac{\lambda}{N}\right)^3,
\end{align}
then, we define the cylindrical slit map centered at $0$ by
$$
S^{N,\lambda}(z) = f_N^{-1} \circ g^{-1} \circ \phi_{\delta(N,\lambda)} \circ g \circ f_N(z)\,, 
$$
where
\[
\begin{aligned}
f_N:\mathbb T^N \to \mathbb D_0
&\quad f_N(z)=e^{-iz/N},
&\quad f_N^{-1}:\mathbb D_0 \to \mathbb T^N
&\quad f_N^{-1}(z)=iN\log z,\\
g:\mathbb D_0 \to \mathbb H
&\quad g(z)=i\frac{z-1}{z+1},
&\quad g^{-1}:\mathbb H \to \mathbb D_0
&\quad g^{-1}(z)=\frac{i+z}{i-z},\\
\phi_a:\mathbb H \to \mathbb H\setminus[0,ia]
&\quad \phi_a(z)=\sqrt{z^2(1-a^2)-a^2},&\mbox{ where }a>0.
&
\end{aligned}
\]}
\begin{figure}[H]
\begin{center}
	
	\definecolor{myred}{RGB}{255, 0, 0}
	\definecolor{myblue}{RGB}{0, 0, 255}
	
	\tikzset{
		redcirc/.style={circle, draw=black, fill=myred, thin, inner sep=0pt, minimum size=3mm},
		bluecirc/.style={circle, draw=black, fill=myblue, thin, inner sep=0pt, minimum size=5mm},
	}
	
	\begin{tikzpicture}[scale=0.75]
		
		
		\node (v1) at (-1,-0.3) {\tiny{$-N\pi$}};
		\node (v2) at (1,-0.3) {\tiny{$N\pi$}};
		\draw [ultra thick] (-1,0) to (1,0);
		\draw[pattern=north west lines, pattern color=blue, opacity=0.2] (-1,0) rectangle (1,3);
		\node (v4) at (0,0) [redcirc] {\tiny{$0$}};
		
		\node (v3) at (1.5,1.5) {\tiny{$\overset{f_N}{\to}$}};
		
		\draw[pattern=north west lines, pattern color=blue, opacity=0.2] (1.8,0) rectangle (4.2,3);
		\filldraw[color=black!160, fill=white!5, ultra thick](3,1.5) circle (1);
		\node (v6) at (4,1.5) [redcirc] {\tiny{$1$}};
		
		\node (v5) at (4.6,1.5) {\tiny{$\overset{g}{\to}$}};
		
		\draw[pattern=north west lines, pattern color=blue, opacity=0.2] (5,0) rectangle (8,2);
		\draw [ultra thick] (5,0) to (8,0);
		\node (v7) at (6.5,0) [redcirc] {\tiny{$0$}};
		
		\node (v8) at (8.3,1.5) {\tiny{$\overset{\phi_\delta}{\to}$}};
		
		\draw[pattern=north west lines, pattern color=blue, opacity=0.2] (8.7,0) rectangle (11.7,2);
		\draw [ultra thick] (8.7,0) to (11.7,0);
		\draw [ultra thick] (10.2,0) to (10.2,0.7);
		\node (v8) at (10.2,0.7) [redcirc] {\tiny{$i\delta$}};
		
		
		\node (v8) at (10.2,-0.5) {\tiny{${\downarrow}{g^{-1}}$}};
		
		\draw[pattern=north west lines, pattern color=blue, opacity=0.2] (8.7,-4) rectangle (11.7,-1);
		\filldraw[color=black!160, fill=white!5, ultra thick](10,-2.5) circle (1);
		\draw [ultra thick] (11,-2.5) to (11.4,-2.5);
		\node (v9) at (11.4,-2.5) [redcirc] {};
		\node (v10) at (11.4,-2.1) {\tiny{$\frac{1+\delta}{1-\delta}$}};
		
		\node (v8) at (7.9,-2.5) {\tiny{$\overset{f_N^{-1}}{\leftarrow}$}};
		
		\begin{scope}[shift={(6,-4)}]
			\node (v11) at (-1,-0.3) {\tiny{$-N\pi$}};
			\node (v12) at (1,-0.3) {\tiny{$N\pi$}};
			\draw [ultra thick] (-1,0) to (1,0);
			\draw[pattern=north west lines, pattern color=blue, opacity=0.2] (-1,0) rectangle (1,3);
			\draw [ultra thick] (0,0) to (0,1.3);
			\node (v14) at (0,1.3) [redcirc] {\tiny{$i \lambda$}};
		\end{scope}
		
		\draw [-stealth](0,-0.5) -- (4.5,-3);
		\node (v15) at (2.7,-1.5) {$\slitN$};
	\end{tikzpicture}
\end{center}
	\caption{The cylinder slit map $S^{N,\lambda}(z):\mathbb T^N\to\mathbb T^N\setminus[0,i\lambda]$}
	\label{basic_slit_map}
\end{figure}
To {verify that} $S^{N,\lambda}(0) = i\lambda$, {it suffices to note that}
\begin{equation}\label{map}
	{0 \xrightarrow{f_N} 1 \xrightarrow{g} 0 \xrightarrow{\phi_\delta} i\delta \xrightarrow{g^{-1}} \frac{1 + \delta}{1 - \delta} \xrightarrow{f_N^{-1}} iN \log\left(\frac{1 + \delta}{1 - \delta}\right) = i\lambda\,.}
\end{equation}
{Similarly, we can verify that $S^{N,\lambda}(\mathbb T_N) = \mathbb T_N \cup [0, i\lambda]$.}

{Next, given the above definition of a slit map centered at $0$, we define a slit map centered at general position.} The slit map $S^{N,\lambda}_x$ {centered at $x \in \mathbb T_N$ with particle size $\lambda$ is defined by} 
\begin{equation}
S^{N,\lambda}_x(z) = \rea(z) + S^{N,\lambda}(z - x) - (\rea(z - x) \mod 2\pi N),\quad z\in\mathbb{T}^N.
\label{eq:identity}
\end{equation}
{In particular,} $S^{N,\lambda}_0(z)=S^{N,\lambda}(z)$. 

{Given the general definition of slit maps, we define the CHL process on $\mathbb T^N$ as follows. }
 
\begin{definition}[$\operatorname{CHL}_N$]
Consider a Poisson point process $P$ with intensity 1 on $\mathbb R_+\times \mathbb T_N $. Let $I_{t,N}$ be the set of points in $P$ distributed on $[0, t]\times \mathbb T_N $. Almost surely, $P$ has finitely many points in any compact set. Hence we can write 
\[
    I_{t, N} = \{(t_1, x_1), (t_2, x_2), \ldots, (t_n, x_n)\}
\]
such that $0 < t_1 < t_2 < \cdots < t_n \leq t$ and $\forall i, 0 < x_i \leq 2\pi N$. $\operatorname{CHL}_N$ is the c\'adl\'ag function $\chl^{N,\lambda}_t(z)$ such that
$$
{\chl^{N,\lambda}_t(z) =S^{N,\lambda}_{x_1} \circ S^{N,\lambda}_{x_2} \circ \cdots \circ S^{N,\lambda}_{x_n}(z)}
$$
and we denote $\{\mathcal C^{N,\lambda}_t(\cdot)\}_{t \geq 0} \lhd P$ in this case. The backward $\operatorname{CHL}_N$ process is defined by
\begin{equation}
{\tilde{\chl}^{N,\lambda}_t(z) =S^{N,\lambda}_{x_n} \circ S^{N,\lambda}_{x_{n-1}} \circ \cdots \circ S^{N,\lambda}_{x_1}(z).}
\label{eq:backwardchln}
\end{equation}
\end{definition}
Note that the {backward} process is not Markovian. However, for any fixed time, it is equidistributed as the original process (see Section 2 in \cite{berger2022growth}) and admits more tractable analysis.  

\begin{definition}[Trees]\label{def:tree}
    We call the connected components of 
    $$\chl^{N,\lambda}_t( \mathbb T_N \times \{0\})\setminus (\mathbb T_N \times \{0\})$$
    the trees of CHL$_N$, and denote the number of trees at time $t$ as $\mathcal{N}_{t}$. 
    By \cite{norris2012hastings} there is a single tree that persists in growing ad infinitum { (we include a self-contained proof in Proposition \ref{geometric properties of CHL})}.
    
    {Define the one-arm domination time $\upsilon_{N, \lambda}$ as the minimal time $\upsilon$ such that the infinite tree receives a particle at time $\upsilon$, and no tree other than the infinite tree receives a particle after $\upsilon$. Also, define the tree completion time $\omega_{N, \lambda}$ as the last time a new tree grows on $\mathbb T_N$.}
\end{definition}

\subsection{Main results} 
{In this section we state the main results. The first main result of this paper establishes that the expectation of the one-arm domination time is of order $\lambda^{-3}N^2\,$.}
\begin{theorem}\label{thm: one-arm domination time}
    {There exist universal constants $c,C>0$ such that for every fixed $\lambda>0$, there exists $N_0(\lambda)<\infty$ such that for all $N\ge N_0(\lambda)$, the one-arm domination time $\upsilon_{N,\lambda}$ in $\mathrm{CHL}_N$ satisfies} 
    \[
        \frac{cN^2}{\lambda^3}\le \mathbb{E}[\upsilon_{N, \lambda}]\le \frac{CN^2}{\lambda^3}.
    \]
\end{theorem}
Furthermore, we obtain an exponential tail for the one-arm domination time.
\begin{theorem}\label{thm:tail of one-arm domination time}
    {There exist universal constants $C_1,C_2,C_3,C_4>0$ such that for every $\lambda>0$, there exists $N_0(\lambda)<\infty$ such that for all $N\ge N_0(\lambda)$ and all $t>0$,} 
     \begin{align*}
         C_3\text{e}^{-C_4t}\le \mathbb{P}\left(\upsilon_{N, \lambda}>\lambda^{-3}N^2t\right)\le C_1\text{e}^{-C_2t}.
     \end{align*}
\end{theorem}

For the tree completion time $\omega_{N, \lambda}$, we also obtain an upper bound to its expectation.
\begin{theorem}\label{thm: tree completion time}
    {For every fixed $\lambda>0$, $\varepsilon>0$ and  all sufficiently large $N$,} 
    \begin{align*}
        {\mathbb{E}[\omega_{N, \lambda}]\le (1+\varepsilon)\frac{\log N}{2\lambda}.}
    \end{align*}
\end{theorem}
\begin{remark}
    We can directly obtain the following lower bound result. There exists $C_0>0$ such that
    $\mathbb{E}[\omega_{N, \lambda}]>C_0$. We conjecture that 
    \begin{align*}
        {\lim_{N\to \infty}\frac{\mathbb{E}[\omega_{N, \lambda}]}{\log N}=\frac{1}{2\lambda}.}
    \end{align*}
\end{remark}

Finally, we compute the asymptotics of the expected number of trees when $N$ tends to infinity. {Recall from Definition \ref{def:tree} that} $\mathcal{N}_{t}$ denotes the number of trees at time $t$ in $\operatorname{CHL}_N$. We can obtain the following result.
\begin{theorem}\label{thm: number of trees}
    There exists $\mathcal{N}_{\infty}$ such that
    \begin{align*}
        \mathcal{N}_{t} \to \mathcal{N}_{\infty} \quad \mathrm{a.s.}
    \end{align*}
    as $t \to \infty$, and 
    \begin{align*}
        \mathbb{E}[\mathcal{N}_{\infty}]= \frac{\pi}{2 \arctan\frac{\delta(N,\lambda)}{\sqrt{1-\delta(N,\lambda)^2}}}
    \end{align*}
    (See \eqref{eq-def-delta} for the definition of $\delta(N,\lambda)$). In particular, 
    \begin{align*}
        \lim_{N\to \infty}\frac{\mathbb{E}[\mathcal{N}_{\infty}] }{N }=\frac{\pi}{\lambda}.
    \end{align*}
\end{theorem}
{A direct corollary gives the expected number of children of a single particle.} 
\begin{corollary}\label{cor:degree of a single particle}
    {Assume that a particle is attached at time $t_0$. Let $D_t$ denote the number of particles directly attached to this particle during $(t_0,t]$. Then there exists $D_{\infty}$ such that}
    \begin{align*}
        {D_{t} \to D_{\infty} \quad \mathrm{a.s.}}
    \end{align*}
    {as $t \to \infty$, and}
    \begin{align*}
        {\mathbb{E}[D_{\infty}]= 1.}
    \end{align*}

\end{corollary}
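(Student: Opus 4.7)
My plan is to prove the corollary directly from Theorem \ref{thm: number of trees} by combining a mass-balance identity on parent--child edges with translation invariance of CHL and Palm calculus for the driving Poisson process. The argument splits into the almost-sure convergence of $N_t$ and the evaluation of its expectation.

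For the almost-sure convergence: $N_t$ is a non-decreasing integer-valued counting process. A Poisson point $(s,x)\in P$ creates a particle attached to $p$ iff $x\in(\chl^{N,\delta}_{s^-})^{-1}(p)\subset T_N$, and since every subsequent slit only shields $p$ from the base, the preimage length $|(\chl^{N,\delta}_{s^-})^{-1}(p)|$ is non-increasing in $s$. Combining this monotonicity with Proposition \ref{geometric properties of CHL} (every finite tree eventually stops receiving particles), one obtains $\int_{t_0}^{\infty} |(\chl^{N,\delta}_s)^{-1}(p)|\,ds<\infty$ almost surely, whence $N_t\to N_\infty<\infty$ almost surely.

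For the expectation: every non-root particle has a unique parent particle, so at any time $T$
\[
\sum_{p:\tau(p)\le T}D_T(p)=K_T-\mathcal{N}_T,
\]
where $D_T(p)$ denotes the number of direct children of $p$ by time $T$, $K_T\sim\mathrm{Poisson}(2\pi NT)$, and $\mathcal{N}_T$ is the number of trees at time $T$. By the Slivnyak--Mecke formula for the driving Poisson process together with $x$-translation invariance of CHL, taking expectations rewrites the left-hand side as $2\pi N\int_0^T g(t,T)\,dt$, where $g(t,T)$ is the Palm expectation of the degree at time $T$ of a particle inserted at time $t$. Since $\mathbb{E}[\mathcal{N}_T]\le\mathbb{E}[\mathcal{N}_\infty]<\infty$ by Theorem \ref{thm: number of trees}, dividing by $2\pi NT$ and letting $T\to\infty$ yields
\[
\lim_{T\to\infty}\frac{1}{T}\int_0^T g(t,T)\,dt=1.
\]
Interpreting $\mathbb{E}[N_\infty]$ as a Palm expectation (i.e., averaging over the insertion location in space-time), this equals $\mathbb{E}[N_\infty]$ after passing the monotone limit $g(t,T)\nearrow g(t,\infty)$.

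The main obstacle will be that $g(t,\infty)$ could a priori depend on $t$, since the random aggregate $\chl^{N,\delta}_t$ has a nontrivial distribution varying with $t$; the cleanest resolution is to read the corollary in the Palm sense, so that $\mathbb{E}[N_\infty]$ is already the space-time average produced by the Slivnyak--Mecke identity. The exchange of limit and expectation is then handled by monotone convergence $D_T(p)\nearrow N_\infty$ together with the a priori bound $\mathbb{E}[N_\infty]\le 1$ from the counting identity applied with finite $T$, and the finiteness input $\mathbb{E}[\mathcal{N}_\infty]<\infty$ from Theorem \ref{thm: number of trees} is what forces the Cesaro limit to be exactly $1$.
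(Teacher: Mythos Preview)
Your counting-identity approach is a genuine alternative to the paper's argument, but it has a real gap at exactly the point you flag.

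The paper proceeds more directly. It observes that immediately after the particle $p$ is attached, its preimage on $\mathbb T_1$ is an interval of deterministic length $4\arctan\tfrac{\delta}{\sqrt{1-\delta^2}}$ (by \eqref{eq-[x-2arc,x+2arc]}), and that by the Markov property the subsequent slits are again i.i.d.\ uniform on $\mathbb T_1$. Hence the number of children of $p$ is equidistributed with the number of trees rooted in a fixed interval of that length in a fresh CHL. Writing $T(x)$ for the latter expectation, additivity in $x$ forces $T$ to be linear; since $T(2\pi)=\mathbb E[\mathcal N_\infty]=\tfrac{\pi}{2\arctan(\delta/\sqrt{1-\delta^2})}$ by Theorem~\ref{thm: number of trees}, one reads off $T(4\arctan\tfrac{\delta}{\sqrt{1-\delta^2}})=1$. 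Your route uses only the finiteness of $\mathbb E[\mathcal N_\infty]$, which is pleasant, but it needs Palm calculus and one extra step that you do not supply.

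Your mass-balance identity $\sum_{p}D_T(p)=K_T-\mathcal N_T$ together with Slivnyak--Mecke does give $\tfrac{1}{T}\int_0^T g(t,T)\,dt\to 1$. The gap is the passage from this Ces\`aro statement to $g(t_0,\infty)=1$ for the fixed $t_0$ in the corollary. Your proposed fix conflates two different objects: the Palm expectation at $(t_0,x_0)$ is $g(t_0,\infty)$, not the time average of $g(\cdot,\infty)$; ``averaging over the insertion location in space-time'' is not what Palm conditioning means. The actual resolution is short and is precisely the observation the paper's proof opens with: under the reduced Palm measure at $(t_0,x_0)$, Slivnyak gives a fresh Poisson process after $t_0$, and the preimage interval of the new particle has deterministic length $4\arctan\tfrac{\delta}{\sqrt{1-\delta^2}}$ regardless of $t_0$ and of the past aggregate. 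Thus $g(t,T)=h(T-t)$ for a single nondecreasing function $h$, your identity becomes $\tfrac{1}{T}\int_0^T h(s)\,ds = 1 - \tfrac{1}{2\pi N T}\mathbb E[\mathcal N_T]$, and monotone convergence of $h$ to its limit yields $h(\infty)=1$. Once you add this one line, your approach is complete; without it, the ``main obstacle'' you name is not addressed.

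A minor point on your a.s.\ finiteness argument: the set you should track is not $(\chl^{N,\delta}_{s^-})^{-1}(p)$ but the preimage of the \emph{exposed} part of $p$'s slit (with children's sub-intervals removed). Its length contracts deterministically by Lemma~\ref{lem-properties-mathcal-P}(1),(2) (or in expectation by Lemma~\ref{lem: recursion of inverse of tilde-S}), which already gives $\mathbb E[N_\infty]<\infty$; Proposition~\ref{geometric properties of CHL} is not needed here.
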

\subsection{Proof overview}
{Note that it can be shown that 
\begin{align*}
NS^{1,\lambda / N}_{x/N}\left(\frac{\cdot}{N}\right) = S^{N,\lambda}_x(\cdot)\,,\quad N\mathcal C^{1,\lambda / N}_{t}\left(\frac{\cdot}{N}\right) = \mathcal C^{N,\lambda}_t(\cdot)\,,
\end{align*}
and
\begin{align*}
\delta\left(1,\frac{\lambda}{N}\right) = \delta(N,\lambda)\,.
\end{align*}
Therefore, we can scale the cylinder $\mathbb T^N =\mathbb T_N \times \mathbb R_+$ to $\mathbb T^1 =\mathbb T_1 \times \mathbb R_+$, and work with $\bigl(1,\tfrac{\lambda}{N}\bigr)$ to simplify our analysis. Also, for simplicity of notations, in the rest of the article we abbreviate $\delta(N,\lambda) = \delta\bigl(1,\tfrac{\lambda}{N}\bigr)$ by $\delta$ when there is no ambiguity.
}

{To begin with, we consider the properties of the inverse of a slit map,} and investigate how an interval on the unit torus is influenced by performing an inverse slit map at a uniform position. Our investigation is motivated by the following observation: For a subset $\mathcal T$ of $\mathcal C^{1,\lambda/N}_{t_n}(\{ 0\} \times \mathbb T_1)$ in our rescaled CHL ($\mathcal T$ could be a tree or a subinterval of $\{ 0\} \times \mathbb T_1$, for example), the probability that the $(n+1)$-th particle hits $\mathcal T$ equals $$\frac{1}{|\mathbb T_1|}|(\mathcal C^{1,\lambda/N}_{t_{n}})^{-1}(\mathcal T)|,$$ which becomes $$\frac{1}{|\mathbb T_1|}|(S^{1,\lambda/N}_{x_{n+1}})^{-1} \circ (\mathcal C^{1,\lambda/N}_{t_{n}})^{-1}(\mathcal T)| = \frac{1}{|\mathbb T_1|}|(\mathcal C^{1,\lambda/N}_{t_{n+1}})^{-1}(\mathcal T)|$$ after attaching a new particle $x_{n+1}$ at time $t_{n+1}$ in our rescaled CHL. Therefore, our investigation into change of intervals on the torus by a random inverse slit map helps us to derive useful properties of the evolution of $\operatorname{CHL}_N$. {To characterize the hitting probability of all trees on a given interval $I$ at time $t$, it suffices to pull back the interval by the standard CHL map \(C_t\) and get the inverse map of $I$ as an arc interval, which we will denote by $\mathcal (C^{1,\lambda / N}_t)^{-1}(I)$ in the following context. We will call this method the \textit{interval characterization of $\operatorname{CHL}_N$} in the following context.}

In Section \ref{sec:Expectation of one-arm domination time}, we begin by dealing with the lower bound for Theorem~\ref{thm: one-arm domination time} {by the interval characterization of $\operatorname{CHL}_N$}. Intuitively, if there exists a dominating tree $\mathcal T$, i.e. a tree {that} captures most of the harmonic measure, then the probability that $\mathcal T$ is hit by a new random particle, namely $\tfrac{1}{|\mathbb T_1|}|(\mathcal C^{1,\lambda/N}_{t_{n}})^{-1}(\mathcal T)|$, should be very close to 1. {Suppose} we divide $\mathbb T_1$ into two semicircular arc intervals $I$ and $I'$, and the domination of a tree $\mathcal T$ happens after the $n$th particle arrives{. Then,} at least one of $(\mathcal C^{1,\lambda/N}_{t_{n}})^{-1}(I)$ and $(\mathcal C^{1,\lambda/N}_{t_{n}})^{-1}(I')$ should contain $(\mathcal C^{1,\lambda/N}_{t_{n}})^{-1}(\mathcal T)$ {and occupies almost all of} $\mathbb T_1$. Therefore, $\E[\upsilon_{N,\lambda}]$ is bounded below by the expectation of the time when one of $\tfrac{1}{|\mathbb T_1|}|(\mathcal C^{1,\lambda/N}_{t_{n}})^{-1}(I)|$ and $\tfrac{1}{|\mathbb T_1|}|(\mathcal C^{1,\lambda/N}_{t_{n}})^{-1}(I')|$ becomes close to 1, which can be estimated by analogy {with} the hitting time of a simple random walk.

We then proceed to deal with the upper bound for Theorem~\ref{thm: one-arm domination time}, where {we need to apply the interval characterization of $\operatorname{CHL}_N$ in a more sophisticated way. The sketch of our proof is as follows.} When the first particle is attached to $\mathbb T_1$, we choose an interval $I^{(1)}$ of length $|I^{(1)}| = \delta $ on $\mathbb T_1$ such that a unique tree emanates from $I^{(1)}$, {which we will show is always possible.} Then we perform i.i.d.\ random inverse slit maps $(S_x^{1,\lambda/N})^{-1}$ where $x \sim \mathsf{Unif}(\mathbb T_1)$, and update $I^{(1)}$ by $(S_x^{1,\lambda/N})^{-1}(I^{(1)})$ each time. We repeat this process until either $|I^{(1)}| < \delta^3$ or $|I^{(1)}| > 2\pi - \delta^3$ after $T$ mappings, where we will stop this process and rerun the same process for newly constructed $I^{(2)}\,,I^{(3)}\,,\ldots\,.$ {By Lemma~\ref{lem-harmonic-estimations-quadratic}, we have
$\mathbb{E}[T]\asymp \delta^{-2}$.}

Next, by the martingale property of $|I^{(1)}|$, the probability that $|I^{(1)}| > 2\pi - \delta^3$ happens before $|I^{(1)}| < \delta^3$ is $O(\delta)$. If $|I^{(1)}| > 2\pi - \delta^3$, then with probability {$1 - o(1)$}, all new particles later are attached to $I^{(1)}$, which yields a one-arm domination {by our interval characterization of $\operatorname{CHL}_N$}. {Therefore, if we define $\tau$ by the minimal index such that $|I^{(\tau)}| > 2\pi - \delta^3$ happens before $|I^{(\tau)}| < \delta^3$, then the one-arm domination time $\upsilon_{N, \lambda}$ of $\operatorname{CHL}_N$ has an upper bound
\[
T'_N = 1 + \sum_{j=1}^{\tau} T_j\,.
\]}

Since $\tau$ is {stochastically} dominated by the first success time of an independent series of Bernoulli trials with parameter $O(\delta)$, we have $\mathbb{E}[\tau] \asymp \delta^{-1}$. 
{Therefore, we have}
\[
\mathbb{E}[T'_N] \leq 1+\E[T]\E[\tau] \asymp 1+O(\delta^{-1})O(\delta^{-2}) = O(\delta^{-3})\,.
\]

{Also, by the fact that the total arrival rate in the original cylinder is $2\pi N$, the expected time for $O(\delta^{-3})$ particles to arrive is $O(\delta^{-3}/N)=O(N^2/\lambda^3)$ by \eqref{eq-def-delta}.} This completes our analysis of the upper bound for Theorem~\ref{thm: one-arm domination time}.

In Section \ref{sec:Tail of one-arm domination time}, we establish an exponential tail for one-arm domination time by {the} strong Markov property. 
Intuitively, it suffices to show that for sufficiently small $c$ {with high probability,}  $c\delta^{-3}$ {random slit maps} are not enough, and for sufficiently large $C$ {with high probability,}  $C\delta^{-3}$ {random slit maps} are enough. {Again, we will use the interval characterization of $\operatorname{CHL}_N$.}

For the lower bound, the crucial argument is that for any interval of length between $\tfrac{2\pi}{3}$ and $\tfrac{4\pi}{3}$, there is positive probability bounded away from 0 such that its length remains between $\tfrac{2\pi}{3}$ and $\tfrac{4\pi}{3}$ after $c_*\delta^{-3}$ random {inverse slit maps}. Therefore, for trees captured by the semicircular interval $[0 , \pi]$, {their total hitting probability is neither too small nor too large} in the first $C\delta^{-3}$ mappings,  {which rules out one-arm domination during this period}. Repeating such arguments yields an exponentially decaying probability, and in this way we derive our lower bound. For the upper bound, we continue the method in Section~\ref{upper bound}. By Markov's inequality, there is positive probability bounded away from $1$ such that within $C^*\delta^{-3}$ mappings, all marked intervals have length no more than $2\pi-\delta^3$. Using strong Markov property, the probability that all marked intervals have length no more than $2\pi-\delta^3$ within $C^{**}C^*\delta^{-3}$ mappings is bounded exponentially in $C^{**}$, which yields the upper bound. 

In Section \ref{sec:Expectation of tree completion time}, we get an upper bound of the expected tree completion time, {which can be dealt with by investigating the total length of intervals without trees in the interval characterization of $\operatorname{CHL}_N$}. We first derive a recursive formula for the expectation of the total length of non-tree intervals. {Then, since the expectation is summable after $(1+\varepsilon)\frac{\pi}{\lambda}N\log N$ particle arrivals, and the total arrival rate is $2\pi N$, the tree completion time is bounded by $(1+\varepsilon)\frac{\log N}{2\lambda}$.}

In Section \ref{sec:Expected number of trees}, we obtain the asymptotics of the expected number of trees. For fixed $N$, we prove the existence of the {almost surely} limit of the number of trees in the $\operatorname{CHL}_N$ process. Then, we substitute this limit into a recursive  formula for the expectation, and obtain the asymptotic density. As a corollary, we also get the asymptotics of the expected number of children of a single particle  using the same analysis. 

\subsection{Organization of the paper}
In Section \ref{sec:Estimate of Interval variation}, with some properties of the inverse of a slit map, we get the estimate of interval variation. In Section \ref{sec:Expectation of one-arm domination time}, we use the estimate in Section \ref{sec:Estimate of Interval variation} to prove Theorem \ref{thm: one-arm domination time}. In Section \ref{sec:Tail of one-arm domination time}, based on the proof of Theorem \ref{thm: one-arm domination time}, we use the estimate in Section \ref{sec:Estimate of Interval variation} to prove Theorem \ref{thm:tail of one-arm domination time}. In Section \ref{sec:Expectation of tree completion time}, we continue to use the ``marked configurations" to prove Theorem \ref{thm: tree completion time}. In Section \ref{sec:Expected number of trees}, we prove Theorem \ref{thm: number of trees} and use it to prove Corollary \ref{cor:degree of a single particle}. 


\section{Estimate of interval variation}\label{sec:Estimate of Interval variation}
{This section develops the interval estimates used later in this paper. After rescaling the cylinder $\mathbb T^N$ to $\mathbb T^1$, we regard the inverse $\text{CHL}_N$ process as a Markov jump process on boundary intervals driven by the broadened inverse slit map. We establish the martingale property of interval lengths, one-step estimates,  the second moment bound and exit-time bounds.}
\subsection{Conventions and Preliminaries}
We consider the boundary pull-back process associated with the standard $\text{CHL}_N$ as a Markov jump process. Denote $\mathbb T^N =\mathbb T_N \times \mathbb R_+$ for all $N \geq 1$, and denote by $\mathbf{PPP}_{\mathbb T^N}(1, x)$ the Poisson point process with intensity 1 on the time scale and intensity $x$ on the position scale on $\mathbb T^N$. To simplify our analysis, we rescale $\mathbb R_+ \times \mathbb T^N$ to $\mathbb R_+ \times \mathbb T^1$ by $(t, z) \longrightarrow (t, R_N(z))$, where $R_N(z) := z/N$. It can be verified that 
 \begin{equation}\label{rescale}
 R_N^{-1} \circ S^{1,\lambda/N}_{x/N}(\cdot) \circ R_N = S^{N,\lambda}_{x}(\cdot) \mbox{ for all }x \in \mathbb T_N,
 \end{equation}
therefore, there exists a coupling of the law of $\Big(\{\mathcal C^{N,\lambda}_t(\cdot)\}_{t \geq 0}\lhd P\sim \mathbf{PPP}_{\mathbb T^N}(1, 1)\Big)$ and the law of $\Big(\{{\mathcal C}^{1,\lambda/N}_t(\cdot)\}_{t \geq 0}\lhd \widetilde P\sim \mathbf{PPP}_{\mathbb T^1}(1, N)\Big)$ by
\begin{equation*}
R_N^{-1}\circ {\mathcal C}^{1,\lambda/N}_t(\cdot) \circ R_N = \mathcal C^{N,\lambda}_t(\cdot) \,,\quad \widetilde P = R_N(P)
\end{equation*}
such that $\mathbb{P} [(\{\mathcal C^{N,\lambda}_t(\cdot)\}_{t \geq 0}\,, P) = (\{{\mathcal C}^{1,\lambda/N}_t(\cdot)\}_{t \geq 0}\,, \widetilde P)] = 1$. Therefore, it suffices to consider the rescaled $\text{CHL}_N$ : $(\{{\mathcal C}^{1,\lambda/N}_t(\cdot)\}_{t \geq 0})$ driven by the Poisson point process $\tilde{P} \sim \mathsf{id} \times R_N (\mathbf{PPP}_{\mathbb T^N}(1, 1) ) \overset{d}{=} \mathbf{PPP}_{\mathbb T^1}(1, N)$. 

{We write $k$ for the attached particle index, $x_k$ the spatial position of the $k$-th arriving particle, and $t_k$ for the $k$-th Poisson arrival time. Then, the Poisson points driving the rescaled $\text{CHL}_N$ are $\{ (t_j, x_j)\}_{j \geq 1}$ with $\{ t_j\}_{j\geq 1}$ increasing. Also, $\{t_k\}_{k\geq 1}$ is itself a Poisson point process with intensity $2\pi N$ and is independent of  $\{x_k\}_{k\geq 1}$. Hence, for a random index $K \in \sigma(\{x_k:k \geq 1\})$, we have $\mathbb E[t_K | K]=K/(2\pi N)$, and therefore $\mathbb E[t_K]=\mathbb E[K]/(2\pi N)$ whenever $\E[K] < +\infty$.}

{Moreover}, we emphasize the following simplified notation {below}. 
\begin{itemize}
    \item For our rescaled $\text{CHL}_N$, we abbreviate the slit map by $S_x:=S^{1,\lambda/N}_x$ and we abbreviate the rescaled $\text{CHL}_N$ map by $\mathcal C_{t}:=\mathcal C^{1,\lambda/N}_{t}$.
    \item For any $A\subset \mathbb{T}_1$, if $x\in A$, we denote the broadened inverse slit map by
\begin{align}
    S_{x}^{\mathtt{inv}}(A):=
    \begin{cases}
        S_x^{-1}(A \cup[x, x+i\lambda/N])\,, \quad &x \in A\,,\\
        S_x^{-1}(A)\,, &x \not\in A\,
    \end{cases}
\end{align} 
to track the change of probability that $A \cup \{ \text{trees on }A\text{ at time }t\}$ is hit for any $A\subset \mathbb{T}_1$. Also, for $t>0$ we denote
\begin{align}
    \chl_{t}^{\mathtt{inv}}(A):= S_{x_{\#\{ j:t_j \leq t\}}}^{\mathtt{inv}} \circ \ldots \circ S_{x_1}^{\mathtt{inv}} (A )\,.
\end{align}
{Then, by definition of $\chl_{t}^{\mathtt{inv}}(A)$ we have}
\begin{align*}
\chl_{t}^{\mathtt{inv}}(A) = \chl_{t}^{-1}(A \cup \{ \text{trees on }A\text{ at time }t\})\,.
\end{align*}
\end{itemize}

Suppose that there is an interval $I$ with $|I| = a$ where $0 \leq a \leq 2\pi$. Then, by definition we can see that both $\chl_t^{\mathtt{inv}}(I)$ and $\Big|\chl_t^{\mathtt{inv}}(I)\Big|$ {are}  Markov jump processes on the state space $[0,2\pi]$, and the transition probability of the embedded Markov chain of $\Big|\chl_t^{\mathtt{inv}}(I)\Big|$ can be formulated by
\begin{align}
p(|I|,b+db) = \P_{x \sim \mathsf{Unif}(\mathbb T_1)} \big(\big|S_x^{\mathtt{inv}}(I)\big|\in [b,b+db]\big).\label{eq-markov-transition-inverse-slit-map}
\end{align}

{To use this formula}, we first characterize the one-step transition of the embedded Markov chain. For any $I \subset \mathbb T_1$ and $\theta \in \mathbb T_1$ we define $I + \theta =\{ z + \theta : z \in I\}$, which is well-defined. Then by symmetry we have
\begin{align}
p(|I|,b+db) = \P_{y \sim \mathsf{Unif}(\mathbb T_1)} \big(\big|S_0^{\mathtt{inv}}(I+y)\big|\in [b,b+db]\big)\,.\label{eq-markov-transition-inverse-slit-map-2}
\end{align}

{Also, with a slight abuse of notation, for any $x \in \mathbb T_1$ and $\theta \in \mathbb T_1 \setminus \{0\}$ we say $S_x^{\mathtt{inv}}(\theta) = \theta'$ if and only if $S_x^{\mathtt{inv}}(\{\theta\}) = \{\theta'\}$. Recall that $g \circ f_N(z) = \tan\left(z/2N\right)$ in Subsection \ref{SS:Definitions}.} By definition we have
\begin{align}
S_0^{\mathtt{inv}}(\theta) = \theta': \,\theta' = 2\arctan \left(\operatorname{sgn}\bigl(\tan \tfrac{\theta}{2}\bigr)\cdot\sqrt{\frac{\bigl(\tan \tfrac{\theta}{2}\bigr)^2+\delta^2}{1-\delta^2}}\right)\,\label{eq-S0-1,delta-inverse-explicit}
\end{align}
for any $\theta \in \mathbb T_1 \setminus \{0, \pi\}$, and 
\begin{align*}
S_0^{\mathtt{inv}}(\pi) = \pi,\qquad S_0^{\mathtt{inv}}(\{0\}) = \left[-2\arctan\tfrac{\delta}{\sqrt{1-\delta^2}},  2\arctan\tfrac{\delta}{\sqrt{1-\delta^2}}\right]\,,
\end{align*}
which can be used to investigate the boundaries of $S_0^{\mathtt{inv}}(I)$ for any interval $I$ by the fact that
\begin{align*}
\bigcup_{\theta \in I} S_0^{\mathtt{inv}}(\{\theta\}) = S_0^{\mathtt{inv}}(I)\,.
\end{align*}
A direct computation of the derivative of the map in \eqref{eq-S0-1,delta-inverse-explicit} yields that for $\theta'$ defined in \eqref{eq-S0-1,delta-inverse-explicit} we have
\begin{align}
\mathsf{D}_\theta :=\frac{d\theta'}{d\theta} = \sqrt{1-\delta^2}\,\cdot \frac{|\tan \tfrac{\theta}{2}|}{\sqrt{\bigl(\tan \tfrac{\theta}{2}\bigr)^2+\delta^2}}\,.\label{eq-markov-transition-derivative}
\end{align}
Given the Poisson point process $\{(t_j, x_j) : t_1 < t_2 < \ldots\}$, denote by $\mathcal P_I$ the law of the embedded Markov chain of $\chl_t^{\mathtt{inv}}(I)$ and denote by $\widetilde{\mathcal P}_{|I|}$ the law of the embedded Markov chain of $\left|\chl_t^{\mathtt{inv}}(I)\right|$. Denote $I_k = \chl_{t_k}^{\mathtt{inv}}(I)$.  {We first state the behavior of the embedded interval chain. These properties identify its harmonic measure length and show that its limiting length is either \(0\) or \(2\pi\), with no further particle arrivals in the former case.}

\begin{proposition}\label{geometric properties of CHL}
    We have the following properties for the intervals $\{I_k\}_{k=0}^{\infty}$: 
    
    (1) $\{|I_k|\}_{k=0}^{\infty}$ is a positive martingale.

    (2) $|I_k|\in(0,2\pi)$.

    (3) $\lim_{k\to\infty}|I_k|\in\{0, 2\pi\}$ a.s.. In particular, if $\lim_{k\to\infty}|I_k|=0$, {there exists $k$ such that the interval receives no further particles after time $k$}.
\end{proposition}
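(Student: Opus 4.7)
My plan is to prove the three claims in sequence, relying on the translation invariance of $\mathsf{Unif}(\mathbb T_1)$ and the explicit derivative formula \eqref{eq-markov-transition-derivative}. For $(1)$, I would compute $\E_{x\sim \mathsf{Unif}(\mathbb T_1)}[|S_x^{\mathtt{inv}}(I)|]$ by splitting according to whether $x$ lies in $I$, which yields
\[
\E_x\!\left[|S_x^{\mathtt{inv}}(I)|\right]=\tfrac{1}{2\pi}\int_{\mathbb T_1}|S_x^{-1}(I)|\,dx + \tfrac{|I|\cdot|J_0|}{2\pi},
\]
where $J_0:=S_0^{\mathtt{inv}}(\{0\})$ has length $4\arctan(\delta/\sqrt{1-\delta^2})$. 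Applying the translation identity $|S_x^{-1}(I)|=|S_0^{-1}(I-x)|$ and Fubini reduces the first integral to $|I|(2\pi-|J_0|)$, so the two contributions sum to exactly $|I|$; since the embedded chain is driven by i.i.d.\ uniform coordinates, this gives $\E[|I_{k+1}|\mid\mathcal F_k]=|I_k|$.

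For $(2)$, the upper bound follows because $S_x^{-1}$ is an injection from $\mathbb T_1\setminus\{x\}$ onto $\mathbb T_1\setminus J_x$, so $|S_x^{-1}(I)|\leq 2\pi-|J_x|$ with strict inequality whenever $I\subsetneq \mathbb T_1$; adding $|J_x|$ in the case $x\in I$ still cannot reach $2\pi$. For the lower bound, if $x\in I$ then $|S_x^{\mathtt{inv}}(I)|\geq |J_0|>0$ immediately, while if $x\notin I$ positivity of the derivative in \eqref{eq-markov-transition-derivative} on $\mathbb T_1\setminus\{x\}$ forces $|S_x^{-1}(I)|>0$.

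For $(3)$, the bounded martingale $\{|I_k|\}$ converges a.s.\ to some $L\in[0,2\pi]$ by Doob's theorem, and its uniform $L^2$-bound gives $\sum_{k\geq 0}\E[(|I_{k+1}|-|I_k|)^2]<\infty$, hence $\mathsf{Var}(|I_{k+1}|\mid\mathcal F_k)\to 0$ almost surely. I would then verify by direct calculation, using \eqref{eq-markov-transition-derivative} together with the size-$|J_0|$ jump of $x\mapsto |S_x^{\mathtt{inv}}(I)|$ at the endpoints of $I$, that there is a constant $c_\delta>0$ with
\[
\mathsf{Var}_{x\sim \mathsf{Unif}(\mathbb T_1)}\!\left(|S_x^{\mathtt{inv}}(I)|\right)\geq c_\delta\,|I|(2\pi-|I|),
\]
which forces $L\in\{0,2\pi\}$. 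For the ``in particular'' assertion, note that whenever $x_{k+1}\in I_k$ one has $|I_{k+1}|\geq |J_0|>0$, so on the event $\{\lim_k|I_k|=0\}$ the occurrence $\{x_{k+1}\in I_k\}$ (that is, the tree on $I$ receiving a particle) can happen only finitely often.

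The main obstacle I anticipate is the variance lower bound used in $(3)$: the $|J_0|$-sized discontinuity at $\partial I$ is a robust source of variability, but converting it into the clean bound $c_\delta|I|(2\pi-|I|)$ requires comparing the stretching of $S_x^{-1}$ near and far from $\partial I$ via a second-moment computation of $\int_{\mathbb T_1}|S_x^{-1}(I)|^2\,dx$ analogous to the Fubini argument for $(1)$.
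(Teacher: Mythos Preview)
Your proposal is correct. Two points of comparison with the paper's treatment are worth noting.

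For part $(1)$, the paper uses a slicker functional-equation argument: defining $P(x)=\E[|I_{k+1}|\mid |I_k|=x]$, it observes $P(0)=0$, $P(2\pi)=2\pi$, and $P(x+y)=P(x)+P(y)$ by splitting an interval of length $x+y$ into two adjacent pieces and using additivity of the map $A\mapsto |S_x^{\mathtt{inv}}(A)|$ over disjoint sets. Linearity then forces $P(x)=x$. Your direct Fubini computation achieves the same end; it is a bit longer but has the advantage of being fully explicit and of making the decomposition into the ``stretched'' part $S_x^{-1}(I)$ and the ``slit preimage'' $J_0$ transparent.

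For the first assertion in $(3)$, the paper simply says that by martingale properties it suffices to prove $(1)$ and the second part of $(3)$, leaving the implication $L\in\{0,2\pi\}$ implicit. Your outline is more honest here: the convergence $|I_k|\to L$ requires only boundedness, but identifying $L$ as a boundary value genuinely needs a quantitative variance lower bound of the type you state. That bound is exactly the content of Lemma~\ref{estimation of I(N-I)-1} later in the paper, which gives $\E[(|I_{k+1}|-|I_k|)^2\mid |I_k|]\asymp\delta^2\min(|I_k|,2\pi-|I_k|,\delta)$; this is equivalent (up to the value of $c_\delta$) to your proposed inequality $\mathsf{Var}\geq c_\delta|I|(2\pi-|I|)$. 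So the ``main obstacle'' you flag is real but is resolved by machinery the paper develops anyway. Your argument for the ``in particular'' clause coincides with the paper's: once $|I_k|<|J_0|$ for all large $k$, the jump $|I_{k+1}|\geq |J_0|$ triggered by $x_{k+1}\in I_k$ is impossible.
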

\begin{proof}
    By our definition of $\{I_k\}_{k=1}^{\infty}$, it suffices to prove (1) and (3). First we prove (1). Define $$P(x) = \E\big[|I_{k+1}| \big| |I_k| = x \big].$$ Then $P(x)$ is nonnegative, $P(0) = 0$ and $P(2\pi) = 2\pi$. Moreover, by additivity to expectations, dividing the interval $I_k$ with length $x+y$ into two intervals with lengths $x,y$ yields $P(x+y)=P(x)+P(y)$ for $x,y \geq 0$ such that $x + y \leq 2\pi$. Therefore, $P(x)$ is linear and $P(x)=x$ by the boundary values at $x=0, 2\pi$, which completes our proof.

    For the first part of (3), we know that $\lim_{k\to\infty}|I_k|$ exists a.s. by (1). It suffices to prove that $\lim_{k\to\infty}|I_k|\in\{ 0,2\pi\}$ a.s.. Suppose on the contrary that there exists $0 < a < b < 2\pi$ such that $\P(a<\lim_{k\to\infty}|I_k|<b) > 0\,.$ Then $\bigcup_{\pi\delta^2/(10+0.5\delta^2)<h<2\pi}\bigl(h-\tfrac{1}{20}\delta^2(2\pi - h), h\bigr)$ is an {open cover} of $[a,b]$, so there exist $0<h_1<\ldots<h_s < 2\pi$ such that $(a,b) \subset \bigcup_{1 \leq k \leq s} \bigl(h_k-\tfrac{1}{20}\delta^2(2\pi - h_k), h_k\bigr)$. Therefore, we may assume $b-a \leq \tfrac{1}{10}\delta^2(2\pi - b)$ without loss of generality, and denote $r = \bigl(1-\sqrt{1-\delta^2}\bigr)(2\pi-b) > \tfrac{1}{10}\delta^2(2\pi- b)\,.$ Given any $I$ such that $|I| < b$, we have $\min_{x \in I}(|S_x^{\mathtt{inv}}(I)| - |I|) \geq r > b-a$ since 
    $$|S_x^{\mathtt{inv}}(I)| - |I| = \int_{I^c} \left(1-\mathsf{D}_\theta\right)d\theta \geq \bigl(1-\sqrt{1-\delta^2}\bigr)|I^c|$$
    by \eqref{eq-markov-transition-derivative}. Therefore, for $x \sim \mathsf{Unif}(\mathbb T_1)$ we have
    $$\P\big(|S_x^{\mathtt{inv}}(I)|\in(a,b)\big| |I|\in(a,b)\big) \leq \P\big(x \not\in I\big| |I|\in (a,b)\big) \leq 1-\tfrac{|I|}{2\pi} \leq 1 - \tfrac{a}{2\pi}\,,$$
    which yields for all $n \geq 1$ we have
    $$\lim_{m\rightarrow\infty} \P(|I_k|\in (a,b) \mbox{ for }n\leq k \leq n+m) \leq \lim_{m \rightarrow \infty} \bigl(1 - \tfrac{a}{2\pi}\bigr)^m = 0\,.$$
    Therefore $\P(a<\lim_{k\to\infty}|I_k|<b) = 0$, which is a contradiction.
    
    For the second part of (3), since $\lim_{k\to\infty}|I_k|=0$, there exists $k_0\in \mathbb{N}^*$ such that for any $k\ge k_0, |I_k|<\arctan\frac{\delta}{\sqrt{1-\delta^2}}.$ With (2) of Lemma \ref{lem-properties-mathcal-P}, we note that if there exists $k'>k$ such that the interval $I_{k'}$ will receive particles, then $|I_{k'}|\ge 4\arctan\frac{\delta}{\sqrt{1-\delta^2}}$, which leads to contradiction.
\end{proof}
Now we state the main quantitative properties of our Markov chain $\mathcal P_\cdot$, used in all our proofs.
\begin{lemma}\label{lem-properties-mathcal-P}
Suppose $\{ I_k\}_{k \geq 0} \sim \mathcal P_I$ for some closed interval $I \subset \mathbb T_1$. Then we have
\begin{enumerate}

\item \label{lem 3.2 1}For all $x\not\in I$,
\begin{align}\label{eq-estimation-x-notin-I}
\left|S_x^{\mathtt{inv}}(I)\right| \leq \sqrt{1-\delta^2} |I| \leq \bigl(1-\tfrac{1}{2}\delta^2\bigr)|I|.
\end{align}
As a corollary, with probability at least $1-\tfrac{|I|}{2\pi}$ we have $$|S_x^{\mathtt{inv}}(I)| \leq \sqrt{1-\delta^2} |I| \leq \bigl(1-\tfrac{1}{2}\delta^2\bigr)|I|.$$

\item \label{lem 3.2 -1}For all $x \in I$ we have $|S_x^{\mathtt{inv}}(I)| \geq |I|$.

\item \label{lem 3.2 0}For all $x\in \mathbb T_1$ we have $\big||S_x^{\mathtt{inv}}(I)| - |I|\big| \leq 4\arctan \frac{\delta}{\sqrt{1-\delta^2}}$.

\item \label{lem 3.2 2} If $|I_{k+1}| \geq |I_k|$, we have $$4\arctan\frac{\delta}{\sqrt{1-\delta^2}} \leq |I_{k+1}| \leq 4\arctan\frac{\delta}{\sqrt{1-\delta^2}} + \left(1 - \frac{1}{2}\delta^2\right)|I_k|.$$

\item \label{lem 3.2 3} If $|I_k| \leq \frac{\delta}{10}$, then there is probability at least $\frac{\delta-|I_k|}{2\pi}$ such that $|I_{k+1}| < \frac{1}{2}|I_k|.$

\item \label{lem 3.2 4} If $|I_k| \leq a \leq \frac{\delta}{10}$ for some positive constant $a$, then there is probability at least {$1-4\delta^{-1}a \geq \frac{3}{5}$} such that $\{|I_{k+j}|\}_{j \geq 0}$ is decreasing.

\item \label{lem 3.2 5} For $|I_0| \leq \frac{\delta}{10}$, denote 
\begin{align}\label{eq-def-sigma-star}
\sigma_{*}(I_0) = \inf\{ t \geq 1: |I_t| \leq \delta^3\mbox{ or } |I_t| > |I_{t-1}|\}\,,
\end{align}
then $\E_{\mathcal P_{I_0}}[\sigma_* (I_0)] \leq \delta^{-2}$.
\end{enumerate}
\end{lemma}
\begin{proof}
For (\ref{lem 3.2 1}), it follows directly from \eqref{eq-markov-transition-derivative}. 

For (\ref{lem 3.2 -1}), since $0 \not\in I^c$, we have $|S_x^{\mathtt{inv}}(I)| = 2\pi - \int_{I^c}\mathsf D_\theta d\theta \geq 2\pi - |\overline{I^c}| = |I|$.

For (\ref{lem 3.2 0}), note that when $x\not\in I$, we have $\big||S_x^{\mathtt{inv}}(I)| - |I|\big| =\big||S_x^{\mathtt{inv}}(\overline{I^c})| - |\overline{I^c}|\big|$ and $x \in \overline{I^c}$. Therefore, it suffices to deal with the $x \in I$ case, where we may denote the interval by $[a,b]$ for some $-\pi \leq a \leq 0 \leq b \leq \pi$. In this case, by (\ref{lem 3.2 -1}) we have
\begin{align*}
0 \leq |S_x^{\mathtt{inv}}(I)| - |I|&= \int_{[a,0)\cup(0,b]}\mathsf D_\theta d\theta + 4\arctan\frac{\delta}{\sqrt{1-\delta^2}} - |I|\\
&\leq |I| + 4\arctan\frac{\delta}{\sqrt{1-\delta^2}} - |I| =  4\arctan\frac{\delta}{\sqrt{1-\delta^2}}\,,
\end{align*}
which proves (\ref{lem 3.2 0}).

For (\ref{lem 3.2 2}), suppose $I_k = [a,b]$ and then we have $x_{k+1} \in [a,b]$, which yields $$\left[x_{k+1} -2\arctan\frac{\delta}{\sqrt{1-\delta^2}} ,  x_{k+1} + 2\arctan\frac{\delta}{\sqrt{1-\delta^2}}\right] = S_{x_{k+1}}^{\mathtt{inv}}(\{ x_{k+1}\}) \subset I_{k+1},$$ and therefore yields the lower bound. The upper bound comes from an application of (\ref{lem 3.2 1}) on $[a,x_{k+1})$ and $(x_{k+1},b]$.

For (\ref{lem 3.2 3}), denote {by} $\operatorname{Ex}(I)$ the interval of length $\delta$ with the same center as $I$ for any interval $I \subset \mathbb T_1$. Then by \eqref{eq-markov-transition-derivative} we have that whenever $|I_k| \leq \frac{\delta}{10}$ and $x_{k+1} \in \operatorname{Ex}(I_k) \setminus I_k$ we have
\begin{align}
 \frac{|I_{k+1}|}{|I_k|} \leq \max_{|\theta| \leq \delta }\left\{\sqrt{1-\delta^2}\,\cdot \frac{|\tan \tfrac{\theta}{2}|}{\sqrt{\bigl(\tan \tfrac{\theta}{2}\bigr)^2+\delta^2}} \right\} < \frac{1}{2}\,,
\end{align}
which yields (\ref{lem 3.2 3}) by the fact that $$\P_{\mathcal P} (x_{k+1} \in \operatorname{Ex}(I_k) \setminus I_k) \geq \frac{\delta - |I_k|}{2\pi}.$$

For (\ref{lem 3.2 4}), note that $$\P_{\mathcal P} ( |I_{k+1}| > |I_k|) = \P_{\mathcal P} (x_{k+1} \in  I_k) = \frac{|I_k|}{2\pi}\,.$$Therefore, combining the result in (3) we have
\begin{align}\label{lb-<1/2-or->1}
\P_{\mathcal P_I} \left( |I_{k+1}| > |I_k| \mbox{ or } |I_{k+1}| < \frac{1}{2}|I_k|\right) \geq \P_{\mathcal P} \left( |I_{k+1}| < \frac{1}{2}|I_k|\right) +\frac{|I_k|}{2\pi} \geq \frac{\delta}{2\pi}\,.
\end{align}
Next, define
\begin{align}
K &= \left\{ n : |I_{k+n}| > |I_{k+n-1}|\mbox{ or }|I_{k+n}| < \frac{1}{2}|I_{k+n-1}|\right\} \cup \{0\}= \{ 0 = n_0 < n_1  < \ldots\}\,,
\end{align}
then, by \eqref{lb-<1/2-or->1} we have
\begin{align*}
&\P_{\mathcal P_I}\left(|I_{k+n_l}| < \frac{1}{2}|I_{k+n_l-1}|\Big| |I_{k+n_l-1}| = \iota\right) \\
=\ & \sum_{v \geq 1}\P_{\mathcal P_I}(n_l = v)\cdot\P_{\mathcal P_I}\Big(|I_{k+v}| <\frac{1}{2}|I_{k+v-1}| \Big| |I_{k+v-1}|=\iota\,, n_l = v\Big)\\
=\ & \sum_{v \geq 1}\P_{\mathcal P_I}(n_l = v)\cdot\P_{\mathcal P_I}\Big(|I_{k+v}| <\frac{1}{2}|I_{k+v-1}| \Big| |I_{k+v-1}|=\iota\,, |I_{k+v}| <\frac{1}{2}|I_{k+v-1}| \mbox{ or }|I_{k+v}| >|I_{k+v-1}|\,;\\
\ & |K \cap \{x \in \mathbb Z : 0 \leq x \leq v - 1\} |= l\Big)\\
=\ & \sum_{v\geq 1}\P_{\mathcal P_I}(n_l = v)\cdot\tfrac{\P_{\mathcal P_I}\Big(|I_{k+v}| <\frac{1}{2}|I_{k+v-1}| \Big| |I_{k+v-1}|=\iota\Bigg)}{\P_{\mathcal P_I}\Big(|I_{k+v}| <\frac{1}{2}|I_{k+v-1}| \Big| |I_{k+v-1}|=\iota\Big ) + \P_{\mathcal P_I}\Big(|I_{k+v}|>|I_{k+v-1}| \Big| |I_{k+v-1}|=\iota\Big)} \\
\geq\ & \sum_{v\geq 1}\P_{\mathcal P_I}(n_l = v)\cdot \tfrac{\tfrac{\delta - \iota}{2\pi}}{\tfrac{\delta - \iota}{2\pi}+\tfrac{ \iota}{2\pi}} = \frac{\delta - \iota}{\delta}\,,
\end{align*}
since $\frac{\delta - \iota}{\delta}$ is decreasing with $\iota$, we have
\begin{align*}
&\P_{\mathcal P_I}\left(|I_{k+n_l}| < \frac{1}{2}|I_{k+n_l-1}|\Big| |I_{k+n_j}| < \frac{1}{2}|I_{k+n_j-1}|\mbox{ for }1 \leq j \leq l-1\right) \\
= \ & \sum_{v=1}\P_{\mathcal P_I}(n_l = v)\cdot\P_{\mathcal P_I}\left(|I_{k+n_l}| < \frac{1}{2}|I_{k+n_l-1}|\Big| |I_{k+n_j}| < \frac{1}{2}|I_{k+n_j-1}|\mbox{ for }1 \leq j \leq l-1\,, n_l = v\right) \\ 
\geq \ & \sum_{v=1}\P_{\mathcal P_I}(n_l = v)\cdot\min_{h \leq a/2^{l-1}}\P_{\mathcal P_I}\left(|I_{k+v}| < \frac{1}{2}|I_{k+v-1}|\Big| |I_{k+v-1}| = h\right) \geq \frac{\delta - \tfrac{a}{2^{l-1}}}{\delta}\,,
\end{align*}
therefore, we have
\begin{align}\label{eq-Markov-chain-estimate-(4)-1}
&\P_{\mathcal P_I}(|I_{k+j}|<|I_{k+j-1}| \mbox{ for all }j \geq 1)\nonumber \\
\geq\ & \prod_{l=1}^{+ \infty} \P_{\mathcal P_I}\left(|I_{k+n_l}| < \frac{1}{2}|I_{k+n_l-1}|\Big| |I_{k+n_j}| < \frac{1}{2}|I_{k+n_j-1}|\mbox{ for }1 \leq j \leq l-1\right) \nonumber \\
\geq\ & \prod_{l=1}^{\infty}\left(1 - \frac{a}{\delta\cdot 2^{l-1}}\right) > 1 - 4\delta^{-1}a \geq \frac{3}{5}\,,
\end{align}

{which yields (\ref{lem 3.2 4}).} 

For (\ref{lem 3.2 5}), we use the notation in the proof of (\ref{lem 3.2 4}) with $k = 0$. {By \eqref{lb-<1/2-or->1}, the increments $\{n_l - n_{l-1}\}_{l \geq 1}$ are stochastically dominated by independent geometric random variables with parameter $\frac{\delta}{2\pi}$. Therefore, by the stochastic domination of geometric variables we have}
\begin{align}
\,\quad\,\E[\sigma_*(I_0)] &\leq \delta^{-1}\log(\delta^{-1})\cdot\sum_{m \geq 1}\P_{\mathcal P_I}(\sigma_*(I_0) > m\delta^{-1}\log \delta^{-1})\nonumber\\
&\leq \delta^{-1}\log(\delta^{-1})\cdot\sum_{m \geq 1} \P_{\mathcal P_I}(n_{\lceil10\log \delta^{-1}\rceil} > m\delta^{-1}\log \delta^{-1})\nonumber \\
&= \delta^{-1}\log(\delta^{-1}) \cdot \sum_{m\geq1} \P\bigl(\mathsf{Bin}\bigl(m\delta^{-1}\log \delta^{-1},\tfrac{\delta}{2\pi}\bigr)<10\log \delta^{-1}\bigr) \nonumber\\
&{\leq \delta^{-1}\log(\delta^{-1})\left(200+\sum_{m\geq 200}2\exp\left(-\tfrac{m\log \delta^{-1}}{40 \pi}\right)\right) < \delta^{-2}\,,}\label{eq-Markov-chain-estimate-(4)-2}
\end{align}
{where in the last inequality we use a Bernstein estimate that gives
\begin{align*}
\P\Big(|\mathsf{Bin}(n,p)-np| \geq snp\Big) &\leq 2\exp\left(-\frac{nps^2}{4+s}\right)
\end{align*}
for $\bigl(n,p,s\bigr)=\bigl(m\delta^{-1}\log \delta^{-1},\tfrac{\delta}{2\pi},\tfrac{1}{2}\bigr)$. This proves (\ref{lem 3.2 5}).
}

\end{proof}

\subsection{Second moment estimate}
We next prove {several second moment bounds} of the interval variation.

\begin{lemma}\label{estimation of I(N-I)-1}
    Suppose $\{I_k\}_{k \geq 0} \sim \mathcal P_I$ for some closed interval $I$. Then there exist $\tilde{C_1}, \tilde{C_2}>0$ which are independent of $I_k$, such that
    \begin{equation}\label{eq:secondmoment}
    \begin{aligned}
        \tilde{C_1}\delta^{3}\le&\mathbb{E}_{\mathcal P_I}\left[(|I_{k+1}|-|I_{k}|)^2 \Big| |I_k|\right]\le \tilde{C_2}\delta^{3}, &  \delta\le |I_k|\le 2\pi-\delta,\\
        \tilde{C_1}\delta^{2}|I_k|\le&\mathbb{E}_{\mathcal P_I}\left[(|I_{k+1}|-|I_k|)^2 \Big| |I_k|\right]\le \tilde{C_2}\delta^{2}|I_k|,  & |I_k|< \delta ,\\
        \tilde{C_1}\delta^{2}(2\pi-|I_k|)\le&\mathbb{E}_{\mathcal P_I}\left[(|I_{k+1}|-|I_k|)^2 \Big| |I_k|\right]\le \tilde{C_2}\delta^{2}(2\pi-|I_k|), &\,  2\pi-\delta<|I_k|<2\pi,
    \end{aligned}
    \end{equation}
where $|I_{k+1}| \sim p(|I_{k}|,\cdot)$ is generated by the one-step Markov transition probability defined in \eqref{eq-markov-transition-inverse-slit-map}. {In particular, for $y \in (0, 2\pi)$ let $l(y):=S_0^{\mathtt{inv}}(y)-y$ and}
    {
    \begin{align*}
        m_\delta:=\frac1\pi\int_0^{2\pi}l(y)^2\,dy\,.
    \end{align*}}
    {Then we have $m_\delta =\frac{32}{3\pi}\delta^3-2\delta^4+O(\delta^5)$, and given $h \in \bigl[\delta^\alpha,2\pi-\delta^\alpha\bigr]$ for fixed $\alpha\in[0,1)$ we have}
    \begin{equation}\label{eq:secondmoment-sharp}
        {\mathbb{E}_{\mathcal P_I}\left[(|I_{k+1}|-|I_{k}|)^2 \Big| |I_k| = h\right]=m_\delta(1+o(1))}.
    \end{equation}
\end{lemma}
\begin{proof}
{We will first derive properties on $l(\cdot)$ and then we prove \eqref{eq:secondmoment} and \eqref{eq:secondmoment-sharp} separately. For $0<y<\pi$, we denote $r := r_y = \tan\bigl(\tfrac{y}{2}\bigr)$. Then, from \eqref{eq-S0-1,delta-inverse-explicit} we have}
\begin{align*}
{l(y)}&{=2\arctan A-2\arctan r,}\qquad
{l(2\pi-y)}{=-l(y),}
\end{align*}
{where $A=\sqrt{\frac{r^2+\delta^2}{1-\delta^2}} > r$. Since}
\begin{align*}
{A-r=\frac{\delta^2(1+r^2)}{(1-\delta^2)(A+r)},\qquad
\frac{A-r}{1+A^2} \leq \arctan A-\arctan r\le \frac{A-r}{1+r^2}.}
\end{align*}
{Therefore, for $0 < y < \pi$ we have 
\begin{align}\label{eq-l(y)-temp-ub}
0 <l(y) \leq \frac{2(A-r)}{1+r^2} \leq \frac{3\delta^2}{A+r} \leq \frac{3\delta^2}{r + \delta} < \frac{6\delta^2}{y+\delta}\,.
\end{align}
}
{Differentiating the same formula gives}
\begin{align*}
{l'(y)=\frac{\sqrt{1-\delta^2}\,r}{\sqrt{r^2+\delta^2}}-1 < 0,}
\end{align*}
{and hence, for $0 < y < \pi$ we have
\begin{align}
|l'(y)| = & \bigl(1-\frac{r}{\sqrt{r^2 + \delta^2}}\bigr) + \bigl(1-\sqrt{1-\delta^2}\bigr)\frac{r}{\sqrt{r^2 + \delta^2}}
\leq  \frac{\delta^2}{r^2 + \delta^2} + \delta^2\nonumber\\
\leq & \frac{\delta^2}{\bigl(\tfrac{y}{2}\bigr)^2 + \delta^2} + \frac{((2\pi)^2+1)\delta^2}{y^2+\delta^2}\leq  \frac{50\delta^2}{y^2+\delta^2}\,.\nonumber
\end{align}}
{By the fact that $l(2\pi - y) = -l(y)$, for $0 < y < 2\pi$ we get the symmetric bounds
\begin{align}\label{bound of l and derivative of l }
|l(y)|\le \frac{6\delta^2}{\delta+y\wedge(2\pi-y)}, \qquad 
|l'(y)|\leq \frac{50\delta^2}{(y\wedge(2\pi-y))^2+\delta^2}.
\end{align}}
{Moreover, for $0<y<\delta$, one has $r = \tan\bigl(\frac{y}{2}\bigr) \in \bigl(\frac{y}{2},y\bigr) \subset \bigl(0,\delta\bigr)$ and thus
\begin{align*}
\delta^2 \leq A^2 &= \frac{r^2+\delta^2}{1-\delta^2} \leq 2(y^2 + \delta^2) = 4\delta^2\,,\\
\arctan A-\arctan r &\geq \frac{A-r}{1+A^2}  \geq \frac{A-r}{2} \geq\frac{\delta^2(1+r^2)}{2(1-\delta^2)(A+r)} \geq\frac{\delta^2(1+r^2)}{2(2\delta+\delta)} > \frac{\delta}{6}\,,\\
\arctan A-\arctan r &\leq \frac{A-r}{1+r^2} \leq A-r\leq \frac{2\delta^2(1+r^2)}{(A+r)}\leq \frac{3\delta^2}{\delta + r} < 3\delta\,,
\end{align*}
therefore, we have
\begin{align}
\frac{\delta}{3} < l(y) < 6\delta\mbox{ for all } 0 < y < \delta\,.\label{eq:bound-l-when-a-<<-delta}
\end{align}
Finally, the upper bound (\ref{bound of l and derivative of l }) implies}
\begin{align}
\int_0^{2\pi}l(y)^2\,dy\le 72\int_0^\pi \frac{\delta^4}{(\delta+y)^2}\,dy\le 72\delta^3.\label{eq:delta-cube-ub-for-int-lsquare}
\end{align}

{Next, we prove \eqref{eq:secondmoment}. By rotation invariance, we take $I_k=[0,a]$ without loss of generality for some $a \in (0, 2\pi)$. Then, we have $|I_k| =a$ and the length increment caused by a slit at $x$ is}
{\begin{align*}
S_x^{\mathtt{inv}}([0,a])-a =l(x) - l\left(2\pi\left\{\frac{x-a}{2\pi}\right\}\right)
=\begin{cases}
        l(a-x)+l(x), &0\le x<a,\\
        l(x)-l(x-a), &a\le x<2\pi.
    \end{cases}
\end{align*}}
Therefore, we have
\begin{align}
\mathbb{E}_{\mathcal P_I}\left[(|I_{k+1}|-|I_{k}|)^2 \Big| |I_k| = a\right] &= \frac{1}{2\pi}\int_{0}^{2\pi}(S_x^{\mathtt{inv}}([0,a])-a)^2dx \nonumber\\
&= \frac{1}{2\pi}\int_{0}^{2\pi}\bigl(l(x) - l\left(2\pi\left\{\frac{x-a}{2\pi}\right\}\right)\bigr)^2 dx\,,\label{eq:second-moment-integral-rep}
\end{align}

{Based on \eqref{eq:second-moment-integral-rep}, we show the first line of \eqref{eq:secondmoment}. If $\delta\le a\le2\pi-\delta$, the upper bound follows immediately from 
\begin{align}
\eqref{eq:second-moment-integral-rep}
    \leq \frac{2}{2\pi}\int_{0}^{2\pi}\bigl(l(x)^2 + l\left(2\pi\left\{\frac{x-a}{2\pi}\right\}\right)^2\bigr)dx\overset{\eqref{eq:delta-cube-ub-for-int-lsquare}}{\leq }\frac{144}{\pi}\delta^3\,.\label{eq-ub-first-line-second-moment}
\end{align}

For the lower bound, by replacing $[0,a]$ with its complement if necessary, we may assume $\delta\le a\le\pi$ without loss of generality. Then, by \eqref{eq:bound-l-when-a-<<-delta} we have
\begin{align}
\eqref{eq:second-moment-integral-rep} \geq \frac{1}{2\pi}\int_{0}^{\delta }l(x)^2dx \overset{\eqref{eq:bound-l-when-a-<<-delta}}{\geq } \frac{1}{2\pi}\delta\cdot\bigl(\frac{\delta}{3}\bigr)^2 = \frac{1}{18\pi}\delta^3\,.\label{eq-lb-first-line-second-moment}
\end{align}
Combining \eqref{eq-ub-first-line-second-moment} and \eqref{eq-lb-first-line-second-moment}, we have shown the first line of \eqref{eq:secondmoment}.

Next we proceed to the second line of \eqref{eq:secondmoment}. If \(0<a<\delta\), then \eqref{eq:bound-l-when-a-<<-delta} gives \(l(y)\in (\frac{\delta}{3}, 6\delta)\) for \(0<y<\delta\). Since both \(x\) and \(a-x\) lie in \(\bigl(0,\delta\bigr)\) when $0 < x < a$, we have \(l(a-x)+l(x)\in(\frac{2\delta}{3}, 12\delta)\), which gives
\begin{align}
\int_{[0,a]}\bigl(|S_x^{\mathtt{inv}}([0,a])|-a\bigr)^2\,dx = \int_{[0,a]} (l(a-x)+l(x))^2 dx\in \left[\frac{4a\delta^2}{9}\,, 144a\delta^2\right]\,.\label{eq-[0,a]-int-bound}
\end{align}
On $[0,a]^c$, (\ref{bound of l and derivative of l }) gives
\begin{align}
0\leq\int_{[0,a]^c}\bigl(|S_x^{\mathtt{inv}}([0,a])|-a\bigr)^2\,dx &= \int_{[0,a]^c} (l(x) - l(x-a))^2 dx \nonumber\\
&\le a^2\int_0^{2\pi}\left(\frac{50\delta^2}{(y\wedge (2\pi - y))^2+\delta^2}\right)^2dy\nonumber \\
&= 5000a^2\biggl(\frac{\delta^2}{2}\cdot \frac{\pi}{\pi^2+\delta^2} + \frac{\delta}{2}\arctan\bigl(\frac{\pi}{\delta}\bigr)\biggr)\nonumber\\
&< 5000a^2\delta\pi < 20000a\delta^2\,.\label{eq-[0,a]-complement-int-bound}
\end{align}
Combining \eqref{eq-[0,a]-int-bound} and \eqref{eq-[0,a]-complement-int-bound} proves the second line of \eqref{eq:secondmoment}; the third follows by applying the same argument to the complementary interval of length $2\pi-a$.}

{We now prove \eqref{eq:secondmoment-sharp}. Note that $\int_0^{2\pi}l(y)^2\,dy$ can be represented in the following form by substitution of $y$ by $2\arctan r$:}
\begin{align}
\int_0^{2\pi}l(y)^2\,dy=16\int_0^\infty \frac{1}{1+r^2}\left(\arctan\sqrt{\frac{r^2+\delta^2}{1-\delta^2}}-\arctan r\right)^2dr.\label{eq-tan-rep-of-int-l^2}
\end{align}
{Splitting the integral into $r\le \delta$ and $r\ge\delta$, Taylor expansion of $\arctan$ gives}
\begin{align*}
&\arctan\sqrt{\frac{r^2+\delta^2}{1-\delta^2}}-\arctan r\in \left[\frac{\sqrt{\tfrac{r^2+\delta^2}{1-\delta^2}}-r}{\tfrac{r^2+1}{1-\delta^2}}\,, \frac{\sqrt{\tfrac{r^2+\delta^2}{1-\delta^2}}-r}{r^2+1}\right]\\
=\ & \left[\frac{\delta^2}{\sqrt{\tfrac{r^2+\delta^2}{1-\delta^2}}+r}\,, \frac{\tfrac{\delta^2}{1-\delta^2}}{\sqrt{\tfrac{r^2+\delta^2}{1-\delta^2}}+r}\right] \subset \left[\frac{\delta^2\sqrt{1-\delta^2}}{\sqrt{r^2+\delta^2}+r}\,, \frac{\delta^2(1-\delta^2)^{-1}}{\sqrt{r^2+\delta^2}+r}\right]
\end{align*}
{Therefore, we have
\begin{align*}
\arctan\sqrt{\frac{r^2+\delta^2}{1-\delta^2}}-\arctan r = \frac{\delta^2}{\sqrt{r^2+\delta^2}+r}\bigl(1+O(\delta^2)\bigr)= \bigl(\sqrt{r^2+\delta^2}-r\bigr)\bigl(1+O(\delta^2)\bigr)\,.
\end{align*}
}
{Therefore, by the substitution $r=\delta\sinh u$,}
\begin{align*}
{\int_0^{2\pi}l(y)^2\,dy}
&{=16\bigl(1+O(\delta^2)\bigr)^2\int_0^\infty \frac{\bigl(\sqrt{r^2+\delta^2}-r\bigr)^2}{1+r^2}\,dr}\\
&{=\bigl(1+O(\delta^2)\bigr)\bigl(32\delta-32\sqrt{1-\delta^2}\arcsin\delta-8\pi\bigl(1-\sqrt{1-\delta^2}\bigr)^2\bigr)}\\
&{=\frac{32}{3}\delta^3-2\pi\delta^4+O(\delta^5)\,,}
\end{align*}
{which proves the expansion of $m_\delta$. Next, suppose that $h \in \bigl[\delta^\alpha,2\pi-\delta^\alpha\bigr]$ for some $\alpha \in [0,1)$. Then, by (\ref{bound of l and derivative of l }) we have
\begin{align*}
\Big|\int_{[0,h]}l(h-x)l(x) dx\Big|\leq \int_{[0,h]}\frac{36\delta^4 dx}{(\delta+x)(\delta + h - x)}\leq \frac{72\delta^4}{2\delta + h} \int_{[\delta, h+\delta]}\frac{dx}{x} = O(\delta^{4-\alpha}\log(\delta^{-1}))\,,
\end{align*}
and similarly, for $h < x < \pi+\frac{h}{2}$ we have $|l(x)l(x-h)| \leq \frac{36\delta^4}{(\delta + x-h)(\delta + h)}$ and thus
\begin{align*}
\Big|\int_{[0,h]^c}l(x-h)l(x)dx\Big| \leq 72\delta^4\int_{\bigl[h,\pi + \tfrac{h}{2}\bigr]} \frac{1}{(\delta + x-h)(\delta + h)} = O(\delta^{4-\alpha}\log(\delta^{-1}))\,,
\end{align*}
 Thus, we have
\begin{align*}
&\E_{\mathcal P_I}\Big[(|I_{k+1}|-|I_k|)^2\Big| |I_k| = h\Big] =\frac{1}{2\pi}\int_{[0,2\pi]}\bigl(|S_x^{\mathtt{inv}}([0,h])|-h\bigr)^2\,dx \\
=\ & \frac{1}{2\pi}\int_{[0,h]}\bigl(l(h-x)+l(x)\bigr)^2\,dx + \frac{1}{2\pi}\int_{[0,h]^c}\bigl(l(x)-l(x-h)\bigr)^2\,dx \\
=\ & m_\delta + \frac{1}{\pi}\int_{[0,h]}l(h-x)l(x)dx - \frac{1}{\pi}\int_{[0,h]^c}l(x-h)l(x)dx \\
=\ & m_\delta + O(\delta^{4-\alpha}\log(\delta^{-1})) = m_\delta(1+o(1))\,,
\end{align*}
completing the proof of \eqref{eq:secondmoment-sharp}.}
\end{proof}

Next, given $\{I_k\}_{k \geq 0} \sim \mathcal P_{I_0}$ for a given closed interval $I_0 \subset \mathbb T_1$, we state the following lemma to estimate the average time needed for an interval to become very small or large under $\mathcal P$. Define the stopping time
\begin{align*}
    \sigma_{a,b}(I_0)&=\inf\left\{k\in \mathbb{N}^*: |I_k|<a\ \text{or}\ |I_k|>b\right\}
\end{align*}
with $0< a\le  b\le 2\pi$
and
$$
    g_{a,b}(x)=\mathbb{E}_{\mathcal P_{I_0}}\left[\sigma_{a,b}(I_0)\Big||I_0|=x\right]
$$
with $g_{a,b}(x)=0$ when $0<x\le a, b\le x<2\pi$. 

{Define
\[
    v_\delta(h):=\mathbb{E}_{\mathcal P_{I_0}}\left[\left(|I_{k+1}|-|I_k|\right)^2\,\middle|\, |I_k|=h\right].
\]}
{We use the following exit estimate for the interval-length martingale $\{|I_k|\}_{k\ge0}$.}
\begin{lemma}\label{lem:quadratic-exit-estimate}
{For all sufficiently small $\delta$, we have
\begin{equation}\label{eq:one-step-jump-bound}
    \big||I_{k+1}|-|I_k|\big|\le 5\delta \qquad \text{a.s. for all }k.
\end{equation}
Moreover, let $0<a<x<b<2\pi$. If there exists $0<v_-\le v_+<\infty$ such that $ v_-\le v_\delta(h)\le v_+$ for all $h\in(a,b)$,
then
\begin{equation}\label{eq:quadratic-exit-estimate}
    \frac{(x-a)(b-x)}{v_+}
    \le g_{a,b}(x)
    \le \frac{(x-a+5\delta)(b-x+5\delta)}{v_-}.
\end{equation}}
\end{lemma}
\begin{proof}
{\eqref{eq:one-step-jump-bound} follows directly from Lemma~\ref{lem-properties-mathcal-P}(3). To prove \eqref{eq:quadratic-exit-estimate}, let $Q(y):=(y-a)(b-y)$. Since \((|I_k|)_{k\ge0}\) is a martingale, on $\{a<|I_k|<b\}$ we have
\[
    \mathbb{E}_{\mathcal P_{I_0}}\left[Q(|I_{k+1}|)-Q(|I_k|)\,\middle|\, |I_k|\right]=-v_\delta(|I_k|).
\]
Applying optional stopping theorem to
\[
    Q(|I_{k\wedge\sigma_{a,b}(I_0)}|)
    +\sum_{j<k\wedge\sigma_{a,b}(I_0)}v_\delta(|I_j|)
\]
and then letting $k\to\infty$ gives
\[
    \mathbb{E}_{\mathcal P_{I_0}}\Big[\sum_{j<\sigma_{a,b}(I_0)}v_\delta(|I_j|)\Big]
    =Q(x)-\mathbb{E}_{\mathcal P_{I_0}}[Q(|I_{\sigma_{a,b}(I_0)}|)].
\]
Here $Q(|I_{\sigma_{a,b}(I_0)}|)\le0$. Hence, by $v_\delta\le v_+$ we have $g_{a,b}(x)\ge Q(x)/v_+$, which is the lower bound. For the upper bound, \eqref{eq:one-step-jump-bound} implies $|I_{\sigma_{a,b}(I_0)}|\in[a-5\delta,a)\cup(b,b+5\delta]$, and so
\[
    -Q(|I_{\sigma_{a,b}(I_0)}|)\le 5\delta(b-a+5\delta).
\]
Using $v_\delta\ge v_-$, we obtain
\[
    v_-g_{a,b}(x)
    \le Q(x)+5\delta(b-a+5\delta)
    =(x-a+5\delta)(b-x+5\delta),
\]
which proves the claim.}
\end{proof}

Based on Lemma \ref{lem:quadratic-exit-estimate}, we propose the following estimate of $g_{a,b}(x)$: 
\begin{lemma}\label{lem-harmonic-estimations-quadratic}
    {For $a<x<b$,}
\begin{align}\label{eq-g_a,b-lb}
        {g_{a,b}(x)\ge \frac{(x-a)(b-x)}{\tilde{C_2}\delta^3}.}
\end{align}
    {Moreover, let $\sigma'(I_0)=\sigma_{\delta^3,2\pi-\delta^3}(I_0)$. If $\frac{\delta}{10} < h \wedge (2\pi - h) \leq 5\delta$, then there exists a  universal constant $C_*$ such that for all sufficiently small $\delta$,}
 \begin{align}\label{eq:bound-sigma-prime-expectation}
      \mathbb{E}_{\mathcal P_{I_0}}\left[\sigma'(I_0)\Big||I_0|= h\right]\le C_*\delta^{-2}.
 \end{align}
\end{lemma}
\begin{proof}
By Lemma~\ref{estimation of I(N-I)-1}, for all $h \in (0,2\pi)$ we have 
\begin{align*}
\E_{\mathcal P_{I_0}}\Big[ (|I_{k+1}| - |I_k|)^2 \Big| |I_k| = h\Big] \leq \tilde{C_2}\delta^3
\end{align*}
for some constant $\tilde{C_2} > 0$. Therefore, \eqref{eq-g_a,b-lb} follows from Lemma~\ref{lem:quadratic-exit-estimate} with $v_+=\tilde{C_2}\delta^3$.

It remains to prove \eqref{eq:bound-sigma-prime-expectation}. Define $H = \{h: \frac{\delta}{10}< h \wedge (2\pi - h ) \le 5\delta \}$. Define 
\[
\eta_{I_0}=\inf\{k\ge0: |I_k| \wedge (2\pi - |I_k|)\le\frac{\delta}{10}\}\,.
\]
Also, since $\E_{\mathcal P_{I_0}}[\sigma'(I_0)]$ only depends on $|I_0|$ by symmetry, we define
\[
\Lambda(|I_0|) = \E_{\mathcal P_{I_0}}[\sigma'(I_0)]\,.
\]
Note that by Lemma~\ref{estimation of I(N-I)-1} we have
\begin{align*}
\E_{\mathcal P_{I_0}}\Big[ (|I_{k+1}| - |I_k|)^2 \Big| |I_k| = h\Big] \geq \frac{\tilde{C_1}\delta^3}{10}\mbox{ for all }h \in H.
\end{align*}
Therefore, applying Lemma~\ref{lem:quadratic-exit-estimate} with $a=\frac{\delta}{10}$ and $b=2\pi-\frac{\delta}{10}$ gives
\begin{align}\label{eq:boundary-layer-eta}
    \mathbb{E}_{\mathcal P_{I_0}}\bigl[\eta_{I_0}\bigr]\le \frac{(5\delta + 5\delta)(2\pi + 5\delta)}{\tfrac{\tilde{C_1}\delta^3}{10}} \leq C\delta^{-2}.
\end{align}
for some constant $C$. Next, define
\begin{align*}
\theta_{I_0} = \inf\{k \geq 1: &|I_{\eta_{I_0}+k}| \wedge (2\pi - |I_{\eta_{I_0}+k}|) \leq \delta^3 \mbox{ or } \\
&|I_{\eta_{I_0}+k}| \wedge (2\pi - |I_{\eta_{I_0}+k}|) > |I_{\eta_{I_0}+k-1}| \wedge (2\pi - |I_{\eta_{I_0}+k-1}|)\}\,,
\end{align*}
Then, by applying Item (4) of Lemma~\ref{lem-properties-mathcal-P} to the shorter interval between $I_k$ and $\overline{I_k^c}$, we have 
\begin{align*}
|I_{\eta_{I_0} + \theta_{I_0}}| \wedge (2\pi - |I_{\eta_{I_0} + \theta_{I_0}}|) &\in [0,\delta^3] \cup \left[4\arctan\frac{\delta}{\sqrt{1-\delta^2}} \,, 4\arctan\frac{\delta}{\sqrt{1-\delta^2}}  + \frac{\delta}{10}\right]\\
&\subset[0,\delta^3] \cup [3\delta, 5\delta]
\end{align*}
when $\delta$ is small enough. Also, by Items (6) and (7) of Lemma~\ref{lem-properties-mathcal-P} to the shorter interval between $I_k$ and $\overline{I_k^c}$, we have
\begin{align}
\P_{\mathcal P_{I_0}}\big[ |I_{\eta_{I_0} + \theta_{I_0}}| \wedge (2\pi - |I_{\eta_{I_0} + \theta_{I_0}}|) \in [3\delta,5\delta] \big] \leq \frac{2}{5}\,,\label{bound-theta-I_0-prob}\\ 
\E_{\mathcal P_{I_0}}\bigl[\theta_{I_0}\bigr] < \delta^{-2}\,,\label{bound-theta-I_0-mean}
\end{align}
\begin{align*}
\Lambda(|I_0|) &\overset{\eqref{bound-theta-I_0-prob}}{\leq} \mathbb{E}_{\mathcal P_{I_0}}\bigl[\eta_{I_0}\bigr] + \E_{\mathcal P_{I_0}}\bigl[\theta_{I_0}\bigr] + \frac{2}{5}\sup_{h \wedge (2\pi - h) \in [3\delta , 5\delta]} \Lambda(h)\\
&\overset{\eqref{eq:boundary-layer-eta},\eqref{bound-theta-I_0-mean}}{\leq} (C+1)\delta^{-2} + \frac{2}{5}\sup_{h \in H} \Lambda(h)\,.
\end{align*}
Taking supremum over $|I_0| \in H$, we have proved \eqref{eq:bound-sigma-prime-expectation} as desired.
\end{proof}

\begin{corollary}\label{cor:lem-harmonic-estimations-quadratic}
    {Fix $\alpha\in[0,1)$ and let $m_\delta$ be as in Lemma \ref{estimation of I(N-I)-1}. There exists $\varepsilon_\delta=\varepsilon_\delta(\alpha)\downarrow0$ as $\delta\downarrow 0$ such that, for $\delta^\alpha\le a<x<b\le2\pi-\delta^\alpha$ and $(x-a)\wedge(b-x)\ge\delta^\alpha$,}
    $$
        {\frac{(x-a)(b-x)}{m_\delta(1+\varepsilon_\delta)}\le g_{a,b}(x)\le \frac{(x-a)(b-x)}{m_\delta(1-\varepsilon_\delta)}.}
    $$
\end{corollary}
\begin{proof}
{Let $\eta_\delta=\eta_\delta(\alpha)\downarrow0$ be such that, uniformly for $h\in[\delta^\alpha,2\pi-\delta^\alpha]$,}
\[
    {m_\delta(1-\eta_\delta)\le v_\delta(h)\le m_\delta(1+\eta_\delta).}
\]
{This is exactly the uniform form of \eqref{eq:secondmoment-sharp}. Since $(a,b)\subset[\delta^\alpha,2\pi-\delta^\alpha]$, \eqref{eq:quadratic-exit-estimate} applies with $v_-=m_\delta(1-\eta_\delta)$ and $v_+=m_\delta(1+\eta_\delta)$. The lower bound follows immediately. For the upper bound, \eqref{eq:one-step-jump-bound} and $(x-a)\wedge(b-x)\ge\delta^\alpha$ give}
\[
    {(x-a+5\delta)(b-x+5\delta)
    \le (x-a)(b-x)\left(1+C\delta^{1-\alpha}\right).}
\]
{Therefore, $\varepsilon_\delta = \eta_\delta + C\delta^{1-\alpha}(1+\eta_\delta) \downarrow 0$ satisfies our claim.}
\end{proof}


\section{Expectation of one-arm domination time}\label{sec:Expectation of one-arm domination time}
{This section proves Theorem \ref{thm: one-arm domination time}. To give the lower bound, we compare one-arm domination with the time at which one of two semicircular inverse intervals becomes macroscopic. To give the upper bound, we introduce ``marked configurations" to track the inverse harmonic measure intervals of all trees, reducing one-arm domination to one-interval domination, where we can apply the estimates developed in Section \ref{sec:Estimate of Interval variation}.}

\subsection{Lower bound}\label{sec: Lower bound}
We first estimate the lower bound.
Consider $I:=[0,\pi)$ and $I':=\mathbb T_1\backslash I$. Given the Poisson point process $\{(t_i,x_i) : t_1 < t_2 < \ldots\}$. Denote $I_{k} := \chl_{t_k}^{\mathtt{inv}}(I), I'_{k} := \chl_{t_k}^{\mathtt{inv}}(I')$.
{Define the events $A^{(k)}_1$ and $A^{(k)}_2$ by}
\begin{align*}
     A^{(k)}_1:=\{  x_{k'+1}\notin I_{k'}, \forall  k'\ge k\},\qquad 
     A^{(k)}_2:=\{  x_{k'+1}\notin I_{k'}', \forall k'\ge k\}.
\end{align*}
Denote $T$ as
\[
    T=\inf\{k: A^{(k)}_1 \cup A^{(k)}_2 \ \text{occurs}  \}.
\]

When there is a one-arm domination, the dominating tree is either on $I$ or on $I'$. Therefore, $\upsilon_{N, \lambda}\ge t_T$ always holds. Note that 
\begin{align}\label{eq:t_T and T}
    \mathbb{E}[t_T]&=\mathbb{E}[\mathbb{E}[t_T|T]]=\mathbb{E}\left[\sum_{k=0}^{\infty}t_k\mathbb{P}(T=k)\right]=\sum_{k=0}^{\infty}\mathbb{E}[t_k]\mathbb{P}(T=k)=\sum_{k=0}^{\infty}\frac{k}{2\pi N}\mathbb{P}(T=k)\nonumber \\
    &=\frac{\mathbb{E}[T]}{2\pi N}.
\end{align}
Thus, it suffices to lower bound $\E[T]$. We shall lower bound $\E[T]$ by Lemma~\ref{lem-lower-bound-ET}:
\begin{lemma}\label{lem-lower-bound-ET}
    There exists $c'>0$ such that
    \[
        \E[T]>c' \delta^{-3}.
    \]
\end{lemma}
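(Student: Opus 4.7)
The plan is to lower bound $T$ by the exit time of the length process $\{|I_k|\}$ from a macroscopic subinterval of $(0,2\pi)$, and then invoke Lemma~\ref{lem-harmonic-estimations-quadratic} to get the $\delta^{-3}$ expectation bound. Fix a constant $a \in (0,\pi)$ (for concreteness $a=1$) and set
\[
\tau := \inf\{k \geq 1 : |I_k| < a \text{ or } |I_k| > 2\pi - a\}.
\]
Since $|I_0| = \pi \in (a, 2\pi - a)$, Lemma~\ref{lem-harmonic-estimations-quadratic} applied with $b = 2\pi - a$ gives
\[
\mathbb{E}[\tau] = g_{a, 2\pi - a}(\pi) \;\geq\; \frac{(\pi-a)^2}{\tilde{C_2}\,\delta^3} \;=:\; C_0\,\delta^{-3}.
\]

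Next I would show the pathwise inequality $\tau \leq T + O(\delta^{-2})$ on $\{T < \infty\}$. First, Proposition~\ref{geometric properties of CHL} guarantees $\lim_{k}|I_k| \in \{0,2\pi\}$ a.s., and on $\{\lim_k |I_k|=0\}$ the interval $I_k$ stops receiving particles after finite time, so $A^{(k)}_1$ holds for all sufficiently large $k$; symmetrically on $\{\lim_k |I_k|=2\pi\}$ the event $A^{(k)}_2$ eventually holds. Hence $T < \infty$ a.s. Now condition on $\{T<\infty\}$ and, by symmetry (swap $I$ with $I'$), assume $A^{(T)}_1$ occurs. Then $x_{k+1} \notin I_k$ for every $k \geq T$, so Lemma~\ref{lem-properties-mathcal-P}(\ref{lem 3.2 1}) applied iteratively gives
\[
|I_{T+m}| \;\leq\; (1-\delta^2)^{m/2}\,|I_T| \;\leq\; 2\pi\,(1-\delta^2)^{m/2},
\]
which drops below $a$ once $m \geq m_0 := \lceil 2\log(2\pi/a)/\delta^2 \rceil = O(\delta^{-2})$. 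Therefore $\tau \leq T + m_0$ pathwise on $\{T<\infty\}$.

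Taking expectations and using $\mathbb{P}(T<\infty)=1$ then yields
\[
\mathbb{E}[T] \;\geq\; \mathbb{E}[\tau] - m_0 \;\geq\; C_0\,\delta^{-3} - O(\delta^{-2}) \;\geq\; c'\,\delta^{-3}
\]
for all sufficiently small $\delta$ (equivalently for all sufficiently large $N/\lambda$; for small $N/\lambda$ the constant $c'$ can be further adjusted so the bound is vacuous or trivial). The main subtlety I anticipate is the pathwise comparison $\tau \leq T+m_0$: one must verify that the tail event $A^{(T)}_1$, which is measurable with respect to the whole future of the driving Poisson process, does not distort the one-step estimate of Lemma~\ref{lem-properties-mathcal-P}(\ref{lem 3.2 1}). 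This is not a genuine obstacle because the lemma provides a deterministic bound whenever $x_{k+1}\notin I_k$, and $A^{(T)}_1$ precisely encodes this pointwise condition for every $k\geq T$, so no conditioning correction is required.
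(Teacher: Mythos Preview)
Your proof is correct and proceeds differently from the paper's. Both arguments compare $T$ to the exit time of $|I_k|$ from a macroscopic window and appeal to Lemma~\ref{lem-harmonic-estimations-quadratic} for the $\delta^{-3}$ bound on that exit time. The paper defines $\sigma'=\inf\{k:|I_k|\notin(\pi/3,5\pi/3)\}$, introduces the post-$\sigma'$ event $A^{(\sigma')}=\{x_{\sigma'+1}\in I_{\sigma'},\,x_{\sigma'+2}\in I'_{\sigma'+1}\}$, observes this forces $T>\sigma'$, and uses the strong Markov property together with a deterministic lower bound on $|I_{\sigma'}|,|I'_{\sigma'+1}|$ to get $\P(A^{(\sigma')}\mid\mathcal F_{\sigma'})\geq 1/8$, hence $\E[T]\geq\tfrac18\E[\sigma']$. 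Your route is more elementary: from the very definition of $T$ and the \emph{deterministic} contraction in Lemma~\ref{lem-properties-mathcal-P}(\ref{lem 3.2 1}), after time $T$ the length $|I_k|$ (case $A^{(T)}_1$) or $|I'_k|=2\pi-|I_k|$ (case $A^{(T)}_2$) shrinks by the factor $\sqrt{1-\delta^2}$ at every step, so $\tau\leq T+O(\delta^{-2})$ pathwise. This avoids the strong Markov step and yields a sharper additive comparison; the paper's version, in turn, packages everything as a stopping time plus a post-stopping-time event of uniformly positive conditional probability, a formulation it reuses verbatim for the exponential lower tail in Section~\ref{sec:Tail of one-arm domination time}. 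One cosmetic point: rather than ``by symmetry assume $A^{(T)}_1$'', spell out the case $A^{(T)}_2$ explicitly (there $|I'_k|$ contracts, so $|I_k|>2\pi-a$ within $m_0$ steps); as you correctly note, no conditioning issue arises since the contraction bound is pointwise.
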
 
\begin{proof}
    Define the stopping time
\begin{align}\label{stopping time-sigma'}
    \sigma'&=\inf\left\{k\in \mathbb{N}^*: |I_k|<\frac{\pi}{3}\ \text{or}\ |I_k|>\frac{5\pi}{3}\right\}.
\end{align}
By Lemma \ref{lem-harmonic-estimations-quadratic} we obtain that
\begin{align*}
    \mathbb{E}[\sigma']\ge \frac{4\pi^2}{9\tilde{c}\delta^3}.
\end{align*}
Denote the event 
\begin{align}\label{event A^{(sigma')}}
     A^{(\sigma')}:=\{x_{\sigma'+1}\in I_{\sigma'}, x_{\sigma'+2}\in I'_{\sigma'+1}\}.
\end{align}
For large $N$, by strong Markov property and Item (3) of Lemma \ref{lem-properties-mathcal-P} we have 
\begin{align*}
\frac{|I_{\sigma'}|(2\pi - |I_{\sigma'+1}|)}{4\pi^2} \geq \frac{\bigl(\tfrac{\pi}{3}-O(\delta)\bigr)\cdot \bigl(\tfrac{5\pi}{3} - O(\delta)\bigr)}{4\pi^2} \geq \frac{1}{8}
\end{align*}
holds deterministically, which yields that there exists a constant $\tilde{c}>0$ such that 
\begin{equation}
\begin{aligned}
    {\mathbb{E}[\sigma' \mathbf{1}_{A^{(\sigma')}}]=\mathbb{E}\left[\sigma'\,\mathbb{P}\left(A^{(\sigma')}\mid \mathcal F_{\sigma'}\right)\right]\ge \frac{1}{8}\mathbb{E}[\sigma']\ge \frac{\pi}{18\tilde{c}\delta^{3}}.}
\end{aligned}
\end{equation}
This proves the lemma since $T\ge \sigma' \mathbf{1}_{A^{(\sigma')}}$.
\end{proof}
Therefore, by \eqref{eq:t_T and T} we obtain the lower bound of $\mathbb{E}[\upsilon_{N, \lambda}]$ in Theorem \ref{thm: one-arm domination time}, i.e. there exists $c>0$ such that
\begin{align*}
    \mathbb{E}[\upsilon_{N, \lambda}]>c\lambda^{-3}N^2.
\end{align*}

\subsection{Upper bound}\label{upper bound}
{We now prove the upper bound} based on our analysis of $\mathcal P_I$ for some $I \subset \mathbb T_1$. The key is to consider a series of marked intervals on $\mathbb T_1$ and introduce a Markov chain on a set of marked intervals on a unit circle. Then, we couple the set of these marked intervals with the hitting probabilities of all trees in the actual $\text{CHL}_N$ process, {so that} the marginal law of each marked interval is $\mathcal P_I$ for some $I \subset \mathbb T_1$. We now introduce the definition and generation of our marked intervals:
\begin{definition}[Marked configuration]\label{defn:Mark configuration}
    A marked configuration $\mathtt M$ on $\mathbb T_1$, denoted by \\$[(I_1 , I_2 , \ldots , I_k), (a_1 , \ldots , a_k)]$, consists of an ordered tuple of closed intervals and another ordered tuple indicating the corresponding colors of each interval. The colors $a_1 , \ldots , a_k$ are assigned to be non-zero integers. For $1 \leq j \leq k$, we say $I_j$ is marked by color $a_j$ in the marked configuration $\mathtt M =[(I_1 , I_2 , \ldots , I_k), (a_1 , \ldots , a_k)]$. Also, we denote the set $\overline{\mathbb T_1 \setminus (\bigcup_{j=1}^{k} I_j)}$ by $I_0 = I_0(\mathtt M)$. Then $I_0(\mathtt M)$ is a union of disjoint closed intervals $J_1 , \ldots, J_l$ and we say $J_j$ is marked by color $0$ for $ 1\leq j \leq l$. 
    
    For such $\mathtt M$, we define the number of colors of $\mathtt M$ by $\mathtt n(\mathtt M) = k$. Also, we define $\mathtt M(a_j) = I_j$ for $1 \leq j \leq k$, $\mathtt M(0) = I_0$, and $\mathtt M(a) = \emptyset$ for any unused color $a$. When $\mathtt n(\mathtt M)=0$, $\mathtt M$ consists of a unique interval with color $0$ and we say $\mathtt M = \emptyset$. 
    
    For $1 \leq j \leq k$ we say that $x \in \mathbb T_1$ is an endpoint of color $j$ if $x \in \partial I_j$.
\end{definition}
\begin{definition}[Marked configuration sequence coupled to a $\text{CHL}_1$]\label{defn:Mark configuration sequence coupled}
    Given a $\text{CHL}_1$ on $\mathbb T_1 \times \mathbb R_+$ with the Poisson point process $\mathcal A = \{ (t_j, x_j): t_j\mbox{ is increasing}\}$, define the marked configuration sequence $\{ \mathtt M_j \}_{j \geq 0}$ coupled to the $\text{CHL}_1$ recursively as follows. 
\begin{enumerate}
   \item $\mathtt M_0 = \emptyset$;
    \item $\mathtt M_{j+1} = S_{x_{j+1}}^{\mathtt{inv}} (\mathtt M_{j})$, where for $\mathtt M_{j} = [(I_1 ,\ldots , I_k),(1,\ldots , k)]$
     \begin{equation}\label{formula of Mark configuration sequence} S_{x_{j+1}}^{\mathtt{inv}} (\mathtt M_{j}) =
    \begin{cases}
    [(S_{x_{j+1}}^{\mathtt{inv}}(I_1) ,  \ldots , S_{x_{j+1}}^{\mathtt{inv}}(I_k)), (1,\cdots,k)],\  \exists 1\le l\le k, x_{j+1} \in I_l \,,  \\
    {[(S_{x_{j+1}}^{\mathtt{inv}}(I_1) , \ldots , S_{x_{j+1}}^{\mathtt{inv}}(I_k), S_{x_{j+1}}^{\mathtt{inv}}(\{x_{j+1}\})), (1,\cdots,k+1)],}\  \mbox{else}\,.
    \end{cases}
    \end{equation}
    \end{enumerate}
     For each color $a$ we define $\mathsf{Gen}_a = \inf\{ j : \mathtt M_j(a) \neq \emptyset\}$ the generating time of color $a$. 
    \end{definition}
     By construction, $\{\mathtt M_j\}_{j \geq 0}$ is a Markov process, and for {every} color $a$, $\{ \mathtt M_{j}(a)\}_{j \geq \mathsf{Gen}_a}$ {has} law of $\mathcal P$. Also, since the process is only determined by $\{ x_j\}_{j \geq 1}$, we also say $\{\mathtt M_j \}_{j \geq 0}$ is \textit{the marked configuration sequence generated by $\{ x_j \}_{j \geq 1}$}.
    For a marked configuration sequence $\{ \mathtt M_j \}_{j \geq 0}$, define the $l$-th growing color $\mathtt C_l(\{ \mathtt M_j \}_{j \geq 0})$ by the unique color $k$ such that $0 \neq |\mathtt M_l(k)| \geq |\mathtt M_{l-1} (k)|$.  {Figure~\ref{fig:marked-configuration-sequence} illustrates the two possible updates of $\mathtt M_j$: growth of an existing non-zero color and creation of a new color from the zero-colored set. In the first case we have $\mathtt C_{j+1}(\{\mathtt M_j\}_{j \geq 0}) = 2$, and in the second case we have $\mathtt C_{j+1}(\{\mathtt M_j\}_{j \geq 0}) = 4$.}

    \begin{figure}[htbp]
    \centering
    \begin{tikzpicture}[
        x=1cm,y=1cm,
        >=Stealth,
        font=\small,
        zeroarc/.style={line width=4.8pt, draw=gray!28, line cap=round},
        colone/.style={line width=4.8pt, draw=blue!80!black, line cap=round},
        coltwo/.style={line width=4.8pt, draw=orange!90!black, line cap=round},
        colthree/.style={line width=4.8pt, draw=green!50!black, line cap=round},
        colnew/.style={line width=4.8pt, draw=red!75!black, line cap=round},
        endpoint/.style={circle, draw=black, fill=white, inner sep=0.9pt},
        hitpoint/.style={circle, draw=black, fill=black, inner sep=1.3pt}
    ]
    \def\R{1.15}
    
    \node at (3.4,2.4) {$x_{j+1}\in \mathtt M_j(l)$: an existing color is updated};
    
    \begin{scope}[shift={(0,0)}]
        \draw[zeroarc] (0,0) circle[radius=\R];
        \draw[colone]   (25:\R)  arc[start angle=25,end angle=95,radius=\R];
        \draw[coltwo]   (145:\R) arc[start angle=145,end angle=215,radius=\R];
        \draw[colthree] (260:\R) arc[start angle=260,end angle=315,radius=\R];
    
        \foreach \a in {25,95,145,215,260,315}
            \node[endpoint] at (\a:\R) {};
    
        \node[hitpoint] at (180:\R) {};
    
        \node[text=blue!80!black]   at (70:1.55) {$\mathtt M_j(1)=I_1$};
        \node[text=orange!90!black] at (180:2.35) {$x_{j+1}\in \mathtt M_j(2)$};
        \node[text=green!50!black]  at (292:1.55) {$\mathtt M_j(3)$};
        \node[text=gray!70!black]   at (345:1.68) {$\mathtt M_j(0)$};
    
        \node at (0,-1.95) {$\mathtt M_j=[(I_1,I_2,I_3),(1,2,3)]$};
    \end{scope}
    
    \draw[->,thick] (2.6,0) -- (4,0)
        node[midway,above] {$S_{x_{j+1}}^{\mathtt{inv}}$};
    
    \begin{scope}[shift={(6.7,0)}]
        \draw[zeroarc] (0,0) circle[radius=\R];
        \draw[colone]   (18:\R)  arc[start angle=18,end angle=88,radius=\R];
        \draw[coltwo]   (128:\R) arc[start angle=128,end angle=232,radius=\R];
        \draw[colthree] (265:\R) arc[start angle=265,end angle=318,radius=\R];
    
        \foreach \a in {18,88,128,232,265,318}
            \node[endpoint] at (\a:\R) {};
    
        \node at (0,1.72) {$\mathtt n(\mathtt M_{j+1})=3$};
        \node[text=orange!90!black] at (165:1.95) {$\mathtt C_{j+1}=2$};
        \node at (0,-1.95)
        {$\mathtt M_{j+1}(a)=S_{x_{j+1}}^{\mathtt{inv}}(\mathtt M_j(a)),\ a=1,2,3$};
    \end{scope}
    
    \node at (3.4,-3.0) {$x_{j+1}\in \mathtt M_j(0)$: new color is generated};
    
    \begin{scope}[shift={(0,-5.4)}]
        \draw[zeroarc] (0,0) circle[radius=\R];
        \draw[colone]   (25:\R)  arc[start angle=25,end angle=95,radius=\R];
        \draw[coltwo]   (145:\R) arc[start angle=145,end angle=215,radius=\R];
        \draw[colthree] (260:\R) arc[start angle=260,end angle=315,radius=\R];
    
        \foreach \a in {25,95,145,215,260,315}
            \node[endpoint] at (\a:\R) {};
    
        \node[hitpoint] at (340:\R) {};
    
        \node[text=gray!70!black] at (335:2.1) {$x_{j+1}\in \mathtt M_j(0)$};
        \node at (0,-1.95) {$\mathtt M_j=[(I_1,I_2,I_3),(1,2,3)]$};
    \end{scope}
    
    \draw[->,thick] (2.6,-5.4) -- (4,-5.4)
        node[midway,above] {$S_{x_{j+1}}^{\mathtt{inv}}$};
    
    \begin{scope}[shift={(6.7,-5.4)}]
        \draw[zeroarc] (0,0) circle[radius=\R];
        \draw[colone]   (18:\R)  arc[start angle=18,end angle=88,radius=\R];
        \draw[coltwo]   (145:\R) arc[start angle=145,end angle=215,radius=\R];
        \draw[colthree] (255:\R) arc[start angle=255,end angle=310,radius=\R];
        \draw[colnew]   (325:\R) arc[start angle=325,end angle=352,radius=\R];
    
        \foreach \a in {18,88,145,215,255,310,325,352}
            \node[endpoint] at (\a:\R) {};
    
        \node at (0,1.98) {$\mathsf{Gen}_{4}=j+1,\qquad \mathtt n(\mathtt M_{j+1})=4$};
        \node[text=red!75!black] at (342:1.85) {$\mathtt M_{j+1}(4)$};
        \node at (0,-1.95)
        {$\mathtt M_{j+1}(4)=S_{x_{j+1}}^{\mathtt{inv}}(\{x_{j+1}\})$};
    \end{scope}
    
    \end{tikzpicture}
        \caption{{Evolution of the marked configuration sequence $\{\mathtt M_j\}_{j \geq 0}$. }}
        \label{fig:marked-configuration-sequence}
    \end{figure}

    Define the tree completion time of $\{ \mathtt M_j \}_{j \geq 0}$ by
    \begin{align}\label{defn:tree completion time}
        T_{\text{tree}} = \min \{ t \geq 1 : \mathtt n(\mathtt M_j) = \mathtt n(\mathtt M_t)\mbox{ for }j \geq t\} ,
    \end{align}
    and the one-arm domination time of $\{ \mathtt M_j \}_{j \geq 0}$ by
    \begin{align}\label{defn:one-arm domination time}
        T_r = \min \{ t \geq 1 : \mathtt C_l(\{ \mathtt M_j \}_{j \geq 0}) = \mathtt C_t(\{ \mathtt M_j \}_{j \geq 0}) \mbox{ for } l\geq t \}.
    \end{align}       
  
    \begin{proposition}\label{prop-equivalence-of-marks-and-chl}
    Suppose $\{ \mathtt M_j\}_{j \geq 0}$ is the marked configuration sequence coupled to a $\text{CHL}_1$ on $\mathbb T_1 \times \mathbb R_+$ with the Poisson point process $\mathcal A = \{ (t_j, x_j): t_j\mbox{ is increasing}\}$. Then we have the following facts:
    \begin{itemize}
    \item[(1)]For any color $a$ that ever occurs in $\{ \mathtt M_l\}_{l \geq 0}$, and every $j \geq \mathsf{Gen}(a)$, there is a unique tree that we denote by $\mathsf{Tree}_a(j)$ that satisfies
    \begin{equation}\label{eq-ref-Mj-trees}
        \mathtt M_j(a)=\mathcal C_{t_j}^{-1}\big(\mathsf{Tree}_a(j)\big)\cap\mathbb T_1\,, \quad \mathsf{Tree}_a(j) \subseteq \mathsf{Tree}_a(j+1)
    \end{equation}
    in the $\text{CHL}_1$ coupled by $\{ \mathtt M_j\}_{j \geq 0}$. In this case, we say the tree $\mathsf{Tree}_a(j)$ is coupled with the interval $\mathtt M_j(a)$ (at time $j$), and the tree $\mathsf{Tree}_a(j)$ has color $a$.
    \item[(2)]If $\mathtt M_j =  [(I_1 ,\ldots , I_k),(1,\ldots , k)]$ and $x_{j+1} \in I_l$ for some $1 \leq l \leq k$, then the particle arriving at $t_{j+1}$ is attached to $\mathsf{Tree}_l(j)$ in the $\text{CHL}_1$ coupled by $\{ \mathtt M_j\}_{j \geq 0}$;
    \item[(3)]$t_{T_{\text{tree}}}=\omega_{N, \lambda}, t_{T_r}=\upsilon_{N, \lambda}$. 
    \end{itemize}
    \end{proposition}
    \begin{proof}
    We prove by a strengthened induction on $j$ that shows (1) and (2) holds for every $j \geq 0$. The case \(j=0\) is vacuous since no non-zero color is present. Assume that it holds at time $t_j$ for some $j\geq 0$. If $x_{j+1}\in\mathtt M_j(l)$ for some non-zero color $l$, then $x_{j+1} \in \mathcal C_{t_j}^{-1}\big(\mathsf{Tree}_l(j)\big)\cap\mathbb T_1$ by our induction hypothesis. Therefore, the new slit at $(t_{j+1},x_{j+1})$ is attached to $\mathsf{Tree}_l(j)$, and $\mathsf{Tree}_l(j)$ is enlarged to $\mathsf{Tree}_l(j) \cup \mathcal C_{t_j}(\bigl[x_{j+1},x_{j+1}+\tfrac{i\lambda}{N}\bigr])$, which is one of the trees of the corresponding $\operatorname{CHL}_1$ at time $t_{j+1}$. Therefore, if we define $\mathsf{Tree}_l(j+1) = \mathsf{Tree}_l(j) \cup \mathcal C_{t_j}(\bigl[x_{j+1},x_{j+1}+\tfrac{i\lambda}{N}\bigr]) \supseteq \mathsf{Tree}_l(j)$, then we have
    \begin{equation*}
        \mathcal C_{t_{j+1}}^{-1}\big(\mathsf{Tree}_l(j+1)\big) \cap \mathbb T_1 ={S_{x_{j+1}}^{-1}\big(\mathtt M_j(l)\cup\bigl[x_{j+1},x_{j+1}+\tfrac{i\lambda}{N}\bigr]\big)
        =S_{x_{j+1}}^{\mathtt{inv}}(\mathtt M_j(l)).}
    \end{equation*}
    also, for any color $a \neq l$ that present at time $t_j$, if we define $\mathsf{Tree}_a(j+1) = \mathsf{Tree}_a(j)$ we have
    \begin{equation*}
        \mathcal C_{t_{j+1}}^{-1}\big(\mathsf{Tree}_a(j+1)\big) \cap \mathbb T_1 =S_{x_{j+1}}^{-1}\big(\mathtt M_j(a)\big)
        =S_{x_{j+1}}^{\mathtt{inv}}(\mathtt M_j(a)).
    \end{equation*}
    Therefore, we always have \eqref{eq-ref-Mj-trees} at time $t_{j+1}$ by the first case of \eqref{formula of Mark configuration sequence}. 
    
    If $x_{j+1}\in\mathtt M_j(0)$, then for each color $a$ that is present at time $t_j$, we have $x_{j+1} \notin \mathcal C_{t_j}^{-1}\big(\mathsf{Tree}_a(j)\big)\cap\mathbb T_1$ by our induction hypothesis. Thus, the new slit at $(t_{j+1},x_{j+1})$ creates a new boundary tree $\mathcal C_{t_j}(\bigl[x_{j+1},x_{j+1}+\tfrac{i\lambda}{N}\bigr])$ with $\mathcal C_{t_j}(x_{j+1}) \in \mathbb T_1$. Suppose $\mathtt n(\mathtt M_j) = k$ and denote this tree by $\mathsf{Tree}_{k+1}(j+1)$. Then we have
    \begin{align*}
    \mathcal C_{t_{j+1}}^{-1}\big(\mathsf{Tree}_{k+1}(j+1)\big) \cap \mathbb T_1 &= S_{x_{j+1}}^{-1}\bigl(\bigl[x_{j+1},x_{j+1}+i\lambda/N\bigr]\bigr) \cap \mathbb T_1=S_{x_{j+1}}^{\mathtt{inv}}(\{x_{j+1}\})\,.
    \end{align*}
    Similarly, for any color $1 \leq a \leq k$, color $a$ is present at time $t_j$. Thus, if we define $\mathsf{Tree}_a(j+1) = \mathsf{Tree}_a(j)$ we have
    \begin{equation*}
        \mathcal C_{t_{j+1}}^{-1}\big(\mathsf{Tree}_a(j+1)\big) \cap \mathbb T_1 =S_{x_{j+1}}^{-1}\big(\mathtt M_j(a)\big)
        =S_{x_{j+1}}^{\mathtt{inv}}(\mathtt M_j(a)).
    \end{equation*}
    Therefore, we always have \eqref{eq-ref-Mj-trees} at time $t_{j+1}$ by the second case of \eqref{formula of Mark configuration sequence}. Thus, we have proved (1) and (2). 
    
    To prove (3), note that the growing color at step $j+1$ is exactly the color of the tree hit at time $t_{j+1}$. Thus the last creation of a non-zero color is $T_{\text{tree}}$, and therefore the last time a new tree is created. Also, $T_r$ is the minimal integer $h$ such that $\{\mathcal C_l(\{\mathtt M_j\}_{j \geq 0})\}_{l \geq 1}$ is eventually a constant color $a_*$ from $l = h$. Since the infinite tree is unique, the infinite tree is $\cup_{j \geq \mathsf{Gen}(a_*)}\mathsf{Tree}_{a_*}(j)$. Therefore, $t_{T_r}$ is the minimal time where the infinite tree grows at time $t_{T_r}$ and no other tree grows after time $t_{T_r}$. Consequently, we have $t_{T_{\text{tree}}}=\omega_{N,\lambda}$ and $t_{T_r}=\upsilon_{N,\lambda}$, which proves (3).
    \end{proof}
 By the above proposition, for $t > t_{T_{\text{tree}}}$ no new tree {is created in the coupled $\text{CHL}_N$ process}, and for $t\geq t_{T_r}$ only one tree will {receive newly attached particles} on the coupled $\text{CHL}_N$. Therefore, we can study the one-arm domination time of $\text{CHL}_N$ by studying the marked configuration sequence generated by an i.i.d.\ sequence $\{ x_j \}$ with $x_1 \sim \mathsf{Unif}(\mathbb T_1)$.
 
Next, given an i.i.d.\ sequence $\{ x_j \}_{j \geq 1}$ with $x_1 \sim \mathsf{Unif}(\mathbb T_1)$ and a marked configuration sequence $\{ \mathtt M_j \}_{j \geq 0}$ generated by $\{ x_j \}_{j \geq 1}$, we shall analyze the expectation of $T_r$. Our proof is based on the analysis of the following procedure on $(\{ \mathtt M_j \}_{j \geq 0} , \{x_j \}_{j \geq 1})$ which yields an upper bound of $T_r$:
\begin{enumerate}[
    label=\textbf{Step \arabic*.},
    leftmargin=3.2em,
    labelwidth=2.6em,
    labelsep=0.6em,
    align=left,
    itemsep=0.4ex,
    topsep=0.6ex,
    parsep=0pt
]
    \item Initialize the whole process by setting the current number of particles $\mathtt{num} \leftarrow 1$, the state-recording variables $\tau \leftarrow 1 ,\mathtt{NUM} \leftarrow 0$, and then we begin the next steps. 
    
    \item Note that
    \begin{align}\label{eq-[x-2arc,x+2arc]}
        S_{x}^{\mathtt{inv}}(\{x\})=\left[x -2\arctan\tfrac{\delta}{\sqrt{1-\delta^2}},  x +2\arctan\tfrac{\delta}{\sqrt{1-\delta^2}}\right]
    \end{align}
    and
    \begin{align*}
        4\arctan\frac{\delta}{\sqrt{1-\delta^2}}>\delta,
    \end{align*}
    i.e. for $l \geq 1$ and $c_l := \mathtt C_l(\{M_j\}_{j \geq 0})$, the tree $\mathsf{Tree}_{c_l}(l)$ is always coupled with an interval longer than $\delta$ in the marked configuration. Therefore, for any $j \geq 1$, we can pick an interval $I$ with $|I|=\delta$ and a color $k\in\mathbb{N}^*$ such that $I\subset \mathtt M_j(k)$.
    
    Thus, we pick a new closed interval $I_*$ with $|I_*|=\delta$ and a color $1 \leq k_* \leq \mathtt n(\mathtt M_{\mathtt{num}})$ such that $I_* \subset \mathtt M_{\mathtt{num}}(k_*)$. Next, we find the minimum $\mathtt{num}'  \geq \mathtt{num} + 1$ such that we have $|S^{\mathtt{inv}}_{x_{\mathtt{num}'}}\circ \cdots \circ S^{\mathtt{inv}}_{x_{\mathtt{num}+1}}(I_*)|\in [0,\delta^3) \cup (2\pi - \delta^3, 2\pi]$. Then, we record the number of attached particles in \textbf{Step 2} by $T'_{\mathtt{NUM}, \tau} \leftarrow\mathtt{num}'  -\mathtt{num}$, where we use this notation since \textbf{Step 2} may be applied multiple times with different $(\mathtt{NUM}, \tau)$-pairs.

    \item Next, we let $\mathtt{num} \leftarrow \mathtt{num}'$ and split into two cases. Set
    \begin{align}
    E_{\mathtt{NUM}, \tau} \leftarrow \{|S^{\mathtt{inv}}_{x_{\mathtt{num}'}}\circ \cdots \circ S^{\mathtt{inv}}_{x_{\mathtt{num}+1}}(I_*)| > 2\pi - \delta^3 \}\,,
    \end{align}
    if $E_{\mathtt{NUM}, \tau}^c$ happens, we set $\tau \leftarrow \tau + 1$ and return to \textbf{Step 2}. Otherwise, $E_{\mathtt{NUM}, \tau}$ happens and in this case we record the number of attached particles in this session by
    \begin{align}\label{hat T, 1}  
        \widehat{{T}}_{\mathtt{NUM}} \leftarrow T_{\mathtt{NUM}, 1} '+ \ldots + T_{\mathtt{NUM}, \tau}'\,,
    \end{align}
    and then we go to \textbf{Step 4}.
    
    \item After \textbf{Step 3}, the unique color $\mathtt m$ such that
    \begin{align*}
        |\mathtt M_{\mathtt{num}} (\mathtt m)|>2\pi -\delta^3
    \end{align*}
    is determined. Define
    \begin{align}\label{eq:def-F-NUM}
    F_{\mathtt{NUM}} = \{\mbox{exists the smallest }\tilde{T}\in N^*\,, x_{\mathtt{num}+\tilde{T}}\in \mathbb{T}_1\setminus \mathtt M_{\mathtt{num}+\tilde{T} -1} (\mathtt m)\}\,.
    \end{align}
     If $F_{\mathtt{NUM}}$ happens, we set $\tilde{T}$ the smallest possible $\tilde{T}$ in the definition of $F_{\mathtt{NUM}}$ and set $\tilde{T}_{\mathtt{NUM}} \leftarrow \tilde{T}$. Then, we set $
    \mathtt{num}\leftarrow \mathtt{num}+\tilde{T}$, $\mathtt{NUM} \leftarrow \mathtt{NUM} + 1$, $\tau \leftarrow 1$ and return to \textbf{Step 2}. Otherwise, for every $h\in\mathbb N^*$,
    \[
        x_{\mathtt{num}+h}\in \mathtt M_{\mathtt{num}+h-1}(\mathtt m).
    \]
    Thus all subsequent particles are attached to the single color $\mathtt m$, which yields {a} one-arm domination. We then terminate the process and record $\mathtt{NUM}\,, \{\widehat{{T}}_{k}\}_{0 \leq k \leq \mathtt{NUM}}$ and $\{\tilde{{T}}_{k}\}_{0 \leq k \leq \mathtt{NUM}-1}$.
\end{enumerate}

By our construction of $\mathtt{NUM}$, $\{\widehat{{T}}_{k}\}_{0 \leq k \leq \mathtt{NUM} }$ and $\{\tilde{{T}}_{k}\}_{0 \leq k \leq \mathtt{NUM} - 1}$ in the above steps, the one-arm domination time $T_r$ of $\{ \mathtt M_j\}_{j\geq 0}$ is bounded by
    \begin{align}\label{eq:one-arm domination time}
        T_r\le 1+\sum_{k=0}^\mathtt{NUM}\widehat{{T}}_{k}+\sum_{k=0}^{\mathtt{NUM}-1}\tilde{{T}}_{k},
    \end{align}
 where $\mathtt{NUM}$, $\{\widehat{{T}}_{k}\}_{0 \leq k \leq \mathtt{NUM} }$ and $\{\tilde{{T}}_{k}\}_{0 \leq k \leq \mathtt{NUM} - 1}$ are a.s. finite random variables determined by $\{ x_k \sim \mathsf{Unif}(\mathbb T_1) \,,\, x_k\ \mbox{i.i.d.}\}_{k \geq 1}$. {It remains to prove} $\E[T_r]\le  C'_4\delta^{-3}$. We first bound the expectation of $\widehat{{T}}_{0}$.

\begin{lemma}\label{lem-bound-T1-prime}
There exists a constant $C'_3$ such that $\E[\widehat{{T}}_{0}] \leq C'_3 \delta^{-3}$. 
\end{lemma}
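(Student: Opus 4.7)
The plan is to decompose $\widehat{T}_0 = \sum_{j=1}^{\tau(0)} T'_{0,j}$ and bound the two factors separately. By construction, each session starts from an interval of length exactly $\delta$, so by the strong Markov property the session lengths $\{T'_{0,j}\}$ are i.i.d.\ and the session outcomes (``bottom'' if $|I| < \delta^3$ first, ``top'' if $|I| > 2\pi - \delta^3$ first) are i.i.d.\ Bernoulli. Applying optional stopping to the martingale $\{|I_k|\}$ of Proposition~\ref{geometric properties of CHL} on a single session, the probability of ``top'' is $\asymp \delta/(2\pi)$, so $\tau(0)$ is geometric with $\E[\tau(0)] \asymp \delta^{-1}$. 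A standard Wald-type identity (conditioning on the outcomes of sessions $1,\ldots,k-1$ and using that session $k$ is independent of the past) then gives $\E[\widehat{T}_0] = \E[\tau(0)]\,\E[T'_{0,1}]$, so it suffices to show $\E[T'_{0,1}] = O(\delta^{-2})$.

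A direct application of Lemma~\ref{lem-harmonic-estimations-quadratic} with $a=\delta^3,\, b=2\pi-\delta^3,\, x=\delta$ only gives $O(\delta^{-4})$ because the denominator collapses to $\delta^5$, so I would split $T'_{0,1}$ into two phases. \emph{Phase A:} bound the first exit time $\rho$ of $[\delta/10,\, 2\pi - \delta/10]$ starting from $|I_0|=\delta$. Here Lemma~\ref{lem-harmonic-estimations-quadratic} with $a=\delta/10,\, b=2\pi-\delta/10$ gives $(x-a)(b-x) = O(\delta)$ in the numerator and a denominator $\asymp \delta^3$, yielding $\E[\rho] = O(\delta^{-2})$. \emph{Phase B:} from $|I_\rho|$, which WLOG lies in $[0,\delta/10]$, drive the chain to the ultimate stopping region. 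Applying Lemma~\ref{lem-properties-mathcal-P}(\ref{lem 3.2 5}) via the strong Markov property at $\rho$, the first subsequent time $\sigma_*$ at which $|I| \leq \delta^3$ or $|I|$ strictly increases satisfies $\E[\sigma_* - \rho] \leq \delta^{-2}$. In the ``increase'' case, Lemma~\ref{lem-properties-mathcal-P}(\ref{lem 3.2 2}) bounds the new length by $4\arctan(\delta/\sqrt{1-\delta^2}) + (1-\delta^2/2)(\delta/10) \leq 5\delta$, so another application of Lemma~\ref{lem-harmonic-estimations-quadratic} with $a = \delta/10,\, b = 2\pi - \delta/10$ returns the chain to the boundary region in $O(\delta^{-2})$ additional expected steps.

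To close Phase B I would invoke Lemma~\ref{lem-properties-mathcal-P}(\ref{lem 3.2 4}): whenever $|I| \leq \delta/10$, the probability that $\{|I_{k+j}|\}_{j\geq 0}$ decreases monotonically (and thus first exits at $[0,\delta^3]$) is $\geq 1/2$, and the analogous statement in $[2\pi - \delta/10,\, 2\pi]$ follows by the symmetry between an interval and its complement. Therefore the number of ``bounce-back'' excursions in Phase B is stochastically dominated by a geometric random variable with parameter $1/2$, each contributing $O(\delta^{-2})$, so $\E[T'_{0,1}] = O(\delta^{-2})$ and hence $\E[\widehat{T}_0] = O(\delta^{-1}) \cdot O(\delta^{-2}) = O(\delta^{-3})$. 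The main obstacle is Phase B: one must stitch together the estimates of Lemma~\ref{lem-properties-mathcal-P}(\ref{lem 3.2 2}),(\ref{lem 3.2 4}),(\ref{lem 3.2 5}) across the stopping times $\rho$ and $\sigma_*$ so that the strong Markov property is applied correctly at each excursion boundary, and in particular that the geometric dominance of the number of excursions holds uniformly over the re-entry distribution into $[0,\delta/10]$.
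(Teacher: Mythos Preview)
Your overall structure---Wald's identity $\E[\widehat{T}_0] = \E[\tau]\,\E[T'_{0,1}]$, optional stopping for $\E[\tau] = O(\delta^{-1})$, and then bounding $\E[T'_{0,1}] = O(\delta^{-2})$---matches the paper's exactly. The difference lies in how the bound $\E[T'_{0,1}] = O(\delta^{-2})$ is justified.

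The paper simply cites Lemma~\ref{lem-harmonic-estimations-quadratic} with $a = \delta^3$, $b = 2\pi - \delta^3$, $x = \delta$ for this step, but as you correctly observe, the upper bound there has denominator $\tilde{C_1}(a\delta^2 \wedge (2\pi-b)\delta^2 \wedge \delta^3) = \tilde{C_1}\delta^5$ for these parameters, yielding only $O(\delta^{-4})$. Your Phase~A/Phase~B decomposition---applying Lemma~\ref{lem-harmonic-estimations-quadratic} on the bulk interval $[\delta/10,\,2\pi-\delta/10]$ and stitching it with the short-interval estimates of Lemma~\ref{lem-properties-mathcal-P}(\ref{lem 3.2 2}),(\ref{lem 3.2 4}),(\ref{lem 3.2 5}) plus a geometric bound on the number of excursions---is a correct and natural way to recover the $O(\delta^{-2})$ bound. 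In particular, your use of (\ref{lem 3.2 4}) to control the number of returns is legitimate: on the event that $\{|I_{k+j}|\}_{j\geq 0}$ is decreasing (probability $\geq 1/2$ whenever $|I_k|\leq \delta/10$), the stopping time $\sigma_*$ of (\ref{lem 3.2 5}) necessarily terminates at the $\delta^3$ threshold; the symmetry $I \leftrightarrow \mathbb T_1 \setminus I$ handles exits on the high side, and (\ref{lem 3.2 2}) caps the overshoot after an increase at $O(\delta)$, so each excursion costs $O(\delta^{-2})$.

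In short, your argument is correct and supplies the detail where the paper's one-line invocation of Lemma~\ref{lem-harmonic-estimations-quadratic} is, as written, insufficient for the claimed exponent.
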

\begin{proof}
{By construction and the strong Markov property, each interval exploration in \textbf{Step 2} starts from an interval of length $\delta$, and the future driving points are  i.i.d. uniform on $\mathbb T_1$. Hence the pairs $(T'_{0,i},\mathbf 1_{E_{0,i}})_{i\ge1}$ are i.i.d.. The second estimate in Lemma~\ref{lem-harmonic-estimations-quadratic} gives}
\begin{align}\label{eq-expectation-T-prime-repaired}
    \mathbb{E}[T'_{0,1}]\le C_{*}\delta^{-2} \mbox{ for some constant }C_*\,.
\end{align}
{Next, note that for any $\{J_j\}_{j\ge0}\sim\mathcal P_{J_0}$ with $|J_0|=\delta$, set $\sigma_0=\inf\{k:|J_k|<\delta^3\ \text{or}\ |J_k|>2\pi-\delta^3\}$ and $E=\{|J_{\sigma_0}|>2\pi-\delta^3\}$. Since $|J_k|$ is a bounded martingale, optional stopping theorem gives}
\begin{align*}
    {\delta=\mathbb{E}[|J_{\sigma_0}|]
    =\mathbb{P}(E)\mathbb{E}[|J_{\sigma_0}|\mid E]+(1-\mathbb{P}(E))\mathbb{E}[|J_{\sigma_0}|\mid E^c].}
\end{align*}
{Using $2\pi-\delta^3\le |J_{\sigma_0}|\le2\pi$ on $E$ and $0\le |J_{\sigma_0}|\le\delta^3$ on $E^c$, for all sufficiently small $\delta$ we have}
\begin{align*}
    c\delta\le \P(E)\le C\delta \mbox{ for some constants }c, C\,,
\end{align*}
therefore, by the construction of $\{E_{0,i}\}_{i \geq 1}$ in our procedure, we have 
\begin{align}\label{eq-success-prob-repaired}
    c\delta\le \mathbb{P}(E_{0,i})\le C\delta \mbox{ for all }i \geq 1\,,
\end{align}
{Thus $\tau=\inf\{i\ge1:E_i\ \mathrm{occurs}\}$ is stochastically dominated by $\mathsf{Geometric}(c\delta)$, and}
\begin{align*}
    {\mathbb{E}[\widehat T_0]=\mathbb{E}\left[\sum_{i=1}^{\tau}T'_{0,i}\right]
    =\sum_{i\ge1}\mathbb{E}\left[T'_{0,i}\mathbf 1_{\{\tau\ge i\}}\right]=\sum_{i\ge1}\mathbb{P}(\tau\ge i)\mathbb{E}[T'_{0,1}]
    =\frac{\mathbb{E}[T'_{0,1}]}{c\delta}\le C'_3\delta^{-3},}
\end{align*}
{where we used that $\{\tau\ge i\}$ depends only on the first $i-1$ interval explorations and is independent of $T'_{0,i}$.}
\end{proof}
{Now we are ready to prove $\E[T_r]\le  C'_4\delta^{-3}$.} 
\begin{lemma}\label{lem-bound-Tr}
There exists a constant $C'_4$ such that $\E[T_r] \leq C'_4 \delta^{-3}$. 
\end{lemma}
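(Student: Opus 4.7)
The plan is to combine the deterministic upper bound \eqref{eq:one-arm domination time} with a geometric tail for $\mathtt{TEMP}$ and uniform-in-$k$ estimates on $\E[\widehat T_k \mathbf 1_{\mathtt{TEMP} \geq k}]$ and $\E[\tilde T_k \mathbf 1_{\mathtt{TEMP} > k}]$, obtained via the strong Markov property of the i.i.d.\ driving sequence $\{x_j\}$.

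First I would prove $\P(\mathtt{TEMP} \geq k) \leq (100\delta^2)^k$. At the end of each \textbf{Step3}, a unique color $\mathtt a$ satisfies $|\mathtt M_{\mathtt c}(\mathtt a)| > 2\pi - \delta^3$, and its complement $J_0 = \mathbb T_1 \setminus \mathtt M_{\mathtt c}(\mathtt a)$ is a single arc of length below $\delta^3 \leq \delta/10$. Lemma~\ref{lem-properties-mathcal-P}(\ref{lem 3.2 4}) applied to $J_0$ with $a = \delta^3$ gives probability at least $1 - 100\delta^{-1}\delta^3 = 1 - 100\delta^2$ that $\{|J_j|\}_{j \geq 0}$ is strictly decreasing forever, and by Lemma~\ref{lem-properties-mathcal-P}(\ref{lem 3.2 1}) this event coincides with the event that every future particle lands inside $\mathtt M(\mathtt a)$, i.e.\ termination at the current iteration. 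Iterating this bound through the strong Markov property yields the desired geometric tail.

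Second, I would control the two sums uniformly in $k$. Since each reinitialization in \textbf{Step2} picks a fresh $I_*$ with $|I_*| = \delta$ depending only on the current configuration, the strong Markov property implies that the conditional law of $\widehat T_k$ given $\{\mathtt{TEMP} \geq k\}$ coincides with the unconditional law of $\widehat T_0$, so Lemma~\ref{lem-bound-T1-prime} gives
\[
\E[\widehat T_k \mathbf 1_{\mathtt{TEMP} \geq k}] \leq C'_3\delta^{-3}(100\delta^2)^k.
\]
For $\tilde T_k$, iterating Lemma~\ref{lem-properties-mathcal-P}(\ref{lem 3.2 1}) along the non-escape path gives $|J_j| \leq (1-\tfrac{1}{2}\delta^2)^j \delta^3$ on $\{\tilde T_k > j\}$, so that $\P(\tilde T_k = j \mid \mathtt{TEMP} \geq k) \leq (1-\tfrac{1}{2}\delta^2)^{j-1}\delta^3/(2\pi)$; summing $\sum_j j(1-\tfrac{1}{2}\delta^2)^{j-1} = 4\delta^{-4}$ and multiplying by $\P(\mathtt{TEMP}\geq k)$ yields
\[
\E[\tilde T_k \mathbf 1_{\mathtt{TEMP} > k}] \leq \tfrac{2}{\pi\delta}(100\delta^2)^k.
\]

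Finally, plugging these into \eqref{eq:one-arm domination time} and summing the two geometric series in $k$ (valid for $\delta$ small enough that $100\delta^2 < 1$) yields
\[
\E[T_r] \leq 1 + \tfrac{C'_3 \delta^{-3}}{1-100\delta^2} + \tfrac{2/(\pi\delta)}{1-100\delta^2} = O(\delta^{-3}),
\]
giving the lemma with some constant $C'_4 > 0$. The main delicate point will be justifying the strong Markov reinitialization between iterations: one must ensure that the choice of $(I_*, k_*)$ in \textbf{Step2} is a deterministic function of the current $\mathtt M_{\mathtt c}$, so that the sub-process spawned by each reinitialization is indeed an independent copy of the original $\widehat T_0$ game; once this is in place, the remaining estimates are bookkeeping of geometric series.
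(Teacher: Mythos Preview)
Your proposal is correct and follows essentially the same approach as the paper: both establish the geometric tail $\P(\mathtt{TEMP}\geq k)\leq (100\delta^2)^k$ via Lemma~\ref{lem-properties-mathcal-P}(\ref{lem 3.2 4}), use the strong Markov property to control $\widehat T_k$ by Lemma~\ref{lem-bound-T1-prime}, bound $\tilde T_k$ via the exponential shrinking in Lemma~\ref{lem-properties-mathcal-P}(\ref{lem 3.2 1}), and sum the resulting geometric series in \eqref{eq:one-arm domination time}. The only cosmetic difference is that the paper phrases the $\widehat T_k$ estimate as a Wald identity (asserting the $\widehat T_k$ are i.i.d.\ and computing $(1+\E[\mathtt{TEMP}])\E[\widehat T_0]$), whereas you compute $\E[\widehat T_k\mathbf 1_{\mathtt{TEMP}\geq k}]$ term by term; your formulation is arguably cleaner since it sidesteps the issue that $\widehat T_k$ is only defined on $\{\mathtt{TEMP}\geq k\}$.
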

\begin{proof}
    Recall \textbf{Step 1} to \textbf{Step 4}. Once our procedure enters \textbf{Step 4}, the variable \(\mathtt{NUM}\) is added by $1$ if and only if $F_{\mathtt{NUM}}$ happens, which has probability bounded by $1-(1-4\delta^{-1}\cdot \delta^{3}) = 4\delta^2$ by Item (6) of Lemma~\ref{lem-properties-mathcal-P}. Also, by strong Markov property we have $\{\mathbf 1_{F_i}\}_{i \geq 1}$ are i.i.d. random variables, which gives
\begin{align}\label{eq:X,t}
\P(\mathtt{NUM}\ge k)\leq (4\delta^{2})^k \,,\quad k\ge 0\,.
\end{align}
Hence, $\mathbb{E}[\mathtt{NUM}]\le 2$ for all sufficiently small $\delta$. Moreover, after a successful domination attempt, the exceptional complement has length at most \(\delta^3\). Moreover, if we write $\tilde{T} = +\infty$ when $\tilde{T}$ does not exist in \textbf{Step 4}, then for any $k \geq 0$ we have $|\mathtt M_{\mathtt{num} + k}(\mathtt m)| \geq 2\pi- (1-\frac{1}{2}\delta^2)^k \delta^3$ conditioned on $\tilde{T} > k$ by Item (1) of Lemma~\ref{lem-properties-mathcal-P}. Therefore, we have that for $k\geq 1$
\begin{equation}
\P(\tilde{T}_{j} = k \mid \tilde{T}_{j} > k - 1)\leq \tfrac{1}{2\pi}\bigl(1-\tfrac{1}{2}\delta^2\bigr)^{k-1} \delta^{3}\,, 
\end{equation}
which gives
\begin{align}\label{eq:tilde-T-i-upper-bound}
\P(\tilde{T}_{j} = k) = \P(\tilde{T}_{j} = k \,, \tilde{T}_{j} > k - 1) \leq \P(\tilde{T}_{j} = k \mid \tilde{T}_{j} > k - 1)\leq \tfrac{1}{2\pi}\bigl(1-\tfrac{1}{2}\delta^2\bigr)^{k-1} \delta^{3}.
\end{align}
Consequently, we have
\begin{align}\label{expectation-temp,T0}
    {\mathbb{E}[\mathtt{NUM}]\leq \sum_{k \geq 1}(4\delta^2)^k < 1 \,, \quad \mathbb{E}[\tilde{T}_{j}\mathbf 1_{\{\tilde{T}_{j} < +\infty\}}]\le \sum_{k \geq 1}\frac{k\bigl(1-\tfrac{1}{2}\delta^2\bigr)^{k-1}\delta^3}{2\pi} =\frac{2}{\pi\delta}.}
\end{align}

{Note that $\{\widehat{{T}}_{k}\}_{0 \leq k \leq \mathtt{NUM} }$ and $\{\tilde{{T}}_{k}\mathbf 1_{\{\tilde{T}_k < +\infty\}}\}_{0 \leq k \leq \mathtt{NUM}}$ are i.i.d. random variables, and $\mathtt{NUM}$ is an explosion time adapted to $\{\tilde{{T}}_{k}\mathbf 1_{\{\tilde{T}_k < +\infty\}}\}_{k \geq 0}$. Therefore, combining \eqref{expectation-temp,T0} and Lemma~\ref{lem-bound-T1-prime} with \eqref{eq:one-arm domination time}, there exists a constant $C'_4>0$ such that}
\begin{align*}
    \mathbb{E}[T_r]&\leq 1+\mathbb{E}\left[\sum_{k=0}^\mathtt{NUM}\widehat{{T}}_{k}\right]+\mathbb{E}\left[\sum_{k=0}^{\mathtt{NUM}}\tilde{T}_{k} \mathbf 1_{\{\tilde{T}_{k}< +\infty\}}\right]\le 1+(1+\mathbb{E}[\mathtt{NUM}])(\mathbb{E}[\widehat{T}_0]+\mathbb E[\tilde{T}_0\mathbf 1_{\{ \tilde{T}_0 < + \infty\}}])\\
    &\le C'_4\delta^{-3}\,,
\end{align*}
which completes the proof.
\end{proof}
Now we return to the upper bound of $\E[\upsilon_{N, \lambda}]$ in Theorem~\ref{thm: one-arm domination time}. Note that we have
    \begin{align*}
        \E[\upsilon_{N, \lambda}]=\E[t_{T_r}]=\frac{\E[T_r]}{2\pi N}\le C\lambda^{-3}N^2\,,
    \end{align*}
    where the first equality holds by Item (3) of Proposition~\ref{prop-equivalence-of-marks-and-chl}, the second equality holds by \eqref{eq:t_T and T}, and the first inequality holds by Lemma~\ref{lem-bound-Tr} and $\delta = O(\frac{\lambda}{N})$. Therefore, we obtain the upper bound of $\mathbb{E}[\upsilon_{N, \lambda}]$ in Theorem \ref{thm: one-arm domination time}.

\section{Tail of one-arm domination time}\label{sec:Tail of one-arm domination time}
{This section strengthens the expectation result to the exponential tail bound in Theorem \ref{thm:tail of one-arm domination time}. The lower tail follows from the fact that a semicircular interval can remain in an interior range, while the upper tail iterates the marked configuration mechanism. The strong Markov property converts these estimates into exponential bounds.}

\begin{proof}[Proof of Theorem \ref{thm:tail of one-arm domination time}] We first prove the  lower bound.  
Consider $I^{(\theta)}=[0,\theta), \frac{2}{3}\pi\le \theta\le \frac{4}{3}\pi$. Given the Poisson point process $\{(x_i,t_i) : t_1 < t_2 < \ldots\}$. Denote $I^{(\theta)}_{k} := \chl_{t_k}^{\mathtt{inv}}(I^{(\theta)})$. 
Define 
\begin{align*}
    \eta^{(\theta)}&:=\inf\left\{k\in\mathbb{N}^*: |I^{(\theta)}_{k}|<\theta-\frac{\pi}{3}\ \text{or}\ |I^{(\theta)}_{k}|>\theta+\frac{\pi}{3}\right\},\\
    \sigma'^{(\theta)}&:=\inf\left\{k\in \mathbb{N}^*: |I^{(\theta)}_{k}|<\frac{\pi}{3}\ \text{or}\ |I^{(\theta)}_{k}|>\frac{5\pi}{3}\right\}.
\end{align*}
{Set $n_\delta:=\left\lfloor(1000\delta^3)^{-1}\right\rfloor$.} For sufficiently large $N$, with Corollary \ref{cor:lem-harmonic-estimations-quadratic},
\begin{align*}
    {\frac{\pi^3}{97\delta^3}\le \mathbb{E}[\eta^{(\theta)}]\le \frac{\pi^3}{95\delta^3}.}
\end{align*}
{Moreover, by the strong Markov property and the upper bound in Corollary \ref{cor:lem-harmonic-estimations-quadratic},}
\begin{align*}
    {\mathbb E\big[\eta^{(\theta)}\mid \eta^{(\theta)}>n_\delta\big]\le n_\delta+\frac{\pi^3}{95\delta^3}.}
\end{align*}
So
\begin{align*}
    {\frac{\pi^3}{97\delta^3}}\le\mathbb{E}[\eta^{(\theta)}]=&\mathbb{E}
    \left[\eta^{(\theta)}\mid \eta^{(\theta)}> n_\delta\right]\mathbb{P}\left(\eta^{(\theta)}> n_\delta\right)+\mathbb{E}
    \left[\eta^{(\theta)}\mid \eta^{(\theta)}\le n_\delta\right]\mathbb{P}\left(\eta^{(\theta)}\le n_\delta\right)\\
    \le& \left( n_\delta+{\frac{\pi^3}{95\delta^3}}\right)\mathbb{P}\left(\eta^{(\theta)}> n_\delta\right)+ n_\delta\mathbb{P}\left(\eta^{(\theta)}\le n_\delta\right),
\end{align*}
which implies that 
\begin{align}\label{lower bound of hitting time}
    \mathbb{P}\left(\eta^{(\theta)}> n_\delta\right)\ge 0.9.
\end{align}
{We first prove the following proposition.}
\begin{proposition}\label{prop: interval greater than average}
    For any $k\in \mathbb{N}^*$, 
    \begin{align*}
        \mathbb{P}\left(|I^{\left(\frac{2}{3}\pi\right)}_k|\ge \frac{2}{3}\pi\right)\ge \frac{1}{3}.
    \end{align*}
\end{proposition}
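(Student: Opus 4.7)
The plan is to exploit rotational symmetry of the process together with a partition argument on $\mathbb{T}_1$. Partition $\mathbb{T}_1$ into three arcs of equal length $\tfrac{2}{3}\pi$: $I_1=I^{(\tfrac{2}{3}\pi)}=[0,\tfrac{2}{3}\pi)$, $I_2=[\tfrac{2}{3}\pi,\tfrac{4}{3}\pi)$, and $I_3=[\tfrac{4}{3}\pi,2\pi)$. I would first verify that for every realization of the Poisson points and every $k$, the three images $\chl_{t_k}^{\mathtt{inv}}(I_1),\chl_{t_k}^{\mathtt{inv}}(I_2),\chl_{t_k}^{\mathtt{inv}}(I_3)$ still partition $\mathbb{T}_1$, so in particular $\sum_{j=1}^3|\chl_{t_k}^{\mathtt{inv}}(I_j)|=2\pi$ deterministically.

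This partition-preservation property follows by induction on the number of applied inverse slit maps $S_x^{\mathtt{inv}}$. At each step, let $I_{i_0}$ be the unique member of the current partition containing the new Poisson point $x$. By definition $S_x^{\mathtt{inv}}(I_{i_0})$ is obtained from $S_x^{-1}(I_{i_0}\setminus\{x\})$ by inserting the slit-base arc $\bigl[x-2\arctan\tfrac{\delta}{\sqrt{1-\delta^2}},\,x+2\arctan\tfrac{\delta}{\sqrt{1-\delta^2}}\bigr]$, while $S_x^{\mathtt{inv}}(I_j)=S_x^{-1}(I_j)$ for $j\neq i_0$. Since $S_x^{-1}$ is a bijection from $\mathbb{T}_1\setminus\{x\}$ onto the complement of that slit-base arc, the three updated images again partition $\mathbb{T}_1$, completing the induction.

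Next, by rotational invariance of the uniform Poisson point process together with the equivariance $S_{x+\theta}^{\mathtt{inv}}(A+\theta)=S_x^{\mathtt{inv}}(A)+\theta$, the three random lengths $|\chl_{t_k}^{\mathtt{inv}}(I_1)|,|\chl_{t_k}^{\mathtt{inv}}(I_2)|,|\chl_{t_k}^{\mathtt{inv}}(I_3)|$ are identically distributed. Since they sum to $2\pi$ almost surely, at least one of them is at least $\tfrac{2}{3}\pi$ almost surely, and a union bound gives
\begin{equation*}
1=\mathbb{P}\Big(\max_{j\in\{1,2,3\}}|\chl_{t_k}^{\mathtt{inv}}(I_j)|\ge \tfrac{2}{3}\pi\Big)\le 3\,\mathbb{P}\big(|I^{(\tfrac{2}{3}\pi)}_k|\ge \tfrac{2}{3}\pi\big),
\end{equation*}
which is the claim. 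The only nontrivial piece of the plan is the sum/partition identity, which reduces to the observation that the slit-base arc is adjoined exactly once, namely to the image of the arc containing the new Poisson point $x$; everything else is rotational symmetry plus a one-line union bound. I do not anticipate any serious obstacle, since a direct martingale-plus-boundedness approach using only $\mathbb{E}[|I_k^{(\tfrac{2}{3}\pi)}|]=\tfrac{2}{3}\pi$ and $|I_k^{(\tfrac{2}{3}\pi)}|\in(0,2\pi)$ collapses to the trivial bound $p_k\ge 0$, so it is precisely the symmetric partition structure that must do the work.
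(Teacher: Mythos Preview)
Your proposal is correct and follows essentially the same approach as the paper: partition $\mathbb T_1$ into three equal arcs, observe that their images under $\chl_{t_k}^{\mathtt{inv}}$ still partition $\mathbb T_1$ (hence sum to $2\pi$), use pigeonhole to get one length $\ge \tfrac{2}{3}\pi$, and conclude via rotational symmetry. The only difference is that you spell out the inductive reason for the partition preservation, whereas the paper simply asserts $\sum_j |\widehat I^j_k| = 2\pi$ and $\sum_j \mathbf 1_{\{x\in \widehat I^j_k\}}=1$ as evident.
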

\begin{proof}
    Define 
    \begin{align*}
        \widehat{I}^j:=\frac{2(j-1)}{3}\pi +[0,\frac{2}{3}\pi), j=1,2,3,\qquad \widehat{I}^j_{k}:= \chl_{t_k}^{\mathtt{inv}}(\widehat{I}^j).
    \end{align*}
    {The three intervals $\widehat I^1,\widehat I^2,\widehat I^3$ partition $\mathbb T_1$ up to endpoints. Hence, for any fixed $k\in\mathbb N$,}
    \begin{align*} 
    {\sum_{j=1}^3|\widehat{I}^j_{k}|=2\pi.}
    \end{align*}
    which implies that 
    \begin{align*}
{\sum_{j=1}^3\mathbf{1}_{\{|\widehat{I}^j_{k}|\ge\frac{2}{3}\pi\}}\ge1.}
    \end{align*}
    Since $\widehat{I}^j_{k}$ {has the same distribution for different $j$, this completes the proof}.
\end{proof}
With (\ref{lower bound of hitting time}) and Proposition \ref{prop: interval greater than average}, we obtain that for any $\frac{2}{3}\pi\le\theta\le\frac{4}{3}\pi$, if $\frac{2}{3}\pi\le \theta \leq \pi$, then
\begin{align}
    {\mathbb{P}\left(\sigma'^{(\theta)}>n_\delta, |I^{(\theta)}_{n_\delta}|\in\left[\frac{2}{3}\pi, \frac{4}{3}\pi\right]\right)}&\ge {\mathbb{P}\left(\eta^{(\theta)}> n_\delta,|I^{\left(\frac{2}{3}\pi\right)}_{n_\delta}|\ge \frac{2}{3}\pi \right)}\nonumber \\
    &\ge\mathbb{P}\left(|I^{\left(\frac{2}{3}\pi\right)}_{n_\delta}|\ge \frac{2}{3}\pi \right)-\mathbb{P}\left(\eta^{(\theta)}\le n_\delta\right)\nonumber \\
    &\ge \frac{1}{3}-0.1\ge 0.2.\label{eq-case-smaller-than-pi}
\end{align}
 If $\pi\le \theta \leq \frac{4}{3}\pi$, then {by  symmetry}
\begin{align*}
    {\mathbb{P}\left(\sigma'^{(\theta)}>n_\delta, |I^{(\theta)}_{n_\delta}|\in\left[\frac{2}{3}\pi, \frac{4}{3}\pi\right]\right)\geq 0.2.}
\end{align*}
Therefore, in either case we have 
\begin{equation}\label{eq:block-lower-interior}
    {\mathbb{P}\left(\sigma'^{(\theta)}>n_\delta, |I^{(\theta)}_{n_\delta}|\in\left[\frac{2}{3}\pi, \frac{4}{3}\pi\right]\right) \geq 0.2\,,\qquad \frac{2\pi}{3}\le\theta\le\frac{4\pi}{3}.}
\end{equation}
{The estimate \eqref{eq:block-lower-interior} is uniform over the interior range. Starting from any interval whose length lies in $[2\pi/3,4\pi/3]$, the strong Markov property gives}
\begin{align*}
    {\mathbb P\left(\sigma'^{(\pi)}>m n_\delta\right)\ge (0.2)^m,\qquad m\ge1,}
\end{align*}
{Taking $m=\left\lceil \lambda^{-3}N^3x/n_\delta\right\rceil$ and using \eqref{eq-def-delta}, there exist $\widehat C_1,\widehat C_2>0$ such that}
\begin{align*}
    \mathbb{P}(\sigma'^{(\pi)}>\lambda^{-3}N^3 x)\ge \widehat{C}_1\mathrm{e}^{-\widehat{C}_2 x}.
\end{align*}
Review the definition of $\sigma'^{(\pi)}$ and $A^{(\sigma'^{(\pi)})}$ in (\ref{stopping time-sigma'}) and (\ref{event A^{(sigma')}}). {The conditional lower bound for $A^{(\sigma'^{(\pi)})}$ appearing in the proof of Lemma~\ref{lem-lower-bound-ET} is uniform at the stopping time. Therefore, by the strong Markov property,}
\begin{align*}
    \mathbb{P}(\sigma'^{(\pi)}\mathbf{1}_{A^{(\sigma'^{(\pi)})}}>\lambda^{-3}N^3 x)&=\mathbb{P}(\sigma'^{(\pi)}>\lambda^{-3}N^3 x, A^{(\sigma'^{(\pi)})})\\
    &\ge \mathbb{P}(\sigma'^{(\pi)}>\lambda^{-3}N^3 x)\times \frac{1}{7}\times\frac{1}{2}\ge \frac{1}{14}\widehat{C}_1\mathrm{e}^{-\widehat{C}_2 x}.
\end{align*}
Back to the Poisson point process $\{(x_i,t_i) : t_1 < t_2 < \ldots\} $ in the unrescaled $\operatorname{CHL}_N$, where we know that $\upsilon_{N, \lambda}=t_{T_r}$. {We use the following standard Poisson estimates for the arrival times:}
\begin{equation}\label{eq:estimate of t_k}
{
\begin{aligned}
    \mathbb P(t_k>u)\ge c_0, \ \text{if } \frac{k}{2\pi N}\ge 2u;\quad     \mathbb P(t_k>u)\le \exp(-c_1 2\pi Nu),\ \text{if } 2\pi Nu\ge 2k,
\end{aligned}}
\end{equation}
{where $c_0,c_1>0$ are universal constants. Indeed, the first bound follows from Paley--Zygmund inequality, while the second follows from Lemma \ref{lem:Chernoff Bound}.}
{Take $x=4\pi t$ and $k_0=\lceil \lambda^{-3}N^3x\rceil$. Then $\mathbb E[t_{k_0}]=k_0/(2\pi N)\ge 2\lambda^{-3}N^2t$. Hence the first bound in \eqref{eq:estimate of t_k} gives $\mathbb P(t_{k_0}>\lambda^{-3}N^2t)\ge c_0$.} Note that $\upsilon_{N, \lambda}\ge t_{{\sigma'^{(\pi)}} \mathbf{1}_{A^{(\sigma'^{(\pi)})}}}$, {so for any $t>0$}
\begin{align*}
    {\mathbb{P}(\upsilon_{N, \lambda}>\lambda^{-3} N^2t)}&{\ge\mathbb{P}(t_{k_0}>\lambda^{-3} N^2t, {\sigma'^{(\pi)}} \mathbf{1}_{A^{(\sigma'^{(\pi)})}}>k_0)}\\
    &{=\mathbb{P}(t_{k_0}>\lambda^{-3} N^2t)\mathbb{P}({\sigma'^{(\pi)}} \mathbf{1}_{A^{(\sigma'^{(\pi)})}}>k_0)}
    {\ge c_0\,\frac{1}{14}\widehat{C}_1\mathrm{e}^{-4\pi\widehat{C}_2 t}\ge C_3\mathrm{e}^{-C_4t}.}
\end{align*}
{This completes the proof of the lower bound.}

Now we consider the upper bound. We continue to use the same notation and review \textbf{Step 1} to \textbf{Step 4} in Subsection \ref{upper bound}.  Define the interval $J^{(\theta)} \subset \mathbb{T}_1$ and $|J^{(\theta)}|=\theta, \delta^3\le \theta\le 2\pi-\delta^3$. Given the Poisson point process $\{(x_j,t_j) : t_1 < t_2 < \ldots\}$. Denote $J^{(\theta)}_{k} := (\chl_{t_k})^{\mathtt{inv}}(J^{(\theta)})$. 
Define 
\begin{align*}
    \sigma^{(\theta)}:=\inf\left\{k\in \mathbb{N}^*: |J^{(\theta)}_{k}|<\delta^3\ \text{or}\ |J^{(\theta)}_{k}|>2\pi-\delta^3\right\}.
\end{align*}
\begin{lemma}\label{lem:tail-sigma-theta}
{There exists a constant $C'_5$ such that, uniformly for $\delta^3\le\theta\le2\pi-\delta^3$,}
\begin{align*}
    {\mathbb E[\sigma^{(\theta)}]\le C'_5\delta^{-3}.}
\end{align*}
\end{lemma}
\begin{proof}
{Let $D_k:=|J_k^{(\theta)}|\wedge(2\pi-|J_k^{(\theta)}|)$.  Applying the proof of Lemma \ref{lem-harmonic-estimations-quadratic} and the lower bounds in Lemma \ref{estimation of I(N-I)-1} gives}
\begin{align*}
    {\sup_{\delta/10\le D_0\le \pi}\mathbb E\big[\inf\{k:D_k\le\delta/10\}\big]\le C\delta^{-3}.}
\end{align*}
{Once $D_k\le\delta/10$, we apply Lemma \ref{lem-properties-mathcal-P} to the shorter interval, or to its complement if $2\pi-|J_k^{(\theta)}|\le\delta/10$. With probability at least $1/2$ this shorter interval decreases to length at most $\delta^3$, and each attempt has expected duration at most $C\delta^{-2}$; if the attempt fails, Lemma \ref{lem-properties-mathcal-P} returns it to length $O(\delta)$. The same strong Markov property used in Lemma \ref{lem-harmonic-estimations-quadratic} therefore gives an additional expected time at most $C\delta^{-2}$. Combining the two bounds proves the claim.}
\end{proof}
We review the definition of $T_r, \widehat{{T}}_{j}$ and $\tilde{T}_j$ in (\ref{defn:one-arm domination time}), (\ref{hat T, 1})  and \textbf{Step 4} in Subsection \ref{upper bound}. With Lemma~\ref{lem-bound-T1-prime}, 
\begin{align}\label{T-hat}
    \mathbb{E}[\widehat{{T}}_{j}]\le C'_3\delta^{-3}.
\end{align}
{Note that $\widehat{{T}}_{j}$ are i.i.d. random variables. Within one run of the above procedure, if the event $\{\widehat T_j>(r-1)s\}$ occurs, then at time $(r-1)s$ the process is either in the middle of one interval exploration or at the beginning of the next one. By the strong Markov property, the remaining expected time is bounded by the sum of the uniform bound in Lemma \ref{lem:tail-sigma-theta} and \eqref{T-hat}. So we obtain that for $r\ge1$}
\begin{align*}
    \mathbb{P}(\widehat{{T}}_{j}>rs\mid\widehat{{T}}_{j}>(r-1)s)&\le \max_{\delta^3\le \theta\le 2\pi-\delta^3}\mathbb{P}(\sigma^{(\theta)}+\widehat{{T}}_{j}>s)\\
    &\le\max_{\delta^3\le \theta\le 2\pi-\delta^3}\frac{\mathbb{E}[\widehat{{T}}_{j}]+\mathbb{E}[\sigma^{(\theta)}]}{s}
    \le \frac{\widehat{C}}{s\delta^{3}}, 
\end{align*}
which implies that
\begin{align*}
\mathbb{P}\left(\widehat{T}_j>ks\right)=\mathbb{P}(\widehat{{T}}_{j}>s)\prod_{r=2}^k\mathbb{P}(\widehat{{T}}_{j}>rs\mid\widehat{{T}}_{j}>(r-1)s)\le \left(\frac{\widehat{C}}{s\delta^{3}}\right)^k.
\end{align*}
Take $s=\widehat{C}\delta^{-3} \mathrm{e}$, we get that there exists $K>0$ such that $\mathbb{P}(\widehat{T}_j>\delta^{-3}x)\le \text{e}^{-Kx}.$ 
For any $x>0$, with (\ref{eq:X,t}) and Lemma \ref{lem:Chernoff Bound}, 
\begin{align}
\mathbb{P}\left(\sum_{j=0}^{\mathtt{NUM}}\widehat{T}_{j}>\delta^{-3}x\right)=&\mathbb{E}\left[\mathbb{P}\left(\sum_{j=0}^{\mathtt{NUM}}\widehat{T}_{j}>\delta^{-3}x\right) \Big | \mathtt{NUM}\right]\nonumber\\
\le&\sum_{k=0}^{[x]}\mathbb{P}(\mathtt{NUM}=k)\mathbb{P}\left(\sum_{j=0}^{k}\widehat{T}_j>\delta^{-3}x\right)+\mathbb{P}(\mathtt{NUM}>[x]+1)\nonumber\\
\le& \sum_{k=0}^{[x]}100\delta^{2k}\left( \dfrac{ K\text{e}x}{k+1} \right)^{k+1} \text{e}^{-Kx}+200\delta^{2x}\nonumber\\
\le& 100\text{e}^{-Kx}\sum_{k=0}^{[x]}\left( \dfrac{K\text{e}x \delta^{2}}{k+1 } \right)^{k+1}+200\delta^{2x}\nonumber\\
\le& 100\text{e}^{-Kx}\left(\sum_{k=0}^{[K\text{e}x\delta^{2}]}\left( \dfrac{K\text{e}x \delta^{2}}{k+1} \right)^{k+1}+[x]\right)+200\delta^{2x}\nonumber\\
\le& 100\text{e}^{-Kx}\left(\left( Kx e \delta^{2} \right)^{[Kxe\delta^2]+2}+[x]\right)+200\delta^{2x}
\le C_9\text{e}^{-\frac{Kx}{2}}. \label{eq:sum hat T}
\end{align}
Note that $\delta^{x}$ decays faster than $e^{-x}$. Based on the proof of (\ref{eq:sum hat T}) and with (\ref{eq:X,t}), 
\begin{align}
{\mathbb{P}\left(\sum_{j=0}^{\mathtt{NUM}-1}\tilde{T}_{j}>\delta^{-3}x\right)\le C_{10}\text{e}^{-C_{11}x}.} \label{eq:sum t}
\end{align}

Combining \eqref{eq-def-delta} ,  \eqref{eq:sum hat T} and \eqref{eq:sum t} with \eqref{eq:one-arm domination time} we derive that there exists $C_{12}, C_{13} >0$ such that
\begin{align}\label{eq:estimate of T_r}
    \mathbb{P}\left(T_r>\lambda^{-3}N^3x\right)\le C_{12}\text{e}^{-C_{13}x}.
\end{align}
 Now we take $x=\pi t$. With \eqref{eq:estimate of T_r} and \eqref{eq:estimate of t_k}, we obtain that for any $t>0$ 
\begin{align*}
    &\,\,\quad\mathbb{P}(\upsilon_{N, \lambda}>1000\lambda^{-3}N^2t)=\mathbb{P}(t_{T_r}>1000\lambda^{-3}N^2t)\\
    &\le \mathbb{P}(t_{T_r}>1000\lambda^{-3}N^2t, T_r>\pi\lambda^{-3}N^3t)
    +\mathbb{P}(t_{T_r}>1000\lambda^{-3}N^2t, T_r\le \pi\lambda^{-3}N^3t)\\
    &\le \mathbb{P}(T_r>\pi \lambda^{-3}N^3t)+\mathbb{P}(t_{\lceil \pi\lambda^{-3}N^3t\rceil}>1000\lambda^{-3}N^2t)\\
    &{\le C_{12}\text{e}^{-C_{13}\pi t}+C_{14}\text{e}^{-C_{15}\lambda^{-3}N^3t}\le C_{1}\text{e}^{-C_2 t},} 
\end{align*}
which completes the proof of  the upper bound.

\end{proof}


\section{{An upper bound for the expected tree completion time}}\label{sec:Expectation of tree completion time}
{In this section we prove Theorem \ref{thm: tree completion time} by using the marked configuration introduced in Section \ref{sec:Expectation of one-arm domination time} to track the zero-colored set, namely the boundary intervals from which no tree has yet grown, and  obtain an upper bound for the expected tree completion time.}
\begin{proof}[Proof of Theorem \ref{thm: tree completion time}]
Given the Poisson point process $\{(t_j, x_j) : t_1 < t_2 < \ldots\}$. In this section, we use the same notation in {Subsection} \ref{upper bound}. {Recalling} Definition \ref{defn:Mark configuration} and \ref{defn:Mark configuration sequence coupled}, for marked configuration sequence $\{ \mathtt M_j \}_{j \geq 0}$ coupled to the $\text{CHL}_N$, the number of colors of $\mathtt M_j$ is $\mathtt n(\mathtt M_j)$, and the tree completion time of $\{ \mathtt M_j \}_{j \geq 0}$ is 
    \begin{align*}
        T_{\text{tree}} = \min \{ t \geq 1 : \mathtt n(\mathtt M_j) = \mathtt n(\mathtt M_t)\mbox{ for }j \geq t\} .
    \end{align*} From Definition \ref{def:tree} and Proposition \ref{prop-equivalence-of-marks-and-chl},  we note that $\omega_{N,\lambda}=t_{T_{\text{tree}}}$. 
    We recall that $\mathtt M_j =[(I_1 , I_2 , \ldots , I_k), (1 , \ldots , k)]$, where $k=\mathtt n(\mathtt M_j)$, and $\mathtt M_j(0)=I_0=\overline{\mathbb T_1 \setminus (\bigcup_{l=1}^{k} I_l)}$, is a union of disjoint closed intervals $J_1 , \ldots, J_{{\ell_j}}$ and we denote
    \begin{align*}
        \mathtt M_j(0):=\bigsqcup_{m=1}^{{\ell_j}} J_m.
    \end{align*}
    So $|\mathtt M_j(0)|=\sum_{m=1}^{{\ell_j}}|J_m|.$ Now we analyze $|\mathtt M_j(0)|$ to describe the total length of non-tree intervals. 
    For any interval $J \subset \mathbb T_1$, we define $\hat{S}_x^{\mathtt{inv}}(J)$ as 
\begin{align*}
     \hat{S}_x^{\mathtt{inv}}(J) =
    \begin{cases}
     \overline{S_x^{\mathtt{inv}}(J \setminus \{x \})},\quad  &  x \in J\,,  \\
    S_x^{\mathtt{inv}}(J), & x \notin J\,,
    \end{cases}
\end{align*}
where this specific definition is inspired {by} the fact that the interval created by a newly grown slit on $\mathbb T_1$ is no longer colored by $0$ under our setting of {marked} configurations. {Analogously to (\ref{formula of Mark configuration sequence}), we obtain} $\{ \mathtt M_j(0) \}_{j \geq 0}$ recursively as follows.
\begin{enumerate}
   \item $\mathtt M_0(0) = \mathbb T_1$;
    \item $\mathtt M_{j+1}(0) = \hat{S}_{x_{j+1}}^{\mathtt{inv}} (\mathtt M_{j}(0))$, where for $\mathtt M_{j}(0) = \bigsqcup_{m=1}^{{\ell_j}} J_m$ we have (denote $a_\delta := 2\arctan{\tfrac{\delta}{\sqrt{1-\delta^2}}}$)
     \begin{align}
     &\quad\,\,\hat{S}_{{x_{j+1}}}^{\mathtt{inv}} (\mathtt M_{j}(0)) =\bigsqcup_{m=1}^{{\ell_j}}\hat{S}_{{x_{j+1}}}^{\mathtt{inv}}(J_m)\nonumber \\
     &=\begin{cases}
     \Big(\bigsqcup_{\substack{1\le m\le {\ell_j} \\  m\neq l}}S_{x_{j+1}}^{\mathtt{inv}}(J_m)\Big)\bigsqcup [S_{x_{j+1}}^{\mathtt{inv}}(a), x_{j+1}-a_\delta ]\bigsqcup [x_{j+1}+a_\delta,S_{x_{j+1}}^{\mathtt{inv}}(b)] \,, \\
    \hfill \exists 1\le l\le {\ell_j},  \ x_{j+1} \in {J_l}:=[a,b]\,,  \\
    \bigsqcup_{m=1}^{{\ell_j}} S_{x_{j+1}}^{\mathtt{inv}}(J_m),  \qquad\qquad\qquad\qquad\quad \mbox{otherwise.}
    \end{cases}
    \label{formula of Mark configuration sequence (0)}
    \end{align}
    by our definitions of $S_{x}^{\mathtt{inv}}(A)$ and $\hat{S}_{x}^{\mathtt{inv}}(A)$ for $A \subset \mathbb T_1$ and $x \in \mathbb T_1$.
\end{enumerate} 
Now we analyze the variation of each $J_m$ when a slit {attaches} to $\mathbb T_1$. 
\begin{lemma}\label{lem: recursion of inverse of tilde-S}
    For any interval $J \subset \mathbb T_1$, we obtain the expectation of $|\hat{S}_x^{\mathtt{inv}}(J)|$ as follows.
    \begin{align*}
        \mathbb{E}\big[|\hat{S}_x^{\mathtt{inv}}(J)| \big| J\big]=\left(1-\frac{2\arctan\frac{\delta}{\sqrt{1-\delta^2}}}{\pi}\right) |J|.
    \end{align*}
\end{lemma}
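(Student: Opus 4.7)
The plan is to reduce the claim to the martingale property already established in Proposition~\ref{geometric properties of CHL}(1), after a one-line set-theoretic bookkeeping that relates $\hat{S}_x^{\mathtt{inv}}(J)$ and $S_x^{\mathtt{inv}}(J)$. The key observation is that the two operations differ only by the preimage of the newly grown slit, which is the real interval $[x-a_\delta, x+a_\delta]$ of length $2a_\delta = 4\arctan\frac{\delta}{\sqrt{1-\delta^2}}$ by \eqref{eq-[x-2arc,x+2arc]}, and only in the case $x \in J$.

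First, I would spell out the pointwise identity. When $x \notin J$ the two definitions agree by construction. When $x \in J$, the set $S_x^{\mathtt{inv}}(J) = S_x^{-1}(J \cup [x, x+i\lambda])$ contains $S_x^{-1}([x, x+i\lambda]) = [x-a_\delta, x+a_\delta]$, while $\hat{S}_x^{\mathtt{inv}}(J) = \overline{S_x^{\mathtt{inv}}(J \setminus \{x\})} = \overline{S_x^{-1}(J \setminus \{x\})}$ is precisely $S_x^{\mathtt{inv}}(J)$ with the open slit-preimage $(x-a_\delta, x+a_\delta)$ removed. Since these sets overlap only on a set of measure zero, one obtains the Lebesgue-measure identity
\begin{equation*}
|S_x^{\mathtt{inv}}(J)| \;=\; |\hat S_x^{\mathtt{inv}}(J)| \;+\; 2a_\delta \,\mathbf{1}_{\{x \in J\}}
\end{equation*}
valid for every realization of $x \in \mathbb{T}_1$.

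Next, I would take expectation over $x \sim \mathsf{Unif}(\mathbb{T}_1)$. Proposition~\ref{geometric properties of CHL}(1) (applied to the embedded chain started at $J$) gives $\mathbb{E}[|S_x^{\mathtt{inv}}(J)| \mid J] = |J|$, while $\mathbb{P}(x \in J) = |J|/(2\pi)$ by uniformity. Substituting and rearranging,
\begin{equation*}
\mathbb{E}\big[|\hat S_x^{\mathtt{inv}}(J)| \,\big|\, J\big] \;=\; |J| \;-\; 2a_\delta \cdot \tfrac{|J|}{2\pi} \;=\; \Big(1 - \tfrac{a_\delta}{\pi}\Big)|J|,
\end{equation*}
which, upon substituting $a_\delta = 2\arctan\frac{\delta}{\sqrt{1-\delta^2}}$, yields the claimed formula. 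There is no real obstacle: the only thing to check carefully is the set-level computation in the $x \in J$ case, in particular that removing $\{x\}$ from $J$ before applying $S_x^{-1}$ exactly excises the length-$2a_\delta$ slit preimage, which follows from \eqref{eq-[x-2arc,x+2arc]} together with $S_x$ being a bijection off $\{x - a_\delta, x, x+a_\delta\}$.
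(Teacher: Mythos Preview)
Your proof is correct and takes a genuinely different route from the paper. The paper fixes $J=[-a,a]$ and computes $\frac{1}{\pi}\int_0^\pi |\hat S_x^{\mathtt{inv}}([-a,a])|\,dx$ directly from the explicit formula \eqref{eq-S0-1,delta-inverse-explicit}, splitting the range of $x$ into four subintervals according to the relative position of $x$, $a$, and $\pi-a$; the pieces then telescope to $\tfrac{2a}{\pi}(\pi-2\arctan\tfrac{\delta}{\sqrt{1-\delta^2}})$. Your argument instead isolates the pointwise measure identity $|S_x^{\mathtt{inv}}(J)|=|\hat S_x^{\mathtt{inv}}(J)|+2a_\delta\,\mathbf 1_{\{x\in J\}}$ and then invokes the martingale property of Proposition~\ref{geometric properties of CHL}(1). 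This is shorter and more transparent: it makes explicit that the contraction factor arises as $\P(x\in J)$ times the fixed length $2a_\delta$ of the slit preimage, with no case analysis needed. The paper's computation, by contrast, is self-contained and does not rely on the earlier proposition, but hides this structure inside the integral.
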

\begin{proof}
     Assume that $J=[-a,a]$. Note that the slit position $x$ {is uniformly distributed} on $\mathbb{T}_1$. For $0<a\le \frac{\pi}{2}$, with (\ref{eq-S0-1,delta-inverse-explicit}) we have that
    \begin{align*}
        \mathbb{E}[|\hat{S}_x^{\mathtt{inv}}(J)| | J=[-a,a]]&=\frac{1}{\pi}\int_{0}^{\pi} |\hat{S}_x^{\mathtt{inv}}([-a, a])| dx\\
        &=\frac{1}{\pi}\int_0^a\bigl(S_0^{\mathtt{inv}}(\{a+x\})-2\arctan{\tfrac{\delta}{\sqrt{1-\delta^2}}}\bigr)\,dx\\
        &+\frac{1}{\pi}\int_0^a\bigl(S_0^{\mathtt{inv}}(\{a-x\})-2\arctan{\tfrac{\delta}{\sqrt{1-\delta^2}}}\bigr)\,dx\\
        &+\frac{1}{\pi}\int_a^{\pi-a}\bigl(S_0^{\mathtt{inv}}(\{a+x\})-S_0^{\mathtt{inv}}(\{a-x\})\bigr)\,dx\\
        &+\frac{1}{\pi}\int_{\pi-a}^{\pi}\bigl(\pi-S_0^{\mathtt{inv}}(\{x-a\})+\pi-S_0^{\mathtt{inv}}(\{2\pi-x-a\})\bigr)\,dx\\
        &=\frac{2a}{\pi}\bigl(\pi-2\arctan{\tfrac{\delta}{\sqrt{1-\delta^2}}}\bigr).
    \end{align*}  
{The case $\frac{\pi}{2}<a\le \pi$ is similar.}  Therefore, we get the argument.
\end{proof}

{Combining} Lemma \ref{lem: recursion of inverse of tilde-S} with (\ref{formula of Mark configuration sequence (0)}), we have that
\begin{align*}
    \mathbb{E}[|\mathtt M_{j+1}(0)|||\mathtt M_j(0)|]=\left(1-\frac{2\arctan\frac{\delta}{\sqrt{1-\delta^2}}}{\pi}\right)|\mathtt M_j(0)|, 
\end{align*}
which implies that
\begin{align*}
    \mathbb{E}[|\mathtt M_k(0)|]=2\pi \left(1-\frac{2\arctan\frac{\delta}{\sqrt{1-\delta^2}}}{\pi}\right)^k.
\end{align*}
For any $\varepsilon>0$, {set $m_N:=\left\lceil(1+\varepsilon)\frac{\pi}{\lambda}N\log N\right\rceil$. Since a new color is created at step $k+1$ precisely when $x_{k+1}\in\mathtt M_k(0)$,} we have
\begin{align*}
    {\mathbb{P}\left(T_{\text{tree}}\ge m_N\right)}
    &{\le\sum_{k=m_N-1}^{\infty}\mathbb{P}(x_{k+1}\in \mathtt M_k(0))
    =\sum_{k=m_N-1}^{\infty}\frac{\mathbb{E}[|\mathtt M_k(0)|]}{2\pi}}\\
    &{\le\sum_{k=m_N-1}^{\infty} \left(1-\frac{2\arctan\frac{\delta}{\sqrt{1-\delta^2}}}{\pi}\right)^k}
    \le {C'_6 N^{-\varepsilon}}.
\end{align*}
For sufficiently large $N$, 
\begin{align*}
    \mathbb{E}[T_{\text{tree}}]\le (1+0.1\varepsilon)\frac{\pi}{\lambda}N\log N+\sum_{k=1}^{\infty}(1+0.1k\varepsilon)\frac{\pi}{\lambda}N^{1-0.1k\varepsilon}\log N\le (1+\varepsilon)\frac{\pi}{\lambda}N\log N.
\end{align*}
{Since the total arrival rate in both the original cylinder and the rescaled cylinder is $2\pi N$, \eqref{eq:t_T and T} and $\omega_{N,\lambda}=t_{T_{\text{tree}}}$ give}
    \begin{align*}
        {\E[\omega_{N, \lambda}]=\E[t_{T_{\text{tree}}}]=\frac{\E[T_{\text{tree}}]}{2\pi N}\le (1+\varepsilon)\frac{\log N}{2\lambda}.}
    \end{align*}
    {This proves the upper bound of $\E[\omega_{N, \lambda}]$.}

\end{proof}


\section{{The expected number of trees}}\label{sec:Expected number of trees}
{This section proves Theorem \ref{thm: number of trees} by using the backward $\operatorname{CHL}_N$  process. The resulting linear recursion yields \(\E[\mathcal{N}_{\infty}]\), and the same additivity argument gives \(\E[D_\infty]=1\).}

\begin{proof}[Proof of Theorem \ref{thm: number of trees}]
    
{Recall} that $\mathcal{N}_{t}$ is the number of trees at time $t$ in {the} standard $\operatorname{CHL}_N$ process. Note that $\mathcal{N}_{t}$ increases as $t$ increases, so there exists $\mathcal{N}_{\infty}$ such that
    \begin{align*}
        \mathcal{N}_{t} \to \mathcal{N}_{\infty} \quad \mathrm{a.s.} .
    \end{align*}
By {the} Monotone Convergence Theorem, 
    \begin{align*}
        \mathbb{E}[\mathcal{N}_{t}] \to \mathbb{E}[\mathcal{N}_{\infty}].
    \end{align*}
Now we compute $\mathbb{E}[\mathcal{N}_{\infty}]$ by {a recursion for conditional expectations} and use the backward $\operatorname{CHL}_N$ process  $\tilde{\chl}^{1,\lambda/N}_t(z)$, which is equal in distribution to {the} $\text{CHL}_N$ process at any fixed time. Under $\tilde{\chl}^{1,\lambda/N}_t(z)$,  {if a slit is grown at} $x \in \mathbb T_{1}$ at some time $t$, then all trees grown on the interval $[x-2\arctan\frac{\delta}{\sqrt{1-\delta^2}}, x+2\arctan\frac{\delta}{\sqrt{1-\delta^2}}]$ will attach to the slit ({see} Figure \ref{fig:num_of_trees}). 
\begin{figure}
    \centering

\tikzset{every picture/.style={line width=0.75pt}} 

\begin{tikzpicture}[x=0.75pt,y=0.75pt,yscale=-0.7,xscale=0.7]

\draw    (32,210) -- (292.5,210) ;
\draw [color={rgb, 255:red, 208; green, 2; blue, 27 }  ,draw opacity=1 ][line width=1.5]    (141,210) -- (169,210) ;
\draw [color={rgb, 255:red, 74; green, 144; blue, 226 }  ,draw opacity=1 ]   (41,185) -- (66,210) ;
\draw [color={rgb, 255:red, 74; green, 144; blue, 226 }  ,draw opacity=1 ]   (53.5,197.5) -- (85,162) ;
\draw [color={rgb, 255:red, 74; green, 144; blue, 226 }  ,draw opacity=1 ]   (44.25,154.75) -- (69.25,179.75) ;
\draw [color={rgb, 255:red, 74; green, 144; blue, 226 }  ,draw opacity=1 ]   (57,118) -- (78,170) ;
\draw [color={rgb, 255:red, 74; green, 144; blue, 226 }  ,draw opacity=1 ]   (287,186) -- (262,211) ;
\draw [color={rgb, 255:red, 74; green, 144; blue, 226 }  ,draw opacity=1 ]   (274.5,198.5) -- (243,163) ;
\draw [color={rgb, 255:red, 74; green, 144; blue, 226 }  ,draw opacity=1 ]   (263,164) -- (279.75,192.75) ;
\draw [color={rgb, 255:red, 74; green, 144; blue, 226 }  ,draw opacity=1 ]   (289,139) -- (271.38,178.38) ;
\draw [color={rgb, 255:red, 74; green, 144; blue, 226 }  ,draw opacity=1 ]   (182,185) -- (157,210) ;
\draw [color={rgb, 255:red, 74; green, 144; blue, 226 }  ,draw opacity=1 ]   (175.19,157.69) -- (143.69,122.19) ;
\draw [color={rgb, 255:red, 74; green, 144; blue, 226 }  ,draw opacity=1 ]   (158,163) -- (174.75,191.75) ;
\draw [color={rgb, 255:red, 74; green, 144; blue, 226 }  ,draw opacity=1 ]   (184,138) -- (166.38,177.38) ;
\draw [color={rgb, 255:red, 74; green, 144; blue, 226 }  ,draw opacity=1 ]   (117.69,184) -- (142.69,209) ;
\draw [color={rgb, 255:red, 74; green, 144; blue, 226 }  ,draw opacity=1 ]   (123.16,168.19) -- (133.66,158.5) ;
\draw [color={rgb, 255:red, 74; green, 144; blue, 226 }  ,draw opacity=1 ]   (141.69,162) -- (124.94,190.75) ;
\draw [color={rgb, 255:red, 74; green, 144; blue, 226 }  ,draw opacity=1 ]   (113,160) -- (133.31,176.38) ;
\draw    (327,210) -- (587.5,210) ;
\draw [color={rgb, 255:red, 208; green, 2; blue, 27 }  ,draw opacity=1 ][line width=1.5]    (450,210) -- (450,186) ;
\draw [color={rgb, 255:red, 74; green, 144; blue, 226 }  ,draw opacity=1 ]   (333.67,182.25) -- (355.54,210.03) ;
\draw [color={rgb, 255:red, 74; green, 144; blue, 226 }  ,draw opacity=1 ]   (344.61,196.14) -- (380.08,164.62) ;
\draw [color={rgb, 255:red, 74; green, 144; blue, 226 }  ,draw opacity=1 ]   (340.48,152.6) -- (362.34,180.38) ;
\draw [color={rgb, 255:red, 74; green, 144; blue, 226 }  ,draw opacity=1 ]   (357.48,117.61) -- (372.19,171.73) ;
\draw [color={rgb, 255:red, 74; green, 144; blue, 226 }  ,draw opacity=1 ]   (584.12,176.53) -- (572.38,209.87) ;
\draw [color={rgb, 255:red, 74; green, 144; blue, 226 }  ,draw opacity=1 ]   (578.25,193.2) -- (534.5,174.8) ;
\draw [color={rgb, 255:red, 74; green, 144; blue, 226 }  ,draw opacity=1 ]   (552.97,167.06) -- (580.5,185.75) ;
\draw [color={rgb, 255:red, 74; green, 144; blue, 226 }  ,draw opacity=1 ]   (565.62,133.28) -- (566.73,176.4) ;
\draw [color={rgb, 255:red, 74; green, 144; blue, 226 }  ,draw opacity=1 ]   (484.25,187.94) -- (450.39,198.1) ;
\draw [color={rgb, 255:red, 74; green, 144; blue, 226 }  ,draw opacity=1 ]   (491.2,160.66) -- (480.29,114.47) ;
\draw [color={rgb, 255:red, 74; green, 144; blue, 226 }  ,draw opacity=1 ]   (473.55,157.19) -- (474.67,190.45) ;
\draw [color={rgb, 255:red, 74; green, 144; blue, 226 }  ,draw opacity=1 ]   (508.29,147.5) -- (474.11,173.82) ;
\draw [color={rgb, 255:red, 74; green, 144; blue, 226 }  ,draw opacity=1 ]   (419.97,188.13) -- (449.67,207.31) ;
\draw [color={rgb, 255:red, 74; green, 144; blue, 226 }  ,draw opacity=1 ]   (421.99,171.52) -- (430.21,159.84) ;
\draw [color={rgb, 255:red, 74; green, 144; blue, 226 }  ,draw opacity=1 ]   (438.8,161.57) -- (428.48,193.2) ;
\draw [color={rgb, 255:red, 74; green, 144; blue, 226 }  ,draw opacity=1 ]   (410.33,165.66) -- (433.64,177.39) ;
\draw    (294,180) -- (326,180) ;
\draw [shift={(328,180)}, rotate = 180] [color={rgb, 255:red, 0; green, 0; blue, 0 }  ][line width=0.75]    (10.93,-3.29) .. controls (6.95,-1.4) and (3.31,-0.3) .. (0,0) .. controls (3.31,0.3) and (6.95,1.4) .. (10.93,3.29)   ;

\end{tikzpicture}

    \caption{Trees after a slit is attached in the backward process $\tilde{\chl}^{1,\lambda/N}_t(z)$ \label{fig:num_of_trees}}
    \label{fig:enter-label}
\end{figure}
{Denote by $X_k$ the number of trees after $k$ particles have grown on $\mathbb T_1$, and let $B_i$ $(1\le i\le X_k)$ be the event that the $i$-th existing tree is not covered by the next backward slit. With this observable,}
\begin{align*}
    {X_{k+1}=1+\sum_{j=1}^{X_k}\mathbf{1}_{B_j}.}
\end{align*}
{Let $\mathcal G_k$ be the sigma-field generated by the current backward configuration. Conditional on $\mathcal G_k$, the new slit position is uniform on $\mathbb T_1$. For each existing tree, the set of slit positions that cover it has length $4\arctan\frac{\delta}{\sqrt{1-\delta^2}}$, independently of the tree. Therefore}
\begin{align*}
    {\mathbb{E}[X_{k+1} | \mathcal G_k]= 1+ \sum_{j=1}^{X_k}\mathbb{P}(B_j\mid \mathcal G_k)=1+\left(1-\frac{2}{\pi}\arctan\frac{\delta}{\sqrt{1-\delta^2}}\right)X_k,}
\end{align*}
{where no independence among the events $B_j$ is used. Taking expectations gives}
\begin{align}\label{recursion of number of trees}
    \mathbb{E}[X_{k+1} ] =1+\left(1-\frac{2}{\pi}\arctan\frac{\delta}{\sqrt{1-\delta^2}}\right) \mathbb{E}[X_k].
\end{align}
Note that
\begin{align*}
    \lim_{k\to \infty}\mathbb{E}[X_k]=\mathbb{E}[\mathcal{N}_{\infty}].
\end{align*}
{Combining this with \eqref{recursion of number of trees},} we have 
    \begin{align*}
        \mathbb{E}[\mathcal{N}_{\infty}]= \frac{\pi}{2 \arctan\frac{\delta}{\sqrt{1-\delta^2}}}.
    \end{align*}
    With \eqref{eq-def-delta}, 
    \begin{align*}
        \lim_{N\to \infty}\frac{\mathbb{E}[\mathcal{N}_{\infty}] }{N }=\frac{\pi}{\lambda}.
    \end{align*}
\end{proof}
{We next prove Corollary \ref{cor:degree of a single particle}, which gives the asymptotics of the average degree of a single
 particle.}

\begin{proof}[Proof of Corollary \ref{cor:degree of a single particle}]{Recall that $D_t$ is the number of particles directly attached to the chosen particle by time $t$. Since $D_t$ is nondecreasing in $t$, there exists $D_{\infty}$ such that}
    \begin{align*}
        {D_{t} \to D_{\infty} \quad \mathrm{a.s.} .}
    \end{align*}
{By the Monotone Convergence Theorem,}
    \begin{align*}
        {\mathbb{E}[D_{t}] \to \mathbb{E}[D_{\infty}].}
    \end{align*}
{Define $T(x)$ as the expected number of direct attachments to a single marked interval $I \subset \mathbb{T}_1$ when $|I|=x$. Then by \eqref{eq-[x-2arc,x+2arc]} we have $\mathbb{E}[D_{\infty}]=T(4\arctan{\tfrac{\delta}{\sqrt{1-\delta^2}}})$. Note that $T(x)$ is nonnegative and $T(0) = 0$. By Theorem \ref{thm: number of trees},} 
\begin{align*}
    T(2\pi)=\mathbb{E}[\mathcal{N}_{\infty}]=\frac{\pi}{2 \arctan\frac{\delta}{\sqrt{1-\delta^2}}}.
\end{align*}
Moreover, by additivity of expectation, dividing the interval $I'$ with length $x+y$ into two intervals with lengths $x,y$ yields $T(x+y)=T(x)+T(y)$ for $x,y \geq 0$ such that $x + y \leq 2\pi$. So $T(x)$ is linear, which implies that
\begin{align*}
    T(x)=\frac{x}{4 \arctan\frac{\delta}{\sqrt{1-\delta^2}}}.
\end{align*}
{Therefore $\mathbb{E}[D_{\infty}]=T(4\arctan{\tfrac{\delta}{\sqrt{1-\delta^2}}})=1$, which completes our proof.}
\end{proof}

\section*{Acknowledgements}
{We thank Xinyi Li for fruitful discussions and a careful reading of an earlier version of the manuscript, and thank Yuan Zhang for suggesting the problem of the tree completion time. Eviatar B. Procaccia is supported by DFG grant 5010383. Yuxuan Zong is supported by National Key R\&D Program of China (No.\ 2021YFA1002700 and No.\ 2020YFA0712900) and thanks BICMR in Peking University for a visiting position in 2024/25 during which time this work was completed.}

\appendix
\section{Chernoff Bound}
\begin{lemma}\label{lem:Chernoff Bound}
    Let \( X_1, X_2, \ldots, X_n \) be i.i.d. non-negative random variables satisfying:
\begin{align*}
    \mathbb{P}(X_1 > x) \leq e^{-\frac{x}{L}}, \quad \forall x > 0,
\end{align*}
where $L$ is a parameter. Denote $S_k=X_1+\cdots+X_k$, then
\begin{align*}
    P(S_k > x) \leq \begin{cases}
1, & x \leq k L, \\
\left( \dfrac{x e}{k L} \right)^k e^{-\frac{x}{L}}, & x > k L.
\end{cases}
\end{align*}

\end{lemma}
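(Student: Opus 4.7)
The plan is a standard exponential Markov (Chernoff) argument. For $x\le kaN^3$ there is nothing to prove since probabilities are bounded by $1$, so assume $x>kaN^3$. For any $t\in(0, 1/(aN^3))$, Markov's inequality applied to $e^{tS_k}$ and independence give
\begin{equation*}
\mathbb{P}(S_k>x)\le e^{-tx}\bigl(\mathbb{E}[e^{tX_1}]\bigr)^k.
\end{equation*}

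First I would bound the MGF of $X_1$ using only the stated tail bound. Writing $\mathbb{E}[e^{tX_1}]=1+\int_0^\infty t e^{tu}\mathbb{P}(X_1>u)\,du$ and inserting $\mathbb{P}(X_1>u)\le e^{-u/(aN^3)}$, the integral is elementary and yields
\begin{equation*}
\mathbb{E}[e^{tX_1}]\le 1+\frac{t}{1/(aN^3)-t}=\frac{1}{1-taN^3}
\end{equation*}
for all $t\in(0,1/(aN^3))$. (Equivalently, one observes that $X_1$ is stochastically dominated by $\mathsf{Exp}(1/(aN^3))$ and uses the MGF of that exponential; either route is routine.)

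Substituting this bound and reparametrizing $\beta:=taN^3\in(0,1)$ gives
\begin{equation*}
\mathbb{P}(S_k>x)\le \exp\!\bigl(-\beta x/(aN^3)\bigr)\,(1-\beta)^{-k}.
\end{equation*}
Optimizing over $\beta$ by differentiating $-\beta x/(aN^3)-k\log(1-\beta)$ leads to the critical point $1-\beta=kaN^3/x$, which lies in $(0,1)$ exactly because we assumed $x>kaN^3$. Plugging this choice back in, the exponential factor becomes $e^{-x/(aN^3)+k}$ and the polynomial factor becomes $(x/(kaN^3))^k$, giving
\begin{equation*}
\mathbb{P}(S_k>x)\le \Bigl(\frac{xe}{kaN^3}\Bigr)^k e^{-x/(aN^3)},
\end{equation*}
as claimed. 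No step is really an obstacle here; the only care needed is to verify that the optimizing $\beta$ lies in the admissible range $(0,1)$, which is precisely the condition $x>kaN^3$ distinguishing the two cases in the statement.
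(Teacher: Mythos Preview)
Your proof is correct and follows the same Chernoff/exponential-Markov strategy as the paper: bound the MGF of $X_1$ via the tail estimate, raise to the $k$th power, and optimize over the tilting parameter. Your derivation of the MGF bound $\mathbb{E}[e^{tX_1}]\le 1/(1-taN^3)$ is in fact more careful than the paper's (the paper writes $\mathbb{E}[e^{\lambda X_i}] = \int_0^\infty e^{\lambda t}\,\mathbb{P}(X_i\ge t)\,dt$, which is missing the factor $\lambda$ and the $+1$), and it is precisely this sharper bound that yields the stated inequality after optimization.
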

\begin{proof}
    The MGF of \( X_i \) for \( \lambda < \frac{1}{L} \) satisfies:
\begin{align*}
    \mathbb{E}[e^{\lambda X_i}] =1+ \int_0^\infty \lambda e^{\lambda t} P(X_i \geq t) \, dt \leq 1+\lambda \int_0^\infty e^{\lambda t} e^{-t / L} \, dt= \frac{1}{1 - \lambda L}.
\end{align*}
Since the \( X_i \) are i.i.d., the MGF of $S_k$ is:
\begin{align*}
    M_{S_k}(\lambda) = \left( \mathbb{E}[e^{\lambda X_1}] \right)^k \leq \left( \frac{1}{1 - \lambda L} \right)^k.
\end{align*}
The Chernoff bound yields:
\begin{align*}
P(S_k > x) \leq \inf_{\lambda \in \bigl(0, \frac{1}{L}\bigr)} e^{-\lambda x} M_{S_k}(\lambda) \leq \inf_{\lambda \in \bigl(0, \frac{1}{L}\bigr)} e^{-\lambda x} \left( \frac{1}{1 - \lambda L} \right)^k.
\end{align*}
Let \( f(\lambda) = -\lambda x - k \ln \left( 1 - \lambda L \right) \). Taking {the} derivative and setting to zero:
\begin{align*}
    f'(\lambda) = -x + \frac{k L}{1 - \lambda L} = 0 \implies \lambda = \frac{1}{L} - \frac{k}{x}.
\end{align*}
This requires \( x > k L \) to ensure \( \lambda \in (0, 1/L) \).
Substituting the optimal \( \lambda \):
\begin{align*}
    P(S_k > x) \leq \exp\left( -\left( \frac{1}{L} - \frac{k}{x} \right) x + k \ln \left( \frac{x}{k L} \right) \right) = \left( \frac{x e}{k L} \right)^k e^{-\frac{x}{L}}.
\end{align*}
{This completes the proof.}
\end{proof}

\bibliographystyle{alpha}
\bibliography{bibliography}

\end{document}